\documentclass[a4j,12pt]{article}  
\usepackage{geometry}
\usepackage[T1]{fontenc} 
\usepackage{times} 
\usepackage{amsmath,amssymb,bm, mathpazo}
\usepackage{amsfonts}
\usepackage{pifont}
\usepackage{mathrsfs}
\usepackage{graphicx}
\usepackage{amscd}
%\usepackage{hyperref}

%%%%%%%%%%%%%%%%%%%%%%%%%%%%%
%  minimum  Mathematics     %
%%%%%%%%%%%%%%%%%%%%%%%%%%%%%

\makeatletter

\newtheorem{thm}{\theoremname}[section]
\newtheorem{meidai}[thm]{\propositionname}
\newtheorem{cor}[thm]{\corollaryname}

\newtheorem{lem}[thm]{\lemmaname}

\newtheorem{prob}{\problemname}
\newtheorem{exa}{\examplename}[section]
\newtheorem{rem}{\remarkname}[section]

\newcommand{\theoremname}{Theorem}
\newcommand{\definitionname}{Definition}
\newcommand{\lemmaname}{Lemma}
\newcommand{\corollaryname}{Corollary}
\newcommand{\axiomname}{Axiom}
\newcommand{\propositionname}{Proposition}
\newcommand{\problemname}{Question}
\newcommand{\examplename}{Example}
\newcommand{\remarkname}{Remark}

\def\tedsymbol{\vcenter{\hbox{\vrule\@height.5em\@width.5em}}}
\def\ted{{\unskip\nobreak\hfil\penalty50
 \quad\hbox{}\nobreak\hfil \hbox{$\tedsymbol$}
 \parfillskip\z@ \finalhyphendemerits\z@\par}}

%%% The below are for the 1styearthesis %%%

%%% The above are for the 1styearthesis %%%

   % inclusion map

%new commands

%% Logic 

 % [[
 % ]]
\makeatother

%%%%%% no debug %%%%%%%%
%%%%%%%%%%%%%%%%%%%%%%%%%%%%%%%%%%%%%%%%%%%%%%%%%%%%%%%%%%%%%%%%%%%%%%%%%%
%%%%%%%%%%%%%%%%%%%%%%%%%%%%%%%%%%%%%%%%%%%%%%%%%%%%%%%%%%%%%%%%%%%%%%%%%%
%%%%%%%%%%%%%%%%%%%%%%%%%%%%%%%%%%%%%%%%%%%%%%%%%%%%%%%%%%%%%%%%%%%%%%%%%%
%%%%%%%%%%%%%%%%%%%%%%%%%%%%%%%%%%%%%%%%%%%%%%%%%%%%%%%%%%%%%%%%%%%%%%%%%%
\title{On certain infinite families of imaginary quadratic fields whose Iwasawa $\lambda$-invariant is equal to $1$}
\author{
\LARGE{Akiko Ito}\footnote{Supported by Grant-in-Aid for JSPS Fellows (22-222) from Japan Society for the Promotion of Science.}}
\begin{document}
\date{\empty}
\maketitle
%%%%%%%%%%%%%%%%%%%%%%%%%%%%%%%%%%%%%%%%%%%%%%%%%
%%%%%%%%%%%%%%%%%%%%%%%%%%%%%%%%%%%%%%%%%%%%%%%%%
%\nocite{Harada-01,Katura-05,Nakano-03,Takagi-96,Artin-74,Kuga-68,MP-64,Infeld-96,Borceux-01,Serre-95,Iyanaga-99,Iyanaga-02}
%%%%%%%%%%%%%%%%%%%%%%%%%%%%%%%%%%%%%%%%%%%%%%%%%
%%%%%%%%%%%%%%%%%%%%%%%%%%%%%%%%%%%%%%%%%%%%%%%%%
{\small {\bf Abstract.}
Let $p$ be an odd prime number.
In this paper, we show existence of certain infinite families of imaginary quadratic fields in which $p$ splits and  
whose Iwasawa $\lambda$-invariant of the cyclotomic $\mathbb{Z}_p$-extension is equal to $1$.}\\
%%%%%%%%%%%%%%%%%%%%%%%%%%%%%%%%%%%%%%%%%%%%%%%%%
~\\
2010 Mathematical Subject Classification : 11R11, 11R23, 11R29.\\
Key words and phrases: imaginary quadratic field, Iwasawa $\lambda$-invariant.
%%%%%%%%%%%%%%%%%%%%%%%%%%%%%%%%%%%%%%%%%%%%%%%%%
%%%%%%%%%%%%%%%%%%%%%%%%%%%%%%%%%%%%%%%%%%%%%%%%%  
 \section{Introduction} \label{sec1}
Throughout this paper, $D$ will denote the fundamental discriminant of a quadratic field $\mathbb{Q}(\sqrt{D})$.
Let $\chi_D := \bigl(\frac{D}{\cdot}\bigr)$ be the Kronecker character.
For a prime number $p$, 
we denote by $\lambda_p(\mathbb{Q}(\sqrt{D}))$ the Iwasawa $\lambda$-invariant of the cyclotomic $\mathbb{Z}_p$-extension 
of $\mathbb{Q}(\sqrt{D})$.
If $p$ splits in an imaginary quadratic field $\mathbb{Q}(\sqrt{D})$, then it is known that $\lambda_p(\mathbb{Q}(\sqrt{D}))$ is greater than or equal to $1$.
We see how often imaginary quadratic fields with $\lambda_p = 1$ appear for a given prime number $p$.
First, we consider the following question.
%%%%%%%%%%%%%%%%%%%%%%%%%%%%%%%%%%%%%
\begin{prob} \label{q1}
Let $p$ be a prime number.
Is the set
%%%
\begin{eqnarray}
\left\{D:
\begin{array}{c|c}
fundamental \ discriminant \ of & {} \nonumber \\
imaginary \ quadratic \ field & {} \nonumber \\
\end{array}
\lambda_p(\mathbb{Q}(\sqrt{D})) = 1 \ and \ \chi_D(p) = 1 \right \}
\end{eqnarray}
%%%
infinite?
\end{prob}
%%%%%%%%%%%%%%%%%%%%
For $p=2$, Question \ref{q1} is solved affirmatively.  
In fact, we can prove this by using Kida's formula 
on the Iwasawa $\lambda$-invariants of the cyclotomic $\mathbb{Z}_2$-extensions of imaginary quadratic fields \cite{Kid79}.
%%%%%%%%%%%%%%%%%%%%

We treat the case where $p \ge 3$.
D.~S.~Dummit, D.~Ford, H.~Kisilevsky and J.~W.~Sands~\cite{DFKS91}, T.~Fukuda and H.~Taya~\cite{FT}, 
J.~S.~Kraft and L.~C.~Washington~\cite{KW07}, etc. constructed tables of the Iwasawa $\lambda$-invariants of the cyclotomic $\mathbb{Z}_p$-extensions of 
imaginary quadratic fields.
For $p = 3$, T.~Fukuda and H.~Taya~\cite[p. 302]{FT} showed the following table of $\lambda_3(\mathbb{Q}(\sqrt{-d}))$
for positive square-free integers $d$'s less than $10,000,000$.\\
%%%%%%%%%%%%%%%
~\\
\begin{tabular}{|c|c|c|c|c|c|c|}
\hline
$\lambda_3$ & 0 & 1 & 2 & 3 & 4 & 5 \\
\hline
&&\\[-14pt]\hline          
$d \equiv 0 \bmod 3$ & 890546 \ ($\ast$) & 409063 & 145360 & 49796 & 16750 & 5517 \\
\hline
$d \equiv 1 \bmod 3$ & 1327112 & 617243 & 220648 & 76138 & 25595 & 8666 \\
\hline
$d \equiv 2 \bmod 3$ & 0 & 1320935 & 618333 & 225217 & 76691 & 25554 \\
\hline
\end{tabular}\\
%%%
~\\
~\\
%%%
\begin{tabular}{|c|c|c|c|c|c|c|c|c|c|c|}
\hline
$\lambda_3$ & 6 & 7 & 8 & 9 & 10 & 11 & 12 & 13 & 14 & Total  \\
\hline
&&\\[-14pt]\hline          
$d \equiv 0 \bmod 3$ & 1864 & 613 & 218 & 50 & 24 & 8 & 1 & 1 & 2 & 1519813 \\
\hline
$d \equiv 1 \bmod 3$ & 2939 & 912 & 329 & 112 & 28 & 12 & 4 & 2 & 1 & 2279741 \\
\hline
$d \equiv 2 \bmod 3$ & 8622 & 2956 & 939 & 311 & 123 & 44 & 6 & 3 & 2 & 2279736 \\
\hline
\end{tabular}\\
%%%
~\\
~\\
The number in the case $(\ast)$ of this table denotes the number of positive square-free integers $d$ less than $10,000,000$ 
such that $d \equiv 0 \bmod 3$ and $\lambda_3(\mathbb{Q}(\sqrt{-d})) = 0$.
%%%%%%%%%%%%%%%%%%%%%%%%%%%%%%%%%%
It seems that the Iwasawa $\lambda$-invariants of the cyclotomic $\mathbb{Z}_3$-extensions 
of imaginary quadratic fields tend to be small.
%%%%%%%%%%%%%%%%%%%%%%%%%%%%%%%%%%  
  
Question \ref{q1} was studied in D.~Byeon~\cite{By05}.
Suppose $0 < X \in \mathbb{R}$. We denote by $S_-(X)$ the set of negative fundamental discriminants $-X < D < 0$ of quadratic fields.
Byeon proved the following theorem.
%%%%%%%%%%%%     
\begin{thm}[Byeon, {[5, Proposition 1.2]}] \label{thm3}
Let $p$ be an odd prime number.
Assume that there is a negative fundamental discriminant $D_0$ of a quadratic field which satisfies the following conditions:
\begin{equation} 
 \begin{split}
&{\rm (i)} \ \lambda_p(\mathbb{Q}(\sqrt{D_0})) = 1, \nonumber \\
&{\rm (ii)} \ \chi_{D_0}(p) = 1.
 \end{split}
\end{equation}                       
Then, for any sufficiently large $X \in \mathbb{R}$, we have
%%%
\begin{eqnarray}
\sharp \left\{
\begin{array}{c|c}
D \in S_-(X) & \lambda_p(\mathbb{Q}(\sqrt{D})) = 1 \ and \ \chi_{D}(p) = 1 \nonumber \\
\end{array}
\right \} \gg \frac{\sqrt{X}}{\log X}.
\end{eqnarray}
%%%
\end{thm}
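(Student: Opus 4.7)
The plan is to construct the required family as quadratic twists of $K_0 := \mathbb{Q}(\sqrt{D_0})$, using Chebotarev's density theorem for the counting and a Kida- or Gold--Sands-type argument to preserve the invariant under the twist. Write $p\mathcal{O}_{K_0} = \mathfrak{p}\bar{\mathfrak{p}}$, which is possible by hypothesis (ii), and let $X_\infty^{(0)}$ denote the Galois group of the maximal unramified abelian pro-$p$ extension of the cyclotomic $\mathbb{Z}_p$-tower $K_{0,\infty}/K_0$. Hypothesis (i) forces $X_\infty^{(0)}$ to have $\mathbb{Z}_p$-rank one, a rigid structural condition which I will try to propagate to other imaginary quadratic fields.

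First I would introduce an auxiliary Galois extension $L/\mathbb{Q}$ encoding simultaneously: the splitting of $p$, the congruence conditions modulo $4p|D_0|$ that control when $D := D_0\ell^*$ (with $\ell^* = \pm\ell$) is a fundamental discriminant, and the action of $\mathrm{Gal}(K_{0,\infty}/K_0)$ on a suitable finite-layer quotient of $X_\infty^{(0)}$. Then I would identify a nonempty conjugacy class $C \subset \mathrm{Gal}(L/\mathbb{Q})$ such that any prime $\ell$ whose Frobenius lies in $C$ satisfies: (a) $\ell \nmid 2pD_0$; (b) $D = D_0\ell^*$ is a fundamental discriminant with $\chi_D(p) = 1$; and (c) the local conditions at $\ell$ needed to transfer $\lambda_p = 1$ from $K_0$ to $\mathbb{Q}(\sqrt{D})$ hold.

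The heart of the argument, and the main obstacle, is (c). Since $\mathbb{Q}(\sqrt{D})/K_0$ is a quadratic extension and not a $p$-extension, Kida's classical formula does not apply directly. The most promising route is a Gold--Sands-type criterion, which characterizes $\lambda_p(\mathbb{Q}(\sqrt{D})) = 1$ by the non-triviality of a specific element associated to $\mathfrak{p}$ in the $p$-part of the ideal class group of a small cyclotomic layer, equivalently by the non-vanishing of a certain $p$-adic logarithm. The Frobenius condition on $\ell$ must be engineered precisely so that this non-vanishing criterion, known at $K_0$ by hypothesis (i), persists for the twisted field $\mathbb{Q}(\sqrt{D})$; this requires a careful analysis of how the twist affects the local invariants at $\mathfrak{p}$ and at the primes of $K_0$ above $\ell$.

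Once the transfer in (c) is in hand, the counting step is routine: by the effective form of Chebotarev's density theorem applied to $L/\mathbb{Q}$, the number of primes $\ell \le Y$ with Frobenius in $C$ is $\gg Y/\log Y$, and each such $\ell$ produces a distinct fundamental discriminant $D = D_0\ell^*$ with $|D| \ll \ell$. Taking $Y$ of order $\sqrt{X}$ already yields $\gg \sqrt{X}/\log X$ admissible discriminants in $S_-(X)$, giving the claimed lower bound. Thus the real work lies entirely in the Iwasawa-theoretic transfer step, not in the density estimate.
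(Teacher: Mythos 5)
Your plan defers the entire content of the theorem to your step (c), and that step has a genuine gap: the ``transfer'' of $\lambda_p=1$ from $K_0=\mathbb{Q}(\sqrt{D_0})$ to a quadratic twist $\mathbb{Q}(\sqrt{D_0\ell^*})$ cannot be encoded as a Frobenius condition on $\ell$ in any fixed finite Galois extension $L/\mathbb{Q}$. By the Gold--Sands criterion (Theorem \ref{thm1.1}), $\lambda_p(\mathbb{Q}(\sqrt{D_0\ell^*}))=1$ requires both that $p\nmid h(\mathbb{Q}(\sqrt{D_0\ell^*}))$ and that a generator $\xi$ of $\wp^{m_1}$ in the \emph{new} field satisfy $\xi^{p-1}\not\equiv 1\bmod\overline{\wp}^{2}$. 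Neither condition is local at $\ell$: the order $m_1$ of the class of $\wp$, the generator $\xi$, and the $p$-divisibility of the class number all depend on the global arithmetic of $\mathbb{Q}(\sqrt{D_0\ell^*})$ and vary uncontrollably as $\ell$ ranges over a Chebotarev class. Note also that $\mathbb{Q}(\sqrt{D_0\ell^*})$ is not an extension of $K_0$ at all, and for odd $p$ a quadratic twist is a prime-to-$p$ operation, so no Kida-type comparison of Iwasawa modules is available; the $\lambda_p$-invariants of the three quadratic subfields of $\mathbb{Q}(\sqrt{D_0},\sqrt{\ell^*})$ are essentially independent. A warning sign: since $|D_0\ell^*|\ll\ell$, your scheme, if it worked, would produce $\gg X/\log X$ such discriminants, a density far beyond anything known.

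The paper's actual argument (Byeon's, generalized in Theorem \ref{thm8}) avoids any transfer between fields. It uses Proposition \ref{prp2.1} to translate $\lambda_p(\mathbb{Q}(\sqrt{D}))=1$ into the single congruence $L(1-p,\chi_D)/p\not\equiv 0\bmod p$, realizes these values as Fourier coefficients $H(p,N)/p$ of Cohen's Eisenstein series of weight $p+\tfrac{1}{2}$, and then applies Sturm's theorem (Theorem \ref{thm2.3}): for each auxiliary prime $l$ inert in $\mathbb{Q}(\sqrt{D_0})$ and congruent to $1$ modulo a fixed modulus (these \emph{are} Chebotarev conditions, and this is the only place Chebotarev is used), if all coefficients $H(p,lN)/p$ with $N\le l\kappa$ in the prescribed residue classes vanished mod $p$, the form $(U_l\mid G)-3(V_l\mid G)$ would be $\equiv 0\bmod p$ by the Sturm bound, contradicting the explicit nonvanishing of its coefficient at $-D_0l^{3}$ --- which is exactly where hypothesis (i) enters. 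Each such $l$ therefore yields a discriminant $D_l$ with $|D_l|\le l^{2}\kappa$ and $\lambda_p(\mathbb{Q}(\sqrt{D_l}))=1$, a multiplicativity argument shows the $D_l$ repeat at most twice, and counting $l\le\sqrt{X/\kappa}$ gives $\gg\sqrt{X}/\log X$. If you want to salvage your outline, this modular-forms nonvanishing step is what must replace your step (c).
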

%%%%%%%%%%%%     
The assumption of the existence of $D_0$ is necessary for the proof.
In \cite{By05}, Byeon gave such $D_0$ for each odd prime number by using a result of R.~Gold~\cite[Theorem 4]{Go74}.
But it seems that he did not use Gold's result correctly
(he does not verify the indivisibility of the class number).
Therefore, we give such $D_0$ in the following way.
%%%%%%%%%%%%
\begin{thm} \label{thm4}
Let $p$ be an odd prime greater than $3$.
If $\lambda_p(\mathbb{Q}(\sqrt{1 - p}))$ is greater than $1$, then $\lambda_p(\mathbb{Q}(\sqrt{4 - p}))$ is equal to $1$.
\end{thm}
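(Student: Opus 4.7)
The plan is to apply Gold's criterion (\cite[Theorem~4]{Go74}) to both fields and reduce the problem to a single congruence on the Fermat quotient $q_p(2)$. First I would set $F_1 := \mathbb{Q}(\sqrt{1-p})$ and $F_2 := \mathbb{Q}(\sqrt{4-p})$. Both are imaginary quadratic (as $p>3$), and $p$ splits in each since $1-p \equiv 1$ and $4-p \equiv 4$ are squares mod $p$. The key structural observation is that the primes above $p$ are already principal: setting $\pi_1 := 1+\sqrt{1-p}$ and $\pi_2 := 2+\sqrt{4-p}$, the norm identities $N(\pi_1) = 1-(1-p) = p$ and $N(\pi_2) = 4-(4-p) = p$ give $\mathfrak{p}_i = (\pi_i)$ in the respective rings of integers.

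Next I would invoke Gold's criterion: for each $F_i$, subject to the side condition $p \nmid h_{F_i}$, one has $\lambda_p(F_i) = 1$ if and only if $\pi_i^{p-1} \not\equiv 1 \pmod{p^2}$ in $\mathbb{Z}_p$, viewed via the embedding in which $\pi_i$ is a $p$-adic unit. I would then compute both quantities mod $p^2$ explicitly using Hensel / the binomial expansion: in the relevant embeddings, $\sqrt{1-p} \equiv 1 - p/2$ and $\sqrt{4-p} \equiv 2 - p/4 \pmod{p^2}$, so $\pi_1 \equiv 2 - p/2$ and $\pi_2 \equiv 4 - p/4 \pmod{p^2}$. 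Introducing $q := q_p(2) = (2^{p-1}-1)/p \bmod p$ and using $2^{p-1} \equiv 1 + pq \pmod{p^2}$, short manipulations should yield
\[
\pi_1^{p-1} \equiv 1 + p\bigl(q + 1/4\bigr), \qquad \pi_2^{p-1} \equiv 1 + p\bigl(2q + 1/16\bigr) \pmod{p^2}.
\]

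With these formulas in hand, the hypothesis $\lambda_p(F_1) > 1$ translates into $q \equiv -1/4 \pmod p$, and the desired conclusion $\lambda_p(F_2)=1$ amounts to $2q + 1/16 \not\equiv 0 \pmod p$. Substituting the hypothesis, $2(-1/4) + 1/16 = -7/16$, which is nonzero mod $p$ unless $p \mid 28$, i.e.\ (since $p$ is odd and $>3$) unless $p = 7$. The exceptional case $p=7$ I would dispatch by direct computation: $q_7(2) = (2^6-1)/7 = 9 \equiv 2 \pmod 7$, whereas the hypothesis would require $q \equiv -1/4 \equiv 5 \pmod 7$; so the hypothesis is never satisfied at $p=7$ and the implication is vacuously true.

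The hardest part will be the class-number side condition in Gold's criterion: the ``if and only if'' in the form I stated requires $p \nmid h_{F_i}$, which is precisely the indivisibility the paper flags as overlooked in \cite{By05}. To make the above argument rigorous I would need either to verify $p \nmid h_{F_1}$ and $p \nmid h_{F_2}$ directly for these particular fields (perhaps using the principality of $\mathfrak{p}_i$ together with the hypothesis), or to appeal to a refined version of Gold's criterion valid without this restriction in the split-principal case. By contrast, the $p$-adic Hensel/binomial computations are entirely routine once the framework is in place, and the final numerical coincidence that restricts the obstruction to the single prime $p=7$ is the decisive input.
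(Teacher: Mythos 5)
Your strategy coincides with the paper's own: both rest on the Gold--Sands criterion (Sands, quoted as Theorem 2.1 in Section 2 of the paper), reduce the two $\lambda$-conditions to congruences on powers of $2$ modulo $p^2$, find that the only possible obstruction is $p=7$, and dispose of $p=7$ by checking that the hypothesis fails there. Your Fermat-quotient bookkeeping is correct and is a cleaner packaging of the paper's binomial-sum computation: the paper's criterion $2^{2p-1}-2+p\equiv 0 \bmod p^2$ for $\lambda_p(\mathbb{Q}(\sqrt{1-p}))>1$ is exactly your $q\equiv -1/4 \bmod p$, its condition $2^{4p-1}-2^3+p\not\equiv 0\bmod p^2$ is your $q\not\equiv -1/32 \bmod p$, and your residue $-7/16$ matches the paper's $-7p \bmod p^2$. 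The $p=7$ check also agrees ($2^{13}+5=8197\not\equiv 0 \bmod 49$ in the paper, $q_7(2)=2\neq 5$ in your version).

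The genuine gap is the one you flagged but did not close: the indivisibility $p\nmid h(\mathbb{Q}(\sqrt{1-p}))$ and $p\nmid h(\mathbb{Q}(\sqrt{4-p}))$. This cannot be circumvented. Sands' criterion states that $\lambda_p(K)>1$ if and only if $\xi^{p-1}\equiv 1 \bmod \overline{\wp}^2$ \emph{or} $p\mid h(K)$; so to convert the hypothesis $\lambda_p(F_1)>1$ into the congruence $q\equiv-1/4$ you must first exclude $p\mid h(F_1)$, and to conclude $\lambda_p(F_2)=1$ from $q\not\equiv-1/32$ you must additionally establish $p\nmid h(F_2)$. Neither of your proposed escapes is viable: principality of $\mathfrak{p}_i$ carries no information about $p\nmid h$, and there is no refinement of the criterion that drops the class-number condition --- the paper's Remark after Theorem 2.1 emphasizes that Byeon's argument is incomplete precisely because he skipped this verification. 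The paper closes the gap with its Lemmas 2.2 and 2.3: since both fields have fundamental discriminant of absolute value less than $4p$, the class number formula combined with the bound $L(1,\chi_D)\le \tfrac12\log|D|+\log\log|D|+2.8$ gives $h<p$ outright once $p\ge e^7$, and the finitely many primes $3<p<e^7$ are checked by computer. An explicit bound of this kind, exploiting the smallness of the discriminant relative to $p$, is the decisive missing ingredient in your proof.
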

%%%%%%%%%%%%%%%%%%%%%%%%%%%%%%%%%%%%%%%%%%%%%%%%%%%%%%%%%%%%     
\begin{exa} \label{exa5}
$(1)$ \ When $p = 13$, we have 
$$\lambda_{13}(\mathbb{Q}(\sqrt{1 - 13})) = \lambda_{13}(\mathbb{Q}(\sqrt{- 3})) = 2 > 1$$
and 
$$\lambda_{13}(\mathbb{Q}(\sqrt{4 - 13})) = \lambda_{13}(\mathbb{Q}(\sqrt{- 1})) = 1.$$
$(2)$ \ When $p = 23$, we have 
$$\lambda_{23}(\mathbb{Q}(\sqrt{1 - 23})) = \lambda_{23}(\mathbb{Q}(\sqrt{- 22})) = 2 > 1$$
and 
$$\lambda_{23}(\mathbb{Q}(\sqrt{4 - 23})) = \lambda_{23}(\mathbb{Q}(\sqrt{- 19})) = 1.$$
\end{exa}
%%%%%%%%%%%
We calculated these examples by using Mizusawa's program \cite{Mi}.
%%%%%%%%%%%%%%%%%%%%%
Since the integers $1 - p$ and $4 - p$ are quadratic residues modulo $p$,
we can take $\mathbb{Q}(\sqrt{1 - p})$ or $\mathbb{Q}(\sqrt{4 - p})$ as $\mathbb{Q}(\sqrt{D_0})$ when $p \ge 5$.
When $p = 3$, we can take $\mathbb{Q}(\sqrt{-23})$ as $\mathbb{Q}(\sqrt{D_0})$.
From Theorems \ref{thm3} and \ref{thm4}, we obtain the following corollary.
%%%%%%%%%%%%%%%%%%%%%%%%%%%%%%%%%%%%%%%%%%%%%%%%%%%%%%%%%%%%
\begin{cor} \label{cor6}
Let $p$ be an odd prime number.
Then, for any sufficiently large $X \in \mathbb{R}$, we have
\begin{eqnarray}
\sharp \left\{
\begin{array}{c|c}
D \in S_-(X) & \lambda_p(\mathbb{Q}(\sqrt{D})) = 1 \ and \ \chi_{D}(p) = 1\nonumber \\
\end{array}
\right \} \gg \frac{\sqrt{X}}{\log X}.
\end{eqnarray}
\end{cor}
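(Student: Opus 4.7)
The plan is to obtain Corollary \ref{cor6} as a direct consequence of Theorems \ref{thm3} and \ref{thm4}. Theorem \ref{thm3} already supplies the quantitative lower bound $\gg \sqrt{X}/\log X$, provided we can exhibit a single seed discriminant $D_0$ satisfying $\lambda_p(\mathbb{Q}(\sqrt{D_0})) = 1$ and $\chi_{D_0}(p) = 1$. The entire task therefore reduces to producing such a $D_0$ for every odd prime $p$, after which the bound in Corollary \ref{cor6} is immediate from Theorem \ref{thm3}.

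For $p \geq 5$, I would proceed by a dichotomy based on Theorem \ref{thm4}. Consider the two imaginary quadratic fields $\mathbb{Q}(\sqrt{1-p})$ and $\mathbb{Q}(\sqrt{4-p})$; both are well-defined since $1-p, 4-p < 0$ and neither is a rational square. First I would verify that $p$ splits in each of them: reducing modulo $p$ gives $1-p \equiv 1$ and $4-p \equiv 4$, so both are nonzero squares modulo $p$, and in particular $p$ does not divide their square-free parts. Hence $p$ is unramified in both fields, and if $D_1, D_2$ denote the respective fundamental discriminants we obtain $\chi_{D_1}(p) = \chi_{D_2}(p) = 1$. Now split into cases: if $\lambda_p(\mathbb{Q}(\sqrt{1-p})) = 1$ take $D_0 = D_1$, while if $\lambda_p(\mathbb{Q}(\sqrt{1-p})) > 1$, Theorem \ref{thm4} immediately yields $\lambda_p(\mathbb{Q}(\sqrt{4-p})) = 1$ and we take $D_0 = D_2$. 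In either case both hypotheses (i) and (ii) of Theorem \ref{thm3} are met, so the claimed lower bound follows for every $p \geq 5$.

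The case $p = 3$ is not covered by Theorem \ref{thm4}, so I would handle it by a direct verification using the candidate $D_0 = -23$ singled out in the text: one checks that $3$ splits in $\mathbb{Q}(\sqrt{-23})$ (since $-23 \equiv 1 \pmod{3}$, so $\chi_{-23}(3) = 1$), and that $\lambda_3(\mathbb{Q}(\sqrt{-23})) = 1$, the latter being a finite computation carried out, for instance, via Mizusawa's program \cite{Mi}. Granted this one numerical fact, Theorem \ref{thm3} applied with $D_0 = -23$ delivers the bound for $p = 3$ as well. Since all of the hard analytic work is already absorbed into Theorem \ref{thm3} and the Iwasawa-theoretic input into Theorem \ref{thm4}, the present argument is essentially assembly; the only genuinely external ingredient is the isolated $p = 3$ computation, which is the mildest of obstacles.
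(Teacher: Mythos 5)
Your proposal is correct and follows essentially the same route as the paper: the text preceding Corollary \ref{cor6} obtains the seed $D_0$ for $p \ge 5$ by exactly your dichotomy between $\mathbb{Q}(\sqrt{1-p})$ and $\mathbb{Q}(\sqrt{4-p})$ via Theorem \ref{thm4} (noting that $1-p$ and $4-p$ are quadratic residues modulo $p$), takes $D_0 = -23$ for $p = 3$, and then invokes Theorem \ref{thm3}.
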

%%%%%%%%%%%%%%%%%
Question \ref{q1} is solved affirmatively.
%%%%%%%%%%%%%%%%%%%%%
Secondly, we will study refinement of Question \ref{q1}.
%%%%%%%%%%%%%%%%%%%%%%%%%%%%%%%%%%%%%
\begin{prob} \label{q7}
Let $p$, $r_1$, $r_2$, ..., $r_s$ and $r'_1$, $r'_2$, ..., $r'_t$ be distinct odd prime numbers, where $s$ and $t$ are positive integers.
Is the set
%%%
\begin{eqnarray}
\left\{
\begin{array}{c|c}
{} & \lambda_p(\mathbb{Q}(\sqrt{D})) = 1, \nonumber \\
D: \ fundamental \ discriminant & \chi_D(p) = 1, \nonumber \\
of \ imaginary \ quadratic \ field & \chi_D(r_1) = \cdots = \chi_D(r_s) = 1, \ and \nonumber \\
{} & \chi_D(r'_1) = \cdots = \chi_D(r'_t) = -1 \nonumber \\
\end{array}
\right\}
\end{eqnarray}
%%%
infinite?
\end{prob}
%%%%%%%%%%%%%%%%%%%%
We study existence of infinite families of imaginary quadratic fields with $\lambda_p = 1$ under splitting conditions of prime numbers.
The results \cite[Theorem 13]{JO99} and \cite[Theorem]{Kim03} gave a hint on raising this question.
We give a generalization of Theorem \ref{thm3}.
%%%%%%%%%%%%%%%%%%%%%%%%%%%%%%%%%
\begin{thm} \label{thm8}
Let $p$ be an odd prime number 
and let $\mathfrak{S}_+$ and $\mathfrak{S}_-$ be mutually disjoint finite sets of odd prime numbers such that $p \in \mathfrak{S}_+$.
Fix $(A, B) \in \{(1, 8), (5, 8), (8, 16)\}$.
Assume that there is a negative fundamental discriminant $D_0$ of a quadratic field which satisfies the following conditions:
\begin{equation} 
 \begin{split}
{\rm (i)} \ &D_0 \equiv A\bmod B, \ \ \ {\rm (ii)} \ D_0 \neq -8, \ \ \ {\rm (iii)} \ \lambda_p(\mathbb{Q}(\sqrt{D_0})) = 1, \nonumber \\
{\rm (iv)} \ &every \ prime \ number \ r \in \mathfrak{S}_+ \ splits \ in \ \mathbb{Q}(\sqrt{D_0}) \ and \ every \ prime \nonumber \\
                &number \ r' \in \mathfrak{S}_- \ is \ inert \ in \ \mathbb{Q}(\sqrt{D_0}).
 \end{split}
\end{equation}                       
Then, for any sufficiently large $X \in \mathbb{R}$, we have
\begin{eqnarray}
\sharp \left\{
\begin{array}{c|c}
{} & D \equiv A\bmod B, \nonumber \\
D \in S_-(X) & \lambda_p(\mathbb{Q}(\sqrt{D})) = 1, \ and \nonumber \\
{} & condition \ (\ast ) \ holds \nonumber \\
\end{array}
\right \} \gg \frac{\sqrt{X}}{\log X}, 
\end{eqnarray}
where $(\ast )$ denotes the condition that every prime number $r \in \mathfrak{S}_+$ splits in $\mathbb{Q}(\sqrt{D})$
 and every prime number $r' \in \mathfrak{S}_-$ is inert in $\mathbb{Q}(\sqrt{D})$.
\end{thm}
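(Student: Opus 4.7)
The plan is to follow the strategy Byeon uses for Theorem \ref{thm3}, threading the additional congruence modulo $B$ and the splitting behavior at the primes of $\mathfrak{S}_+ \cup \mathfrak{S}_-$ through the parametrization of admissible discriminants.

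First, I would form a one-parameter family of candidate fundamental discriminants $D = D_0 \cdot q^\ast$, where $q$ runs over odd primes coprime to $2\,p\,D_0 \prod_{r \in \mathfrak{S}_+ \cup \mathfrak{S}_-} r$ and $q^\ast = (-1)^{(q-1)/2} q$, so that $q^\ast$ is the fundamental discriminant of $\mathbb{Q}(\sqrt{q})$, $D$ is automatically a negative fundamental discriminant coprime to $D_0$, and $\chi_D = \chi_{D_0}\chi_{q^\ast}$. The hypothesis $D_0 \neq -8$ together with the specific list $(A,B) \in \{(1,8),(5,8),(8,16)\}$ is precisely what guarantees that the $2$-adic type of $D_0$ is compatible with the requested residue class $A \bmod B$ once $q$ is constrained modulo $B$; this reduces to a short case analysis on $D_0 \bmod B$.

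Second, I would translate each imposed local condition on $D$ into a Chebotarev condition on $q$: the congruence $D \equiv A \pmod B$ becomes a congruence on $q$ modulo $B$, while for each $r \in \mathfrak{S}_+$ (resp.\ $r' \in \mathfrak{S}_-$) the prescribed splitting $\chi_D(r) = 1$ (resp.\ $\chi_D(r') = -1$) becomes $\chi_{q^\ast}(r) = \chi_{D_0}(r)$ (resp.\ $\chi_{q^\ast}(r') = -\chi_{D_0}(r')$), and by quadratic reciprocity these are congruences on $q$ modulo $4r$ or $4r'$. The conjunction of all these conditions defines a non-empty union of arithmetic progressions in $q$, containing $\gg Y/\log Y$ primes $q \le Y$ by Dirichlet.

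Third, and this is the main obstacle, I would show $\lambda_p(\mathbb{Q}(\sqrt{D})) = 1$ for the $q$ selected in Step 2 (after possibly one further Chebotarev cut). The lower bound $\lambda_p(\mathbb{Q}(\sqrt{D})) \ge 1$ is automatic since $p \in \mathfrak{S}_+$ forces $p$ to split in $\mathbb{Q}(\sqrt{D})$. For the matching upper bound, I would invoke Kida's formula along the quadratic $p$-extension $\mathbb{Q}(\sqrt{D})_\infty / \mathbb{Q}(\sqrt{D_0})_\infty$ inside the $\mathbb{Z}_p$-tower of the genus field $\mathbb{Q}(\sqrt{D_0},\sqrt{q^\ast})$; by imposing that the Frobenius of $q$ in the $p$-Hilbert class field of $\mathbb{Q}(\sqrt{D_0})$ is generic, the ramification contribution in Kida's formula vanishes, yielding $\lambda_p(\mathbb{Q}(\sqrt{D})) = \lambda_p(\mathbb{Q}(\sqrt{D_0})) = 1$. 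The subtle point is checking that this last Chebotarev condition is consistent with, and independent from, the conditions imposed in Step 2, so that the intersection still cuts out a positive-density set of primes.

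Finally, the counting step converts the lower bound $\gg Y/\log Y$ on primes $q \le Y$ into a lower bound on admissible discriminants $|D| \le X$ via $|D| = |D_0|\,q$; choosing $Y$ appropriately in terms of $X$ yields the stated bound $\gg \sqrt{X}/\log X$, with ample room to absorb any inefficiency coming from the auxiliary Chebotarev conditions.
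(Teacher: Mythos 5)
Your overall architecture (twist $D_0$ by an auxiliary prime, impose the congruence and splitting conditions via Dirichlet/Chebotarev, then count) superficially resembles a natural plan, but Step 3 --- the only step that addresses the Iwasawa invariant itself --- does not work, and it is the crux of the theorem. Kida's formula for an odd prime $p$ is a Riemann--Hurwitz--type formula valid only for $p$-extensions of CM fields (with vanishing $\mu$); here $\mathbb{Q}(\sqrt{D_0 q^\ast})_\infty$ is not an extension of $\mathbb{Q}(\sqrt{D_0})_\infty$ at all (the two quadratic fields are distinct and neither contains the other), and the relevant relative extension inside the genus field $\mathbb{Q}(\sqrt{D_0},\sqrt{q^\ast})$ has degree $2$, which is not a power of the odd prime $p$. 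There is no ``quadratic $p$-extension'' for odd $p$, and no Chebotarev condition on the Frobenius of $q$ in the $p$-Hilbert class field of $\mathbb{Q}(\sqrt{D_0})$ will make $\lambda_p(\mathbb{Q}(\sqrt{D_0 q^\ast}))$ computable from $\lambda_p(\mathbb{Q}(\sqrt{D_0}))$ in this way. (This is precisely why the case $p=2$ of Question \ref{q1} is easy via Kida's formula while the odd case is not.) A further warning sign: if your construction worked, the discriminants $|D| = |D_0|\,q$ with $q \le X/|D_0|$ would yield $\gg X/\log X$ admissible fields, strictly stronger than the stated $\sqrt{X}/\log X$; the weaker exponent in the theorem reflects a genuine loss in the method that actually succeeds.

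The paper instead replaces the Iwasawa-theoretic condition by an analytic one: by Proposition \ref{prp2.1}, $\lambda_p(\mathbb{Q}(\sqrt{D})) = 1$ if and only if $L(1-p,\chi_D)/p \not\equiv 0 \bmod p$, and these $L$-values occur (up to elementary factors) as the Fourier coefficients $H(p,N)/p$ of Cohen's Eisenstein series of weight $p+\frac{1}{2}$. One twists this form so as to retain only the coefficients at $N$ with $-N \equiv A \bmod B$ and with the prescribed splitting at the primes of $\mathfrak{S}_+\cup\mathfrak{S}_-\cup\{Q\}$, applies $U_l - 3V_l$ for an auxiliary prime $l$ chosen by Chebotarev (this is where conditions such as $l \equiv 1 \bmod P_3$ and $\chi_D(l)=-1$ enter), and invokes Sturm's theorem: if all coefficients up to the Sturm bound vanished mod $p$, the whole form would vanish mod $p$, contradicting the nonvanishing of the coefficient at $-D_0 l^3$, which follows from $\lambda_p(\mathbb{Q}(\sqrt{D_0}))=1$ and is where the hypothesis $D_0 \neq -8$ is used. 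Each admissible $l$ thus produces an $N_l \le l\kappa$ with $\lambda_p(\mathbb{Q}(\sqrt{-lN_l})) = 1$ and the required local behavior; since $|D_l| \le l^2\kappa$, counting primes $l \le \sqrt{X/\kappa}$ in the relevant progression gives exactly the bound $\gg \sqrt{X}/\log X$.
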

%%%%%%%%%%%%%%%%%
\begin{exa} \label{exa9}
Assume $p = 3$, $\mathfrak{S}_+ := \{3\}$, $\mathfrak{S}_- := \{5\}$, and $(A, B) = (1, 8)$. 
In this case, we can take $D_0 = -23$, for example.
It follows from Theorem \ref{thm8} that for any sufficiently large $X \in \mathbb{R}$ we have
\begin{eqnarray}
\sharp \left\{
\begin{array}{c|c}
{} & D \equiv 1\bmod 8, \nonumber \\
D \in S_-(X) & \lambda_3(\mathbb{Q}(\sqrt{D})) = 1, \nonumber \\
{} & \chi_D(3) = 1, \ and \ \chi_D(5) = -1 \nonumber \\
\end{array}
\right \} \gg \frac{\sqrt{X}}{\log X}.
\end{eqnarray}
Since the condition that $D \equiv 1\bmod 8$, $\chi_D(3) = 1$, and $\chi_D(5) = -1$ is equivalent to the congruence relation $D \equiv 73, 97 \bmod 120$,
the above equation implies that
$$\sharp\{D \in S_-(X) \mid D \equiv 73, 97 \bmod 120 \ and \ \lambda_3(\mathbb{Q}(\sqrt{D})) = 1\} \gg \frac{\sqrt{X}}{\log X}.$$
\end{exa}
%%%%%%%%%%%%%%%%%%%%%%
\begin{rem} \label{rem10}
We give a remark on imaginary quadratic fields whose Iwasawa $\lambda_p$-invariant is equal to $1$.
Let $K$ be an imaginary quadratic field and $p$ an odd prime number such that $p$ splits in $K$.
It is known that if $\lambda_p(K) = 1$, then Generalized Greenberg's Conjecture (called GGC, \cite{Gr73}) holds for $K$ and $p$.
Therefore, if $D_0$ satisfying the above properties exists, 
we get infinite families of imaginary quadratic fields for which GGC holds true.
\end{rem}
%%%%%%%%%%%%%%%%%%%%%%
Thirdly, as a topic relevant to Question \ref{q1}, we consider the following question.
%%%%%%%%%%%%%%%%%%%%%%%%%%%%%%%%%%%%%
\begin{prob} \label{q1.1}
Let $p$ be an odd prime number.
Is the set
%%%
\begin{eqnarray}
\left\{D:
\begin{array}{c|c}
fundamental \ discriminant \ of & {} \nonumber \\
imaginary \ quadratic \ field & {} \nonumber \\
\end{array}
\lambda_p(\mathbb{Q}(\sqrt{D})) > 1 \ and \ \chi_D(p) = 1 \right \}
\end{eqnarray}
%%%
infinite?
\end{prob}
%%%%%%%%%%%%%%%%%%%%
Question \ref{q1.1} is solved affirmatively by J.~W.~Sands~\cite{Sa93}.
The outline of his proof is as follows.
Fix an odd prime number $p$ and an arbitrary integer $n \ge 2$ which is not divisible by $p$.
Define 
$$A_{p, n} := \{a \in \mathbb{Z} \mid 0 < a < 2p^n, \ p \nmid a, \ and \ a^{p-1} \equiv 1 \bmod p^2\}.$$
He proved that $p$ splits in $\mathbb{Q}(\sqrt{a^2 - 4p^{2n}})$ and that
$\lambda_p(\mathbb{Q}(\sqrt{a^2 - 4p^{2n}})) > 1$ if $a \in A_{p, n}$ (see \cite[Lemmas 3.1 and 3.2]{Sa93}).
The cardinality of $A_{p, n}$ is $2(p - 1)p^{n-2}$ (see the proof of Theorem 3.3 in \cite{Sa93}).
He counted the number of imaginary quadratic fields $\mathbb{Q}(\sqrt{a^2 - 4p^{2n}})$ with $a \in A_{p, n}$ and
showed that there exist at least $2(p - 1)p^{n-2} - 3$ imaginary quadratic fields $K$ such that $p$ splits in $K$, $\lambda_p(K) > 1$,
 and the fundamental discriminant of $K$ is greater than $-4p^{2n}$ (see \cite[Theorem 3.3]{Sa93}).
Let $n \rightarrow \infty$ in the above result.
Then, we find that there exist infinitely many imaginary quadratic fields $K$ such that $p$ splits in $K$
 and the Iwasawa $\lambda_p$-invariant of $K$ is greater than $1$ (see \cite[Corollary 3.4]{Sa93}).\\
\hspace{16pt}We refine his proof.
By studying the divisibility of the class number of the imaginary quadratic fields $\mathbb{Q}(\sqrt{1 - 4p^n})$, 
we obtain a simpler proof of his result in the following way.
%%%%%%%%%%%%%%%%%%%%
\begin{thm} \label{thm14}
Let $p$ be an odd prime number and let $n$ be an integer greater than $1$ such that $\gcd(p, n) = 1$.
Then, 
$$\lambda_p(\mathbb{Q}(\sqrt{1 - 4p^n})) > 1.$$
\end{thm}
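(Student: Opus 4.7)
The plan is to write down an explicit generator of $\mathfrak{p}^n$ inside $K:=\mathbb{Q}(\sqrt{1-4p^n})$, show that its Fermat quotient at the conjugate prime $\bar{\mathfrak{p}}$ vanishes, and then invoke R.~Gold's criterion for $\lambda_p$.

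First, since $1-4p^n\equiv 1\pmod p$ is a square, the prime $p$ splits in $K$ as $\mathfrak{p}\bar{\mathfrak{p}}$. The key element is $\alpha:=(1+\sqrt{1-4p^n})/2\in\mathcal{O}_K$, whose minimal polynomial $x^2-x+p^n$ gives $\alpha+\bar\alpha=1$ and $N(\alpha)=p^n$. The trace relation makes $(\alpha)$ and $(\bar\alpha)$ coprime, and together with $(\alpha)(\bar\alpha)=(p^n)=\mathfrak{p}^n\bar{\mathfrak{p}}^n$ this forces $(\alpha)=\mathfrak{p}^n$ (possibly after swapping the two primes above $p$). Hence the order $m$ of $[\mathfrak{p}]$ in $\mathrm{Cl}(K)$ divides $n$, so $\gcd(m,p)=1$ follows from $\gcd(n,p)=1$; writing $\mathfrak{p}^m=(\beta)$ we get $\alpha=\eta\beta^{n/m}$ for some unit $\eta\in\mathcal{O}_K^\times$. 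A short case check rules out $K=\mathbb{Q}(i)$ entirely, and shows $K=\mathbb{Q}(\sqrt{-3})$ can occur only when $p\equiv 1\pmod 6$; in every case we have $\eta^{p-1}=1$.

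Next I would compute in the completion $K_{\bar{\mathfrak{p}}}\cong\mathbb{Q}_p$. The condition $v_{\bar{\mathfrak{p}}}(\alpha)=0$ selects the square root $\delta\in\mathbb{Z}_p$ of $1-4p^n$ with $\delta\equiv 1\pmod{p^n}$, and a direct expansion yields $\alpha=(1+\delta)/2\equiv 1-p^n\pmod{p^{2n}}$; in particular $\alpha\in 1+p^n\mathbb{Z}_p$. Because $n\ge 2$, this gives $\alpha^{p-1}\equiv 1\pmod{p^2}$. Substituting $\alpha=\eta\beta^{n/m}$ and using $\eta^{p-1}=1$, we obtain $(\beta^{p-1})^{n/m}\equiv 1\pmod{p^2}$ in $K_{\bar{\mathfrak{p}}}$; since $\beta^{p-1}\in 1+p\mathbb{Z}_p$ has $p$-power order in $(\mathbb{Z}_p/p^2\mathbb{Z}_p)^\times$ and $\gcd(n/m,p)=1$, I conclude $\beta^{p-1}\equiv 1\pmod{p^2}$.

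Finally I would invoke Gold's criterion (\cite[Theorem~4]{Go74}): assuming $p\nmid h_K$, one has $\lambda_p(K)=1$ if and only if the generator $\pi$ of $\mathfrak{p}^{h_K}$ satisfies $\pi^{p-1}\not\equiv 1\pmod{\bar{\mathfrak{p}}^2}$. Since $\pi=\pm\beta^{h_K/m}$ and we have just shown the Fermat quotient of $\beta$ at $\bar{\mathfrak{p}}$ vanishes, the same holds for $\pi$, forcing $\lambda_p(K)\ge 2$. The main obstacle in this plan is therefore to discharge the hypothesis $p\nmid h_K$ for $K=\mathbb{Q}(\sqrt{1-4p^n})$ --- this is exactly the ``divisibility of the class number'' study promised in the introduction, and is the gap in \cite{By05} that the paper explicitly fixes; I expect this class-number analysis, rather than the explicit $p$-adic computation, to be the substantive part of the work.
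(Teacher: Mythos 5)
Your computation of the key congruence is correct and is essentially the paper's: the author proves Theorem \ref{thm3.1} (2) (for $\alpha = (x_1+\sqrt{x_1^2-4p^n})/2$ with $(\alpha)=\wp^n$, the condition $\alpha^{p-1}\equiv 1 \bmod \overline{\wp}^2$ reduces to $x_1^{p-1}\equiv 1\bmod p^2$) and then specializes to $x_1=1$, where that congruence holds trivially. Your $p$-adic expansion $\alpha\equiv 1-p^n \bmod p^{2n}$ reaches the same conclusion by a slightly different route.

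The problem is your final step. You invoke Gold's criterion, which presupposes $p\nmid h(K)$, and you then declare that discharging this hypothesis is ``the substantive part of the work'' still to be done. That is a genuine gap in your write-up, and moreover the hypothesis is neither needed nor provable in general for these fields. The criterion the paper actually uses (Theorem \ref{thm1.1}, Sands' sharpening of Gold's result) reads: for $\wp^{m_1}=(\xi)$ with $p\nmid m_1$, one has $\lambda_p(K)>1$ \emph{if and only if} $\xi^{p-1}\equiv 1\bmod\overline{\wp}^2$ \emph{or} $p\mid h(K)$. The class-number condition sits on the same side of the equivalence as a disjunct, so the congruence $\alpha^{p-1}\equiv 1\bmod\overline{\wp}^2$ alone already forces $\lambda_p(K)>1$; and in the complementary case $p\mid h(K)$ the conclusion is automatic. (Indivisibility of the class number is what one must verify to prove $\lambda_p=1$, as in Theorem \ref{thm12} --- not to prove $\lambda_p>1$.) Note also that the ``divisibility of the class number'' study advertised in the introduction for $\mathbb{Q}(\sqrt{1-4p^n})$ is the proof that the ideal class of $\wp$ has order exactly $n$, which yields the distinctness statement Theorem \ref{thm14.1}; it has nothing to do with showing $p\nmid h(K)$. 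Replace Gold's criterion by Theorem \ref{thm1.1} and your argument closes immediately; your detour through $\beta$ with $\wp^m=(\beta)$ then also becomes unnecessary, since $m_1=n$ is already prime to $p$ and $\xi=\alpha$ can be used directly.
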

%%%%%%%%%%%%%%%%%%%%%
\begin{thm} \label{thm14.1}
Let $p$ be an odd prime number.
If $n_1$ and $n_2$ are integers greater than $8$
such that $\mathbb{Q}(\sqrt{1 - 4p^{n_1}}) = \mathbb{Q}(\sqrt{1 - 4p^{n_2}})$, then $n_1 = n_2$.
\end{thm}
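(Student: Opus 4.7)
The plan is to argue by contradiction: suppose $n_1 > n_2 > 8$ with $\mathbb{Q}(\sqrt{1 - 4p^{n_1}}) = \mathbb{Q}(\sqrt{1 - 4p^{n_2}})$. Because $4p^{n} - 1 \equiv 3 \pmod 4$ for every $n \geq 1$, the fundamental discriminant of $\mathbb{Q}(\sqrt{1 - 4p^n})$ is $-s_n$, where $s_n$ denotes the squarefree part of $4p^n - 1$. The hypothesis therefore says $s_{n_1} = s_{n_2}$, equivalently
\[
(4p^{n_1} - 1)(4p^{n_2} - 1) = m^2
\]
for some positive integer $m$.

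Writing $r := n_1 - n_2$ and $d := \gcd(4p^{n_1} - 1,\, 4p^{n_2} - 1)$, the coprime cofactors after dividing out $d$ have square product, so each is itself a square: $4p^{n_1} - 1 = d a^2$ and $4p^{n_2} - 1 = d b^2$ with $\gcd(a, b) = 1$. Routine congruences show that $d$ is odd, coprime to $p$, satisfies $d \mid p^r - 1$ (from the linear combination $(4p^{n_1} - 1) - p^r(4p^{n_2} - 1) = p^r - 1$), and $d \equiv 3 \pmod 4$, hence $d \geq 3$. Subtracting gives $d(a-b)(a+b) = 4p^{n_2}(p^r - 1)$, and since $\gcd(d, p) = 1$ and $\gcd(a-b, a+b) = 2$ (both $a, b$ are odd), the prime power $p^{n_2}$ divides exactly one of $a - b$ or $a + b$.

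Combined with the size bound $a \leq 2p^{n_1/2}/\sqrt{d}$, the divisibility already forces $n_1 \geq 2n_2 - 1$. The borderline $n_1 = 2n_2 - 1$ is excluded by a short parity argument (the expression for $2a$ becomes an odd integer plus an even one), and $n_1 = 2n_2$ by an explicit computation: the system determines $a$ and $b$ up to divisibility requirements such as $6 \mid p^{n_2} - 4$, which fail for every odd prime $p$ and every $n_2 \geq 1$. For the main case $n_1 > 2n_2$, elimination of $a, b$ yields the quadratic
\[
p^{2n_2} X^2 - (8p^{n_1} + 8p^{n_2} - 4)\,X + 16(p^r - 1)^2 = 0
\]
(in $X = u^2 s$, where $s$ is the squarefree part of $d$ and $u$ is the cofactor with $p^{n_2} \mid a \pm b = p^{n_2} u$), and one computes that its discriminant is exactly $16m^2$. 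Integrality of the positive-integer root $X$ forces the congruence
\[
m \equiv \pm(1 - 2p^{n_2}) \pmod{p^{2n_2}},
\]
so that $m = \pm(1 - 2p^{n_2}) + p^{2n_2} h$ for some integer $h$; combined with the elementary bound $|m| < 4p^{(n_1 + n_2)/2}$ this gives $|h| < 6\,p^{(n_1 - 3n_2)/2}$.

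When $n_1 < 3n_2$ the bound confines $h$ to a finite set (at most $\{-5,\ldots,5\}$); substituting each admissible $h$ into $m^2 = (4p^{n_1} - 1)(4p^{n_2} - 1)$ produces a polynomial identity in $p^{n_2}$ of bounded degree whose leading term is $p^{4n_2}$, and one verifies by direct term-by-term comparison that this identity fails for every odd prime $p$ as soon as $n_2 \geq 9$. When $n_1 \geq 3n_2$, the $p$-adic analysis can be iterated using the sharper congruence $m^2 \equiv 1 - 4p^{n_2} \pmod{p^{n_1}}$ modulo successively higher powers $p^{3n_2}, p^{4n_2}, \ldots$, each step reducing the residue class of $h$ modulo $p^{n_2}$ and eventually cornering $m$ into the candidate $m = \pm(1 - 2p^{n_2})$; this last value forces $p^{n_1}(4p^{n_2} - 1) = p^{2n_2}$, hence $n_1 = 2n_2$, contradicting $n_1 > 2n_2$. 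The main obstacle is the bookkeeping of this finite case check, and the cut-off $n_2 > 8$ is chosen precisely so that the dominant-term comparison in each of the finitely many polynomial identities is unambiguously conclusive.
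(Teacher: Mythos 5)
Your overall strategy (equal fields $\Leftrightarrow$ $(4p^{n_1}-1)(4p^{n_2}-1)=m^2$, followed by a gcd decomposition and a combined $p$-adic/archimedean squeeze on $m$) is workable in principle, and some of your intermediate algebra checks out: the discriminant of your quadratic in $X$ is indeed $16m^2$, and the congruence $m\equiv\pm(1-2p^{n_2})\bmod p^{2n_2}$ already follows from $m^2\equiv 1-4p^{n_2}\bmod p^{2n_2}$. But there is a genuine gap in your main case $n_1\ge 3n_2$. The square roots of $1-4p^{n_2}$ modulo $p^{kn_2}$ for $k\ge 3$ are \emph{not} $\pm(1-2p^{n_2})$: since $(1-2p^{n_2})^2=1-4p^{n_2}+4p^{2n_2}\not\equiv 1-4p^{n_2}\bmod p^{3n_2}$, they are the truncations $\pm\bigl(1-2p^{n_2}-2p^{2n_2}-4p^{3n_2}-\cdots\bigr)$ of the $p$-adic binomial series. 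Your iteration therefore does not corner $m$ into $\pm(1-2p^{n_2})$; it corners $m$ into $\pm(\text{a truncated series})$ plus a multiple of a high power of $p$, and excluding the possibility that $m$ equals such a truncation exactly needs a separate valuation or size argument that you have not supplied (the bound $\lvert m\rvert<4p^{(n_1+n_2)/2}$ alone does not do it: already for $n_1$ near $3n_2$ the truncation has size about $2p^{2n_2}$, which is within the bound). In addition, the finite case check for $2n_2<n_1<3n_2$ is only asserted, not carried out, and your claim that the hypothesis $n_2>8$ is "chosen precisely" to make those comparisons conclusive is unsupported: the number $8$ is never actually used anywhere in your argument, and the steps where it would have to enter are never identified.

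For comparison, the paper's proof is a one-line deduction from a quite different statement: by Theorem \ref{thm3.2} (3) (quoted from Ishii's paper and the author's preprint, and resting ultimately on the Bugeaud--Shorey theorem, Theorem \ref{thm3.4}, on the generalized Ramanujan--Nagell equation $d_1x^2+d_2=\gamma^2p^y$), for $n>8$ the order of the ideal class of the prime of $\mathbb{Q}(\sqrt{1-4p^n})$ above $p$ is exactly $n$; since that order is an invariant of the field together with $p$, equal fields force $n_1=n_2$. The Diophantine content you are trying to reproduce by hand is precisely the statement that $d_1x^2+1=4p^y$ has at most one positive solution $(x,y)$, which is a single application of Theorem \ref{thm3.4} with $\gamma=2$ and $d_2=1$ (one checks that the exceptional sets $\mathfrak{E}$, $\mathfrak{F}$, $\mathfrak{G}$, $\mathfrak{H}_2$ contribute nothing for odd $p$). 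If you want an elementary self-contained proof, the cleanest repair of your sketch is to replace the faulty iteration by the correct truncated-series candidates and kill each one by comparing $p$-adic valuations in the resulting exact identity, as is done (in analogous situations) in Lemmas \ref{lem3.5}, \ref{lem3.8}, and \ref{lem3.11} of the paper.
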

%%%%%%%%%%%%%%%%%%%%%
Theorem \ref{thm14.1} follows from our proof that the order of the ideal class containing the prime ideal over $p$ is $n$.
By Theorems \ref{thm14} and \ref{thm14.1}, for a fixed $p$, the set
$$\{\mathbb{Q}(\sqrt{1 - 4p^{n'}}) \mid n' \in \mathbb{Z} \ such \ that \ n' > 8 \ and \ \gcd(p, n') = 1\}$$
contains infinitely many imaginary quadratic fields whose Iwasawa $\lambda_p$-invari-\\
ant is greater than $1$.     
%%%%%%%%%%%%%%%%%%%%%%%%%%%%%%%%

In the above results, we construct infinite families of imaginary quadratic fields with $\lambda_p > 1$ explicitly.
On the other hand, in Theorems \ref{thm3} and \ref{thm8}, we do not construct infinite families of imaginary quadratic fields with $\lambda_p = 1$ explicitly
(see the proof of Theorem \ref{thm8} in Section \ref{sec3} of this paper). 
Finally, we consider the following question.
%%%%%%%%%%%%%%%%%%%%%%     
\begin{prob} \label{q11}
Let $p$ be an odd prime number.
Construct explicitly an infinite family of imaginary quadratic fields whose Iwasawa $\lambda_p$-invariant is equal to $1$.
\end{prob}
%%%%%%%%%%%%%%%%%%%%%%
We construct such imaginary quadratic fields in the following theorem.
However, we do not know whether they are infinite or not.
We denote by $h(k)$ the class number of an algebraic number field $k$.
%%%%%%%%%%%%%%%%%%%%%%
\begin{thm} \label{thm12}
Let $p$ be an odd prime number, $q_1$ a prime factor of $p - 2$ such that $q_1^{p-1} \not \equiv 1 \bmod p^2$, and
$n$ an integer greater than $1$ such that $\gcd(p, n) = 1$.\\
$(1)$ Assume $p \equiv 3 \bmod 4$.\\
$({\rm i})$ Suppose $2^{p-1} \not \equiv 1 \bmod p^2$. 
If $p \nmid h(\mathbb{Q}(\sqrt{1 - p^n}))$, then 
$$\lambda_p(\mathbb{Q}(\sqrt{1 - p^n})) = 1.$$
$({\rm ii})$ Suppose $2^{p-1} \equiv 1 \bmod p^2$. 
If $p \nmid h(\mathbb{Q}(\sqrt{q_1^2 - p^n}))$, then 
$$\lambda_p(\mathbb{Q}(\sqrt{q_1^2 - p^n})) = 1.$$ 
$(2)$ Assume $p \equiv 1 \bmod 4$.\\
$({\rm i})$ Suppose $2^{p-1} \not \equiv 1 \bmod p^2$. 
If $p \nmid h(\mathbb{Q}(\sqrt{4 - p^n}))$, then 
$$\lambda_p(\mathbb{Q}(\sqrt{4 - p^n})) = 1.$$
$({\rm ii})$ Suppose $2^{p-1} \equiv 1 \bmod p^2$. 
If $p \nmid h(\mathbb{Q}(\sqrt{4q_1^2 - p^n}))$, then 
$$\lambda_p(\mathbb{Q}(\sqrt{4q_1^2 - p^n})) = 1.$$
\end{thm}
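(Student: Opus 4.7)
I will handle all four cases uniformly by taking $m\in\{1,q_1,2,2q_1\}$ in cases (1)(i), (1)(ii), (2)(i), (2)(ii) respectively, and setting $D:=m^2-p^n$, $K:=\mathbb{Q}(\sqrt{D})$, and $\alpha:=m+\sqrt{D}\in\mathcal{O}_K$. Then $N(\alpha)=p^n$; since $D\equiv m^2\bmod p$ with $p\nmid m$ (the relevant $q_1\mid p-2$ forces $q_1<p$), $p$ splits in $K$ as $p\mathcal{O}_K=\mathfrak{p}\bar{\mathfrak{p}}$. The sum $(\alpha)+(\bar\alpha)$ contains both $2m$ and $p^n$, which are coprime in $\mathbb{Z}$, so the ideals $(\alpha)$ and $(\bar\alpha)$ are coprime; combined with $N(\alpha)=p^n$ this forces $(\alpha)=\mathfrak{p}^n$ after relabeling $\mathfrak{p}\leftrightarrow\bar{\mathfrak{p}}$ if necessary. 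In particular the order of the class $[\mathfrak{p}]$ divides $n$, hence is coprime to $p$.

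\medskip

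The key tool is Gold's criterion (\cite{Go74}; cf.\ \cite[Lemma 3.1]{Sa93}), which in the form I need reads: under $p\nmid h(K)$, if $\alpha\in\mathcal{O}_K$ generates $\mathfrak{p}^N$ for some $N$ coprime to $p$, then $\lambda_p(K)\geq 2$ if and only if $\alpha^{p-1}\equiv 1\bmod \bar{\mathfrak{p}}^2$ in the completion $\mathcal{O}_{K,\bar{\mathfrak{p}}}\cong\mathbb{Z}_p$ (where $\alpha$ is a unit since $(\alpha)$ is coprime to $\bar{\mathfrak{p}}$). A brief Hensel-style expansion supplies the relevant reduction: in the $\bar{\mathfrak{p}}$-adic embedding $\sqrt{D}$ maps to the root $\sigma\in\mathbb{Z}_p$ with $\sigma\equiv m\bmod p$, and writing $\sigma=m+cp+O(p^2)$ together with $\sigma^2=m^2-p^n$ and $n\geq 2$ gives $2mc\equiv 0\bmod p$, hence $c\equiv 0\bmod p$ and $\sigma\equiv m\bmod p^2$. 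Therefore
\[
\alpha\equiv 2m \bmod \bar{\mathfrak{p}}^2.
\]

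\medskip

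By Gold's criterion, $\lambda_p(K)\geq 2$ is equivalent to $(2m)^{p-1}\equiv 1\bmod p^2$, and it remains to verify that each sub-hypothesis of the theorem rules this out. For (1)(i), $2m=2$ and the hypothesis is precisely $2^{p-1}\not\equiv 1\bmod p^2$. For (1)(ii), $2m=2q_1$; using $2^{p-1}\equiv 1\bmod p^2$, $(2q_1)^{p-1}\equiv q_1^{p-1}$, which is $\not\equiv 1\bmod p^2$ by the choice of $q_1$. For (2)(i), $2m=4$; writing $2^{p-1}=1+kp$ with $p\nmid k$, $4^{p-1}=(1+kp)^2\equiv 1+2kp\bmod p^2$, which is not $1\bmod p^2$. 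For (2)(ii), $2m=4q_1$; combining the previous two reductions, $(4q_1)^{p-1}\equiv q_1^{p-1}\not\equiv 1\bmod p^2$. The main obstacle is the correct formulation and invocation of Gold's criterion—specifically, promoting its classical form (for a generator of $\mathfrak{p}^{h(K)}$) to the form I use here (for $\alpha$ generating $\mathfrak{p}^n$); this is a short exercise using the cyclic structure of $(\mathbb{Z}/p^2)^\times$ of order $p(p-1)$ together with the triviality of $u^{p-1}\bmod\bar{\mathfrak{p}}^2$ for units $u\in\mathcal{O}_K^\times$ (automatic when $u=\pm 1$; for the exceptional fields $K=\mathbb{Q}(i)$ and $\mathbb{Q}(\sqrt{-3})$ the splitting of $p$ forces $p\equiv 1\bmod 4$ or $p\equiv 1\bmod 6$, so $u^{p-1}=1$ already in $\mathcal{O}_K$).
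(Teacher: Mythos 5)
Your proposal is correct and follows essentially the same route as the paper: the paper reduces the theorem to the case of fields $\mathbb{Q}(\sqrt{x_1^2-p^n})$ with $x_1=1,2,q_1,2q_1$, shows $(\alpha)=\wp^n$ for $\alpha=x_1+\sqrt{x_1^2-p^n}$, invokes Sands' criterion to reduce everything to $(2x_1)^{p-1}\not\equiv 1\bmod p^2$, and then checks the four sub-hypotheses exactly as you do (including the observation that $2^{p-1}\not\equiv -1\bmod p^2$, equivalently your $4^{p-1}\equiv 1+2kp$ computation). The only differences are cosmetic: you derive $\alpha\equiv 2x_1\bmod \overline{\wp}^2$ by a short computation in the completion $\mathcal{O}_{K,\overline{\wp}}\cong\mathbb{Z}_p$ where the paper expands $\alpha^p-\alpha$ binomially, and the ``promotion'' of Gold's criterion that you flag as the main obstacle is already unnecessary, since Sands' version (Theorem 2.1 in the paper, \cite{Sa93} Proposition 2.1) is stated for an arbitrary generator of $\wp^{m_1}$ with $p\nmid m_1$.
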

%%%%%%%%%%%%%%%%%%%
\begin{rem} \label{rem13}
A prime number $p$ such that $2^{p-1} \equiv 1 \bmod p^2$ is called a Wieferich prime.
As examples of Wieferich primes, 1093 and 3511 are known.
\end{rem}
%%%%%%%%%%%%%%%%%%%%     
\begin{rem} \label{rem14}
We give a remark on Theorem \ref{thm12} (1) (ii) and (2) (ii).
For a given odd prime number $p$ with $2^{p-1} \equiv 1 \bmod p^2$, we can prove the existence of $q_1$ as follows.
Assume that $q_1^{p-1} \equiv 1 \bmod p^2$ for any prime factor $q_1$ of $p - 2$.
Then, $(p - 2)^{p-1} \equiv 1 \bmod p^2$.
Expanding the left side of this equation, we find
\begin{equation}
 \begin{split}
(p - 2)^{p-1} &= \sum_{j=0}^{p-1}\binom {p-1}{j}p^j(-2)^{p-1-j} \notag \\
                      &= (-2)^{p-1} + (p - 1)p(-2)^{p-2} + \sum_{j=2}^{p-1}\binom {p-1}{j}p^j(-2)^{p-1-j} \notag \\
                      &\equiv 2^{p-1} - p(-2)^{p-2} \bmod p^2, \notag
 \end{split}
\end{equation}                       
where $\binom {p}{j}$ denotes the binomial coefficient.
Using the assumption $2^{p-1} \equiv 1 \bmod p^2$, we have
$$2^{p-1} - p(-2)^{p-2} \equiv 1 - p(-2)^{p-2}\bmod p^2.$$
Then,
$$1 \equiv 1 - p(-2)^{p-2}\bmod p^2,$$
that is,
$$p \mid 2.$$
This is a contradiction.
Then, there exists at least one prime factor $q_1$ of $p - 2$ such that $q_1^{p-1} \not \equiv 1 \bmod p^2$.
\end{rem}
%%%%%%%%%%%%%%%%%%%%
For the imaginary quadratic fields treated in Theorem \ref{thm12}, we obtain the following theorem.
%%%%%%%%%%%%%%%%%%%%
\begin{thm} \label{thm12.1}
Let $p$ be an odd prime number and let $q_1$ be a prime factor of $p - 2$ such that $q_1^{p-1} \not \equiv 1 \bmod p^2$.\\
$(1)$ Assume $p \equiv 3 \bmod 4$.\\
\hspace{10pt}$({\rm i})$ Suppose $p \neq 3$ and $2^{p-1} \not \equiv 1 \bmod p^2$. 
If $n_1$ and $n_2$ are positive odd integers such that $\mathbb{Q}(\sqrt{1 - p^{n_1}}) = \mathbb{Q}(\sqrt{1 - p^{n_2}})$, 
then $n_1 = n_2$.\\
\hspace{10pt}$({\rm ii})$ Suppose $p =  3$. 
If $n_1$ and $n_2$ are positive odd integers with $n_1$, $n_2 \neq 5$ such that $\mathbb{Q}(\sqrt{1 - p^{n_1}}) = \mathbb{Q}(\sqrt{1 - p^{n_2}})$, 
then $n_1 = n_2$.\\
\hspace{10pt}$({\rm iii})$ Suppose $2^{p-1} \equiv 1 \bmod p^2$.
If $n_1$ and $n_2$ are positive odd composite numbers such that $\mathbb{Q}(\sqrt{q_1^2 - p^{n_1}}) = \mathbb{Q}(\sqrt{q_1^2 - p^{n_2}})$, 
then $n_1 = n_2$.\\
$(2)$ Assume $p \equiv 1 \bmod 4$.\\
\hspace{10pt}$({\rm i})$ Suppose $2^{p-1} \not \equiv 1 \bmod p^2$. 
If $n_1$ and $n_2$ are positive integers such that $\mathbb{Q}(\sqrt{4 - p^{n_1}})$, $\mathbb{Q}(\sqrt{4 - p^{n_2}}) \neq \mathbb{Q}(\sqrt{-1})$
and $\mathbb{Q}(\sqrt{4 - p^{n_1}}) = \mathbb{Q}(\sqrt{4 - p^{n_2}})$, then $n_1 = n_2$.\\
\hspace{10pt}$({\rm ii})$ Suppose $2^{p-1} \equiv 1 \bmod p^2$. 
If $n_1$ and $n_2$ are positive composite numbers such that $\mathbb{Q}(\sqrt{4q_1^2 - p^{n_1}})$, $\mathbb{Q}(\sqrt{4q_1^2 - p^{n_2}}) \neq \mathbb{Q}(\sqrt{-1})$
and $\mathbb{Q}(\sqrt{4q_1^2 - p^{n_1}}) = \mathbb{Q}(\sqrt{4q_1^2 - p^{n_2}})$, then $n_1 = n_2$.
\end{thm}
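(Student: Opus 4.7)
The plan is to reduce Theorem \ref{thm12.1} to the sharper assertion that in each of the cases (1)(i)--(2)(ii), the order of the ideal class $[\mathfrak{p}]$ in $\mathrm{Cl}(K)$ is \emph{exactly} $n$, where $K = \mathbb{Q}(\sqrt{a^{2} - p^{n}})$ and $a$ is $1$, $q_{1}$, $2$, or $2q_{1}$ according to the case, and $\mathfrak{p}$ is a prime of $K$ lying above $p$. This is in the same spirit as the strategy mentioned above for Theorem \ref{thm14.1}. Once the sharp statement holds, Theorem \ref{thm12.1} is immediate: the order of $[\mathfrak{p}]$ is determined by $K$ (up to swapping $\mathfrak{p}$ with $\bar{\mathfrak{p}}$, which does not affect the order), so an equality $\mathbb{Q}(\sqrt{a^{2} - p^{n_{1}}}) = \mathbb{Q}(\sqrt{a^{2} - p^{n_{2}}})$ forces $n_{1} = n_{2}$.

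To prove the sharp statement, I first set $\alpha = a + \sqrt{a^{2} - p^{n}}$. Then $\alpha \in \mathcal{O}_{K}$ with $N_{K/\mathbb{Q}}(\alpha) = p^{n}$, and because $a^{2} - p^{n} \equiv a^{2} \bmod p$ is a nonzero square, $p$ splits in $K$ as $\mathfrak{p}\bar{\mathfrak{p}}$; since $\alpha + \bar\alpha = 2a$ is coprime to $p$, one concludes $(\alpha) = \mathfrak{p}^{n}$ after possibly relabelling $\mathfrak{p}$. Hence the order of $[\mathfrak{p}]$ divides $n$. For the reverse, I assume for contradiction that $\mathfrak{p}^{e} = (\beta)$ with $\beta \in \mathcal{O}_{K}$, $e$ a proper divisor of $n$, and $k := n/e \geq 2$. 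Then $\alpha = u \beta^{k}$ for some unit $u \in \mathcal{O}_{K}^{\times}$. The explicit exclusion $K \neq \mathbb{Q}(\sqrt{-1})$ in case (2), combined with the fact that $\mathbb{Q}(\sqrt{-3})$ does not arise under the remaining case hypotheses (or can be dispatched by a direct check), gives $u \in \{\pm 1\}$; in case (1), $n$ and hence $k$ is odd, so the sign may be absorbed into $\beta$ and we reduce to $\alpha = \beta^{k}$. Taking traces yields
\[
L_{k}(s, p^{e}) := \beta^{k} + \bar\beta^{k} = 2a,
\]
where $s := \beta + \bar\beta \in \mathbb{Z}$. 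Because $s$ divides $L_{k}(s, t)$ as a polynomial identity when $k$ is odd, we get $s \mid 2a$, leaving only finitely many candidates for $s$.

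For each such $s$, substitution into $L_{k}(s, p^{e}) = \pm 2a$ reduces to an explicit algebraic equation in the unknown $p^{e}$. For small odd $k$ (e.g.\ $k = 3, 5, 7$), the resulting polynomials of degree $(k-1)/2$ in $p^{e}$ factor completely and their integer roots can be enumerated; in case (1)(i) the only surviving solution turns out to be $(p, k, e, s) = (3, 5, 1, 2)$, corresponding to $n = 5$, which is precisely the value excluded in (1)(ii). For the Wieferich cases (1)(iii) and (2)(ii), one uses the $p$-adic expansion $L_{k}(s, p^{e}) \equiv s^{k} - k s^{k-2} p^{e} \bmod p^{2e}$ together with the hypotheses $2^{p-1} \not\equiv 1 \bmod p^{2}$ and $q_{1}^{p-1} \not\equiv 1 \bmod p^{2}$ to force an impossible constraint on the multiplicative order of $2$ or $q_{1}$ modulo $p^{2}$; the hypothesis that $n$ be composite in these cases guarantees the existence of a proper divisor $e > 1$ where this obstruction has room to bite. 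The main obstacle will be the careful Diophantine bookkeeping in this final step --- identifying precisely which sporadic $(p, n)$ can produce an integer solution of $L_{k}(s, p^{e}) = \pm 2a$ and verifying that each is one of the explicit exceptions ($p = 3$ with $n = 5$, $n$ prime in the Wieferich cases, $K = \mathbb{Q}(\sqrt{-1})$) enumerated in the hypothesis list of Theorem \ref{thm12.1}.
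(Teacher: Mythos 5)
Your reduction is the same one the paper makes: show that the order of the ideal class of the prime $\wp$ above $p$ equals $n$ exactly, by writing $(\alpha) = \wp^{n}$ with $\alpha = a + \sqrt{a^{2}-p^{n}}$ and ruling out $\pm\alpha = \beta^{k}$ for $k \geq 2$. Your first steps (unit group trivial outside $\mathbb{Q}(\sqrt{-1})$, $\mathbb{Q}(\sqrt{-3})$; trace/rational-part comparison forcing $s = \beta + \bar\beta$ to divide $2a$) also match the paper's. The gap is in the final Diophantine step, which you defer to ``careful bookkeeping'': solving $L_{k}(s, p^{e}) = \pm 2a$ by factoring polynomials ``for small odd $k$ (e.g.\ $k = 3,5,7$)'' cannot work, because $k = n/e$ is unbounded, so no finite enumeration of exponents covers all cases. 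The decisive case is $s = \pm 2a$ (i.e.\ the rational part of $\beta$ equal to $\pm a$): there the norm equation gives $p^{n/k} = a^{2} - v^{2}d_{0}$ alongside $p^{n} = a^{2} - c^{2}d_{0}$, i.e.\ \emph{two} positive integer solutions $(x,y)$ of $-d_{0}x^{2} + a^{2} = p^{y}$, and excluding this requires a genuine uniqueness theorem for the generalized Ramanujan--Nagell equation. The paper imports exactly this from Bugeaud--Shorey (Theorem \ref{thm3.4}), supplemented by Cohn's theorem on squares in the Lucas sequence and Leu--Li for $2x^{2}+1 = 3^{y}$ (which is where the sporadic $(p,n)=(3,5)$ exception comes from). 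Your proposal has no substitute for this input, and the assertion that ``the only surviving solution turns out to be $(3,5,1,2)$'' is precisely the statement that needs proving.

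Two further points. First, in case (2) the integers $n_{1}, n_{2}$ need not be odd, so $k$ can be even; your identity ``$s \mid L_{k}(s,t)$ for $k$ odd'' then gives nothing, and the case $k = 2$ must be handled separately (the paper does this by a parity argument on the imaginary part, using that $c$ is odd while $2uv$ is even). Second, in the Wieferich cases your $p$-adic congruence $L_{k}(s,p^{e}) \equiv s^{k} - ks^{k-2}p^{e} \bmod p^{2e}$ only disposes of the subcases $u = \pm 1, \pm 2$ (leading to $q_{1}^{p-1} \equiv 1 \bmod p^{2}$, a contradiction); the subcases $u = \pm q_{1}, \pm 2q_{1}$ again fall back on the Ramanujan--Nagell uniqueness and so are not covered by the congruence argument alone.
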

%%%%%%%%%%%%%%%%%%%%
This theorem follows from our proof that the order of the ideal class containing the prime ideal over $p$ is $n$.
%%%%%%%%%%%%%%%%%%%%
    
To solve Question \ref{q11} by using Theorems \ref{thm12} and \ref{thm12.1},
we need to show that there exist infinitely many imaginary quadratic fields treated in these theorems
whose class number is indivisible by $p$.
As a further question, we give this.
We give some examples of Theorem \ref{thm12}.\\
%%%%%%%%%%%%%%%
~\\
Case: $p = 3$\\
~\\
\begin{tabular}{|c|c|c|}
\hline
$n$ & $\mathbb{Q}(\sqrt{1 - 3^n})$ & ideal \ class \ group \ of \ $\mathbb{Q}(\sqrt{1 - 3^n})$ \\
\hline
&&\\[-14pt]\hline  
$2$ & $\mathbb{Q}(\sqrt{-2})$ & trivial \\ 
\hline   
$4$ & $\mathbb{Q}(\sqrt{-5})$ & $\mathbb{Z}/2\mathbb{Z}$ \\ 
\hline      
$5$ & $\mathbb{Q}(\sqrt{-2})$ & trivial \\ 
\hline
$7$ & $\mathbb{Q}(\sqrt{-2186})$ & $\mathbb{Z}/42\mathbb{Z}$ \\ 
\hline
$8$ & $\mathbb{Q}(\sqrt{-410})$ & $\mathbb{Z}/2\mathbb{Z} \times \mathbb{Z}/8\mathbb{Z}$ \\ 
\hline
$10$ & $\mathbb{Q}(\sqrt{-122})$ & $\mathbb{Z}/10\mathbb{Z}$ \\ 
\hline
$11$ & $\mathbb{Q}(\sqrt{-177146})$ & $\mathbb{Z}/2\mathbb{Z} \times \mathbb{Z}/198\mathbb{Z}$ \\ 
\hline
$13$ & $\mathbb{Q}(\sqrt{-1594322})$ & $\mathbb{Z}/780\mathbb{Z}$\\ 
\hline
$14$ & $\mathbb{Q}(\sqrt{-1195742})$ & $\mathbb{Z}/2\mathbb{Z} \times \mathbb{Z}/322\mathbb{Z}$\\ 
\hline
$16$ & $\mathbb{Q}(\sqrt{-672605})$ & $\mathbb{Z}/2\mathbb{Z} \times \mathbb{Z}/2\mathbb{Z} \times 
\mathbb{Z}/2\mathbb{Z} \times \mathbb{Z}/112\mathbb{Z}$\\ 
\hline
$17$ & $\mathbb{Q}(\sqrt{-129140162})$ & $\mathbb{Z}/2\mathbb{Z} \times \mathbb{Z}/5304\mathbb{Z}$ \\ 
\hline
$19$ & $\mathbb{Q}(\sqrt{-1162261466})$ & $\mathbb{Z}/2\mathbb{Z} \times \mathbb{Z}/16074\mathbb{Z}$ \\ 
\hline
$20$ & $\mathbb{Q}(\sqrt{-72041})$ & $\mathbb{Z}/2\mathbb{Z} \times \mathbb{Z}/140\mathbb{Z}$ \\ 
\hline
\end{tabular}\\
%%%
~\\
We take $n$ as integers greater than $1$ such that $\gcd(3, n) = 1$.
For $n = 2$, $4$, $5$, $8$, $10$, $14$, $16$, $20$, the class number of the imaginary quadratic fields $\mathbb{Q}(\sqrt{1 - 3^n})$
 is indivisible by $3$.
For $n = 7$, $11$, $13$, $17$, $19$, the class number of the imaginary quadratic fields $\mathbb{Q}(\sqrt{1 - 3^n})$
 is divisible by $3$.\\
%%%%%%%%%%%%%%%     
~\\
Case: $p = 5$\\
~\\
\begin{tabular}{|c|c|c|}
\hline
$n$ & $\mathbb{Q}(\sqrt{4 - 5^n})$ & ideal \ class \ group \ of \ $\mathbb{Q}(\sqrt{4 - 5^n})$ \\
\hline
&&\\[-14pt]\hline 
$2$ & $\mathbb{Q}(\sqrt{-21})$ & $\mathbb{Z}/2\mathbb{Z} \times \mathbb{Z}/2\mathbb{Z}$ \\ 
\hline     
\end{tabular}\\
%%%
\begin{tabular}{|c|c|c|}
\hline
$n$ & $\mathbb{Q}(\sqrt{4 - 5^n})$ & ideal \ class \ group \ of \ $\mathbb{Q}(\sqrt{4 - 5^n})$ \\
\hline
&&\\[-14pt]\hline 
$3$ & $\mathbb{Q}(\sqrt{-1})$ & trivial \\ 
\hline
$4$ & $\mathbb{Q}(\sqrt{-69})$ & $\mathbb{Z}/2\mathbb{Z} \times \mathbb{Z}/4\mathbb{Z}$ \\ 
\hline
$6$ & $\mathbb{Q}(\sqrt{-15621})$ & $\mathbb{Z}/2\mathbb{Z} \times \mathbb{Z}/2\mathbb{Z} \times \mathbb{Z}/18\mathbb{Z}$ \\ 
\hline
$7$ & $\mathbb{Q}(\sqrt{-78121})$ & $\mathbb{Z}/168\mathbb{Z}$ \\ 
\hline
$8$ & $\mathbb{Q}(\sqrt{-390621})$ & $\mathbb{Z}/2\mathbb{Z} \times \mathbb{Z}/2\mathbb{Z} \times 
\mathbb{Z}/2\mathbb{Z} \times \mathbb{Z}/8\mathbb{Z} \times \mathbb{Z}/8\mathbb{Z}$ \\ 
\hline
$9$ & $\mathbb{Q}(\sqrt{-1953121})$ & $\mathbb{Z}/2\mathbb{Z} \times \mathbb{Z}/360\mathbb{Z}$ \\ 
\hline
$11$ & $\mathbb{Q}(\sqrt{-48828121})$ & $\mathbb{Z}/2\mathbb{Z} \times \mathbb{Z}/2\mathbb{Z} \times \mathbb{Z}/1188\mathbb{Z}$ \\ 
\hline
$12$ & $\mathbb{Q}(\sqrt{-244140621})$ & $\mathbb{Z}/2\mathbb{Z} \times \mathbb{Z}/2\mathbb{Z} \times \mathbb{Z}/2\mathbb{Z} \times \mathbb{Z}/1620\mathbb{Z}$ \\ 
\hline
$13$ & $\mathbb{Q}(\sqrt{-1220703121})$ & $\mathbb{Z}/2\mathbb{Z} \times \mathbb{Z}/10946\mathbb{Z}$ \\ 
\hline
\end{tabular}\\
%%%
~\\
We take $n$ as integers greater than $1$ such that $\gcd(5, n) = 1$.
For $n = 2$, $3$, $4$, $6$, $7$, $8$, $11$, $13$, the class number of the imaginary quadratic fields $\mathbb{Q}(\sqrt{4 - 5^n})$
 is indivisible by $5$.
For $n = 9$, $12$, the class number of the imaginary quadratic fields $\mathbb{Q}(\sqrt{4 - 5^n})$
 is divisible by $5$.\\
%%%%%%%%%%%%%%%     
~\\
Case: $p = 7$\\
~\\
\begin{tabular}{|c|c|c|}
\hline
$n$ & $\mathbb{Q}(\sqrt{1 - 7^n})$ & ideal \ class \ group \ of \ $\mathbb{Q}(\sqrt{1 - 7^n})$ \\
\hline
&&\\[-14pt]\hline
$2$ & $\mathbb{Q}(\sqrt{-3})$ & trivial \\
\hline
$3$ & $\mathbb{Q}(\sqrt{-38})$ & $\mathbb{Z}/6\mathbb{Z}$ \\
\hline
$4$ & $\mathbb{Q}(\sqrt{-6})$ & $\mathbb{Z}/2\mathbb{Z}$ \\
\hline
$5$ & $\mathbb{Q}(\sqrt{-16806})$ & $\mathbb{Z}/2\mathbb{Z} \times \mathbb{Z}/50\mathbb{Z}$ \\ 
\hline
$6$ & $\mathbb{Q}(\sqrt{-817})$ & $\mathbb{Z}/2\mathbb{Z} \times \mathbb{Z}/6\mathbb{Z}$ \\ 
\hline
$8$ & $\mathbb{Q}(\sqrt{-3603})$ & $\mathbb{Z}/16\mathbb{Z}$ \\ 
\hline
$9$ & $\mathbb{Q}(\sqrt{-4483734})$ & $\mathbb{Z}/2\mathbb{Z} \times \mathbb{Z}/2\mathbb{Z} \times \mathbb{Z}/2\mathbb{Z} \times \mathbb{Z}/234\mathbb{Z}$ \\ 
\hline
$10$ & $\mathbb{Q}(\sqrt{-17654703})$ & $\mathbb{Z}/2\mathbb{Z} \times \mathbb{Z}/2\mathbb{Z} \times \mathbb{Z}/780\mathbb{Z}$ \\ 
\hline
$11$ & $\mathbb{Q}(\sqrt{-1977326742})$ & $\mathbb{Z}/2\mathbb{Z} \times \mathbb{Z}/2\mathbb{Z} \times \mathbb{Z}/9438\mathbb{Z}$ \\ 
\hline
$12$ & $\mathbb{Q}(\sqrt{-3844802})$ & $\mathbb{Z}/2\mathbb{Z} \times \mathbb{Z}/2\mathbb{Z} \times \mathbb{Z}/4\mathbb{Z} \times \mathbb{Z}/96\mathbb{Z}$ \\ 
\hline
\end{tabular}\\
%%%
~\\
We take $n$ as integers greater than $1$ such that $\gcd(7, n) = 1$.
For the above $n$'s, the class number of the imaginary quadratic fields $\mathbb{Q}(\sqrt{1 - 7^n})$
 is indivisible by $7$.     
%%%%%%%%%%%%%%%%%%%%%%%%%%%%%%%%%%%     

This paper is organized as follows.
In Section \ref{sec2}, we give a proof of Theorem \ref{thm4} by using a result of Sands~\cite{Sa93}.
In Section \ref{sec3}, we prove Theorem \ref{thm8} by using some properties of the Fourier coefficients of Cohen's Eisenstein series of half integral weight \cite{Co75}.
In Section \ref{sec4}, we show Theorems \ref{thm14} and \ref{thm12} by using the same method of the proof of Theorem \ref{thm4}
and show Theorems \ref{thm14.1} and \ref{thm12.1} by using a method in the study of divisibility of the class number of imaginary quadratic fields.
%%%%%%%%%%%%%%%%%%%%%%%%%%%%%%%%%%%%%%%%% 
%%%%%%%%%%%%%%%%%%%%%%%%%%%%%%%%%%%%%%%%%
%%%%%%%%%%%%%%%%%%%%%%%%%%%%%%%%%%%%%%%%%     
\section{Proof of Theorem \ref{thm4}} \label{sec2}
In this section, we show Theorem \ref{thm4}.
The method of the proof is based on the one in \cite{By05}.
To check whether the Iwasawa $\lambda$-invariants of the cyclotomic $\mathbb{Z}_p$-extensions of imaginary quadratic fields are equal to $1$, we use the following theorem.
%%%%%%%%%%%%%%%%%%%%%%%%%%%
\begin{thm}[Sands, {[26, Proposition 2.1]}] \label{thm1.1}
Assume that $p$ is an odd prime number and that $p$ splits in the imaginary quadratic field $K$. 
Thus $(p) = \wp \overline{\wp}$, the product of prime ideals of $K$. 
Suppose that $m_1$ is a positive integer not divisible by $p$ such that $\wp^{m_1} = (\xi)$, a principal ideal of $K$. 
Then $\lambda_p(K) > 1$ if and only if either $\xi^{p-1} \equiv 1 \bmod \overline{\wp}^2$ or $p$ divides the class number of $K$.
\end{thm}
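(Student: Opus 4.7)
The plan is to combine Iwasawa's structure theorem for cyclotomic $\mathbb{Z}_p$-extensions with an explicit local class field theory argument at $\overline{\wp}$. Let $K_\infty/K$ denote the cyclotomic $\mathbb{Z}_p$-extension, put $\Gamma = \mathrm{Gal}(K_\infty/K) \cong \mathbb{Z}_p$, fix a topological generator $\gamma$, and set $\Lambda = \mathbb{Z}_p[[T]]$ with $T = \gamma - 1$. Let $L_\infty$ be the maximal unramified abelian pro-$p$-extension of $K_\infty$ and let $X = \mathrm{Gal}(L_\infty/K_\infty)$, viewed as a $\Lambda$-module. Since $K$ is abelian over $\mathbb{Q}$, Ferrero--Washington yields $\mu_p(K) = 0$, so the structure theorem gives a pseudo-isomorphism $X \sim \Lambda/(f(T))$ for a distinguished polynomial $f$ of degree $\lambda_p(K)$. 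Consequently $\lambda_p(K) = 1$ is equivalent to $X$ being pseudo-isomorphic to $\mathbb{Z}_p$, and I would detect this through $X/TX$ together with the linear coefficient of $f$.

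Next, both primes $\wp$ and $\overline{\wp}$ are totally ramified in $K_\infty/K$, and a standard genus-type computation (Iwasawa) produces an exact sequence
\[
\mathbb{Z}_p \longrightarrow X/TX \longrightarrow A_0 \longrightarrow 0,
\]
where $A_0$ is the $p$-part of the ideal class group of $K$ and the left map is induced by the inertia subgroup at, say, $\overline{\wp}$; the inertia at $\wp$ contributes no additional generator because the product of the two inertia images vanishes in the unramified quotient. If $p \mid h(K)$ then $A_0$ is nontrivial, and Nakayama's lemma applied to $X$ shows that $X$ requires at least two generators over $\Lambda$, forcing $\lambda_p(K) \geq 2$. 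So I may assume $p \nmid h(K)$.

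Under $p \nmid h(K)$ the module $X/TX$ is cyclic, generated by the inertia image. Via the Artin reciprocity map for $L_\infty/K$, this image is identified with the class of $\xi$ in the local principal units $U^{(1)}_{\overline{\wp}} = 1 + \overline{\wp}\mathcal{O}_{\overline{\wp}}$ modulo $p$-th powers (the hypothesis $p \nmid m_1$ is precisely what ensures that $\xi$, rather than some $p$-th power of an ideal generator, is the relevant input). Because $K_{\overline{\wp}} \cong \mathbb{Q}_p$, the filtration $U^{(k)} = 1 + \overline{\wp}^k\mathcal{O}_{\overline{\wp}}$ has successive quotients isomorphic to $\mathbb{F}_p$, and the image of $\xi^{p-1}$ (which already lies in $U^{(1)}$ by Fermat) is trivial in $U^{(1)}/U^{(2)} \cong \mathbb{F}_p$ precisely when $\xi^{p-1} \equiv 1 \pmod{\overline{\wp}^2}$. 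This vanishing shrinks $X/TX$, forces an extra $T$-divisibility in $f$, and gives $\lambda_p(K) \geq 2$; conversely, non-vanishing forces $\lambda_p(K) = 1$.

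The main obstacle is the last step: making the Artin-reciprocity identification fully rigorous. One must track carefully how the global element $\xi$ enters the computation through the reciprocity map for $L_\infty/K$, and verify that the congruence class of $\xi^{p-1}$ modulo $\overline{\wp}^2$ is independent both of the particular choice of $m_1$ (coprime to $p$) with $\wp^{m_1}$ principal and of the unit ambiguity in $\xi$, since any replacement alters $\xi$ only by a global unit of $K$ whose $(p-1)$-th power lies in $U^{(2)}$. Each verification is short in isolation, but the interplay with the genus-theoretic exact sequence above is the substantive content, and it is precisely the calculation carried out in Sands' paper (refining earlier work of Gold).
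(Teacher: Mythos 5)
The paper does not prove this statement at all: it is quoted verbatim from Sands [26, Proposition 2.1] and used as a black box (the remark following it only discusses the history, Gold's special case and Sands' improvement). So there is no internal proof to compare against, and your sketch has to stand on its own as a reconstruction of Sands' argument. The framework you set up is the right one --- the module $X$, Ferrero--Washington for $\mu=0$, the genus-theoretic exact sequence $\mathbb{Z}_p \longrightarrow X/TX \longrightarrow A_0 \longrightarrow 0$ coming from the two totally ramified primes, and the filtration $U^{(1)}\supset U^{(2)}$ of the local units at $\overline{\wp}$ --- and your well-definedness checks at the end (independence of $m_1$ and of the unit ambiguity in $\xi$) are correct, since the global units are roots of unity of order prime to $p$ once $p$ splits.

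There are, however, two genuine gaps. First, the Nakayama step is a non sequitur: from $A_0 \neq 0$ and the exact sequence $\mathbb{Z}_p \to X/TX \to A_0 \to 0$ you cannot conclude that $X$ needs two generators over $\Lambda$. Writing $a$ for the image in $X/TX$ of the inertia element, the sequence only says $X/(TX + \Lambda a) \cong A_0$; if $a \in (p,T)X$ this yields no bound beyond $\dim_{\mathbb{F}_p} X/(p,T)X \ge 1$. Concretely, nothing in your outline excludes $X \cong \mathbb{Z}_p$ (hence $\lambda_p(K)=1$, one $\Lambda$-generator) with $T$ acting by some $\pi \in p\mathbb{Z}_p$ and $a \in pX$, in which case $A_0 \cong \mathbb{Z}_p/(\pi,a) \neq 0$. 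Ruling this configuration out requires precisely the class-field-theoretic relation between $a$ and the image of $\xi$ in $U^{(1)}_{\overline{\wp}}$ that you postpone to the final step; the two halves of the criterion ($p \mid h(K)$ versus $\xi^{p-1}\equiv 1 \bmod \overline{\wp}^2$) cannot be handled independently the way your outline suggests. (You would also need to invoke that $X$ has no nonzero finite $\Lambda$-submodules before ``number of $\Lambda$-generators $\le \lambda$'' is legitimate.) Second, the step you yourself flag as ``the main obstacle'' --- identifying the inertia class with $\xi^{p-1}$ in $U^{(1)}/U^{(2)}$ and showing that its vanishing is equivalent to an extra factor of $T$ in the characteristic polynomial --- is asserted, not proved, and you close by saying it ``is precisely the calculation carried out in Sands' paper.'' Deferring the substantive computation to the very result being proved leaves the proposal as an annotated reading plan rather than a proof.
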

%%%%%%%%%%%%%%%%%%%%%%%%%%%
\begin{rem}
We see from Theorem \ref{thm1.1} that $\lambda_p(K) = 1$ 
if and only if $\xi^{p-1} \not \equiv 1 \bmod \overline{\wp}^2$ and $p$ does not divide the class number of $K$.
When $p$ does not divide the class number of $K$,
$\lambda_p(K) = 1$ if and only if $\xi^{p-1} \not \equiv 1 \bmod \overline{\wp}^2$.
Gold proved this special case in \cite{Go74} and Sands improved his result as seen in the above.
To use their necessary and sufficient condition for $\lambda_p(K) = 1$, 
we need to check that $p$ does not divide the class number of $K$.
However, Byeon~\cite{By05} does not verify that 
his constructed imaginary quadratic fields satisfy this class number condition.
\end{rem}
%%%%%%%%%%%%%%%%%%%%%%%%%%%
To use Theorem \ref{thm1.1}, we need the following lemma.
%%%%%%%%%%%%%%%%%%%%%%%%%%%
\begin{lem} \label{lem1.2}
Let $p$ be an odd prime number such that $p \ge e^7$ and $x_1$ a positive integer such that $x_1^2 < p$.
Then, $p \nmid h(\mathbb{Q}(\sqrt{x_1^2 - p}))$. 
\end{lem}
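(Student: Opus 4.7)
The strategy is to prove the stronger claim that $h(K) < p$ where $K := \mathbb{Q}(\sqrt{x_1^2 - p})$, which immediately yields $p \nmid h(K)$. Writing $d_K$ for the discriminant of $K$, the hypothesis $x_1^2 < p$ gives $0 < p - x_1^2 < p$, so $K$ is imaginary quadratic with
$$|d_K| \le 4(p - x_1^2) < 4p.$$

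For the class number bound, I would combine the analytic class number formula
$$h(K) = \frac{w_K \sqrt{|d_K|}}{2\pi}\, L(1, \chi_{d_K})$$
with a classical explicit estimate of the shape $L(1, \chi_{d_K}) \le \tfrac{1}{2}\log|d_K| + C$ for some absolute constant $C$ (such bounds are standard; see, e.g., Davenport's \emph{Multiplicative Number Theory}). Outside the two degenerate cases $d_K \in \{-3, -4\}$, where $h(K) = 1$ and the conclusion is immediate, we have $w_K = 2$, and inserting the above estimate together with the bound $|d_K| < 4p$ gives
$$h(K) \le \frac{\sqrt{|d_K|}}{2\pi}\bigl(\log|d_K| + 2C\bigr) < \frac{\sqrt{p}}{\pi}\bigl(\log(4p) + 2C\bigr).$$

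It then suffices to verify the elementary inequality $\log(4p) + 2C < \pi\sqrt{p}$ for every $p \ge e^7$. At the threshold $p = e^7$ the left-hand side is roughly $8.4 + 2C$ while the right-hand side is $\pi e^{7/2} \approx 104$, leaving ample room, and because $\sqrt{p}$ dominates $\log p$ as $p \to \infty$ the inequality persists for all larger $p$. This forces $h(K) < p$, hence $p \nmid h(K)$.

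The main obstacle is a bookkeeping one: tuning the constant $C$ in the explicit bound on $L(1, \chi_{d_K})$ so that the prescribed threshold $p \ge e^7$, rather than some larger quantity, is already sufficient. Any textbook effective estimate for $L(1, \chi)$ attached to a real primitive Dirichlet character provides a constant well within the margin computed above, so in practice the argument goes through cleanly once one fixes a reference for the $L$-value inequality.
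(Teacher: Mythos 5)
Your proposal is correct and follows essentially the same route as the paper: the analytic class number formula, the bound $|d_K| < 4p$, an explicit upper estimate for $L(1,\chi_{d_K})$ of the form $\tfrac{1}{2}\log|d_K| + (\text{lower order})$ (the paper cites Cohen's $\tfrac{1}{2}\log|d_K| + \log\log|d_K| + 2.8$), and an elementary verification that the resulting bound is below $p$ once $p \ge e^7$. The numerical margin you compute ($8.4 + 2C$ versus roughly $104$ at the threshold) comfortably absorbs any standard choice of the constant, so the argument goes through exactly as in the paper.
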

%%%%%%%%%%%%%%%%%%%%%%%%%%%
\begin{proof}
Let $K$ be an imaginary quadratic field.
The class number formula of imaginary quadratic fields is as follows:
$$h(K) = \frac{\omega_K\sqrt{| D_K |}}{2\pi}L(1, \chi_{D_K}),$$
where $\omega_K$ denotes the number of roots of unity in $K$,
$D_K$ denotes the fundamental discriminant of $K$, and $L(s, \chi_{D_K})$ denotes the Dirichlet $L$-function.
We substitute $K = \mathbb{Q}(\sqrt{x_1^2 - p})$ in this formula.
Since $p \nmid h(\mathbb{Q}(\sqrt{-1})) = 1$ and $p \nmid h(\mathbb{Q}(\sqrt{-3})) = 1$ hold, 
we may assume $\mathbb{Q}(\sqrt{x_1^2 - p}) \neq \mathbb{Q}(\sqrt{-1})$, $\mathbb{Q}(\sqrt{-3})$,
that is, we may assume $\omega_{\mathbb{Q}(\sqrt{x_1^2 - p})} = 2$.  
Substituting $\omega_{\mathbb{Q}(\sqrt{x_1^2 - p})} = 2$ in the above class number formula, we have
$$h(\mathbb{Q}(\sqrt{x_1^2 - p})) = \frac{\sqrt{| D_{\mathbb{Q}(\sqrt{x_1^2 - p})} |}}{\pi}L(1, \chi_{D_{\mathbb{Q}(\sqrt{x_1^2 - p})}}).$$
From
$$| D_{\mathbb{Q}(\sqrt{x_1^2 - p})} | \le 4 | x_1^2 - p | = 4(p - x_1^2) < 4p,$$
we see
$$h(\mathbb{Q}(\sqrt{x_1^2 - p})) < \frac{2\sqrt{p}}{\pi}L(1, \chi_{D_{\mathbb{Q}(\sqrt{x_1^2 - p})}}).$$
The value of $L(1, \chi_{D_K})$ satisfies the inequality
$$L(1, \chi_{D_K}) \le \frac{1}{2}\log | D_K | + \log \log | D_K | + 2.8$$
(see \cite[Proposition 10.3.16]{Co}).
Using this inequality, we obtain
\begin{equation} \label{eq1.1}
 \begin{split}
\frac{2\sqrt{p}}{\pi}L(1, &\chi_{D_{\mathbb{Q}(\sqrt{x_1^2 - p})}}) \\
                           &\le \frac{2\sqrt{p}}{\pi}\biggl\{\frac{1}{2}\log | D_{\mathbb{Q}(\sqrt{x_1^2 - p})} | + \log \log | D_{\mathbb{Q}(\sqrt{x_1^2 - p})} | + 2.8\biggr\} \\  
                           &< \frac{2\sqrt{p}}{\pi}\biggl\{\frac{1}{2}\log 4p + \log (\log 4p) + 2.8\biggr\} \\
                           &= \frac{2\sqrt{p}}{\pi}\biggl\{\frac{1}{2}\cdot 2\log 2 + \frac{1}{2}\log p + \log (2\log 2 + \log p) + 2.8\biggr\} \\ 
                           &= \frac{2\sqrt{p}}{\pi}\biggl\{\frac{1}{2}\log p + (\log 2 +  \log (2\log 2 + \log p) + 2.8)\biggr\} \\  
                           &< \frac{2\sqrt{p}}{\pi}\biggl\{\frac{1}{2}\log p + ( 4 +  \log (2\log 2 + \log p))\biggr\}. \\                                                                                                         
 \end{split}
\end{equation}                       
When $p \ge e^7$, the inequality
$$4 +  \log (2\log 2 + \log p) < \log p$$
holds.
In fact, we can show this as follows.
Let
$$f_1(X) := \log X - 4 - \log (2\log 2 + \log X).$$
From 
$$f_1^{'}(X) = \frac{1}{X} - \frac{1 / X}{2\log 2 + \log X} = \frac{1}{X}\biggl(\frac{2\log 2 + \log X - 1}{2\log 2 + \log X}\biggr),$$
we see that $f_1^{'}(X) > 0$ when $X \ge e$.
Then, $f_1(X)$ increases monotonously when $X \ge e$. 
When $X = e^7$, we have
\begin{equation}
 \begin{split}
f_1(e^7) &= \log e^7 - 4 - \log (2\log 2 + \log e^7) \notag \\
           &= 7 - 4 - \log (2\log 2 + 7) \notag \\
           &= 3 - \log (2\log 2 + 7) \notag \\
           &> 3 - \log (2 + 7) = 3 - \log 9 = \log \frac{e^3}{9} > 0. \notag \\                                                                                                   
 \end{split}
\end{equation}                       
Then, $f_1(X) > 0$ when $X \ge e^7$, that is, 
$$\log X > 4 + \log (2\log 2 + \log X)$$
 when $X \ge e^7$.
Substituting this in equation (\ref{eq1.1}), we see
\begin{equation} \label{eq1.2}
 \begin{split}
\frac{2\sqrt{p}}{\pi}L(1, \chi_{D_{\mathbb{Q}(\sqrt{x_1^2 - p})}}) &< \frac{2\sqrt{p}}{\pi}\biggl(\frac{1}{2}\log p + \log p\biggr) \\
                           &= \frac{2\sqrt{p}}{\pi} \frac{3}{2} \log p = \frac{3\sqrt{p}}{\pi} \log p \\                                                                                                           
 \end{split}
\end{equation}                       
when $p \ge e^7$.    
When $p \ge e^2$, the inequality $\log p < \sqrt{p}$ holds.
In fact, we can check this as follows.
Let $f_2(X) := \sqrt{X} - \log X$.
Since
$$f_2^{'}(X) = \frac{1}{2}X^{-\frac{1}{2}} - \frac{1}{X} = \frac{1}{2\sqrt{X}} - \frac{1}{X} = \frac{\sqrt{X} - 2}{2X}$$
holds, we see $f_2^{'}(X) > 0$ when $X > 4$.
Then, $f_2(X)$ increases monotonously when $X > 4$.  
From 
$$f_2(e^2) = \sqrt{e^2} - \log e^2 = e - 2 > 0,$$
$f_2(X) > 0$ holds when $X \ge e^2$.
Hence, $\log p < \sqrt{p}$ when $p \ge e^2$.                           
Substituting this in equation (\ref{eq1.2}), we have
\begin{equation}
 \begin{split}
\frac{2\sqrt{p}}{\pi}L(1, \chi_{D_{\mathbb{Q}(\sqrt{x_1^2 - p})}}) &< \frac{3\sqrt{p}}{\pi} \log p \notag \\    
                                                                                                           &< \frac{3\sqrt{p}}{\pi} \sqrt{p} = \frac{3p}{\pi} < p \notag \\   
 \end{split}
\end{equation}                       
when $p \ge e^7$.                           
This implies that $h(\mathbb{Q}(\sqrt{x_1^2 - p})) < p$ when $p \ge e^7$, that is, $p \nmid h(\mathbb{Q}(\sqrt{x_1^2 - p}))$ when $p \ge e^7$.
The proof of Lemma \ref{lem1.2} is completed.
\end{proof}
%%%%%%%%%%%%%%%%%%%%%%%%%%%     
Using Kash program, we can check that the class numbers of the imaginary quadratic fields $\mathbb{Q}(\sqrt{1 - p})$ and $\mathbb{Q}(\sqrt{4 - p})$
are not divisible by $p$ when $3 < p < e^7$.
From this and Lemma \ref{lem1.2}, we obtain the following lemma.
%%%%%%%%%%%%%%%%%%%%%%%%%%%
\begin{lem} \label{lem1.3}
Let $p$ be an odd prime number greater than $3$.
Then, the class numbers of the imaginary quadratic fields $\mathbb{Q}(\sqrt{1 - p})$ and $\mathbb{Q}(\sqrt{4 - p})$
are not divisible by $p$. 
\end{lem}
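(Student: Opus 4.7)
The plan is to reduce the statement to the combination of Lemma \ref{lem1.2} and a finite computation. First I would observe that for $p > 3$ we automatically have $1^2 < p$ and $2^2 = 4 < p$ (the latter holding because $p \ge 5$), so we are allowed to take $x_1 = 1$ and $x_1 = 2$ in Lemma \ref{lem1.2}. Consequently, for every odd prime $p$ with $p \ge e^7$, Lemma \ref{lem1.2} applied with $x_1 = 1$ yields $p \nmid h(\mathbb{Q}(\sqrt{1-p}))$, and applied with $x_1 = 2$ yields $p \nmid h(\mathbb{Q}(\sqrt{4-p}))$.

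For the remaining range $3 < p < e^7$ (i.e.\ the finitely many odd primes $5 \le p \le 1097$), I would appeal to a direct computer-algebra verification, as indicated by the authors: using Kash (or any comparable system), one lists all odd primes in this interval, computes the class numbers $h(\mathbb{Q}(\sqrt{1-p}))$ and $h(\mathbb{Q}(\sqrt{4-p}))$, and checks that none is divisible by the corresponding $p$. Combining the two ranges immediately yields the lemma.

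The main (minor) obstacle is to be careful that Lemma \ref{lem1.2} is genuinely applicable, i.e.\ that the excluded cases $\mathbb{Q}(\sqrt{-1})$ and $\mathbb{Q}(\sqrt{-3})$ in its proof do not interfere; but these arise only when $x_1^2 - p \in \{-1,-3\}$ up to square factors, which for $x_1 \in \{1,2\}$ and $p \ge e^7$ is impossible. The only real ``work'' is the finite computation for $3 < p < e^7$, and this is routine given the modest size of the range and the efficient class number algorithms available. No further theoretical input beyond Lemma \ref{lem1.2} is needed.
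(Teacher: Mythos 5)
Your proposal is correct and is essentially identical to the paper's own argument: Lemma \ref{lem1.2} with $x_1 = 1$ and $x_1 = 2$ covers all $p \ge e^7$, and the finitely many primes with $3 < p < e^7$ are checked by a Kash computation. (The only quibble is that $e^7 \approx 1096.6$, so the finite range of primes is $5 \le p \le 1093$ rather than up to $1097$, but this is immaterial.)
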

%%%%%%%%%%%%%%%%%%%%%%%%%%%
We show Theorem \ref{thm4} by using Lemma \ref{lem1.3}.\\
%%%%%%%%%%%%%%%%%%%%%%%%%%%
~\\
{\bf Proof of Theorem \ref{thm4}.} \ \ First, we treat $\mathbb{Q}(\sqrt{1 - p})$.
Suppose $\lambda_p(\mathbb{Q}(\sqrt{1 - p})) > 1$.
We can write
$$\wp_1\overline{\wp_1} = (p) = (1 + \sqrt{1 - p})(1 - \sqrt{1 - p})$$
in $\mathbb{Q}(\sqrt{1 - p})$,
where $\wp_1$ denotes the prime ideal of $\mathbb{Q}(\sqrt{1 - p})$ over $p$ and $\overline{\wp_1}$ denotes the complex conjugate of $\wp_1$.
Since $p \nmid (1 + \sqrt{1 - p})$ and $p \nmid (1 - \sqrt{1 - p})$ hold, we may assume $\wp_1 = (1 + \sqrt{1 - p})$ and $\overline{\wp_1} = (1 - \sqrt{1 - p})$.
Then, $\wp_1^2$ is a principal ideal.
Hence it follows from Theorem \ref{thm1.1} and Lemma \ref{lem1.3} that 
$\lambda_p(\mathbb{Q}(\sqrt{1 - p})) > 1$ if and only if
$$((1 + \sqrt{1 - p})^2)^{p-1} \equiv 1 \bmod \overline{\wp_1}^2.$$
This is equivalent to
$$(1 + \sqrt{1 - p})^{2(p-1)}(1 + \sqrt{1 - p})^2 - (1 + \sqrt{1 - p})^2 \equiv 0 \bmod \wp_1^2\overline{\wp_1}^2,$$
that is, 
\begin{equation} \label{eq1.3}
(1 + \sqrt{1 - p})^{2p} - (1 + \sqrt{1 - p})^2 \equiv 0 \bmod p^2.
\end{equation}
Expanding the left side of equation (\ref{eq1.3}), we obtain
\begin{equation} \label{eq1.4}
 \begin{split}
&(1 + \sqrt{1 - p})^{2p} - (1 + \sqrt{1 - p})^2 \\
= &\biggl\{\sum_{j = 0}^p \binom{2p}{2j}(1 - p)^j\biggr\} + \sqrt{1 - p} \biggl\{\sum_{j = 0}^{p-1} \binom{2p}{2j+1}(1 - p)^j\biggr\} \\
&- (2 - p + 2\sqrt{1 - p}). \\
 \end{split}
\end{equation}                       
Note that
\begin{equation}
 \begin{split}
(1 - p)^j &= \sum_{i = 0}^j \binom{j}{i}(-p)^i = 1 + \binom{j}{1}(-p) + \sum_{i = 2}^j \binom{j}{i}(-p)^i \notag \\
               &\equiv 1 - jp \bmod p^2. \notag \\
 \end{split}
\end{equation}                       
Substituting this in equation (\ref{eq1.4}), we see
\begin{equation}
 \begin{split}
&(1 + \sqrt{1 - p})^{2p} - (1 + \sqrt{1 - p})^2 \notag \\
\equiv &\biggl\{\sum_{j = 0}^p \binom{2p}{2j}(1 - jp)\biggr\} + \sqrt{1 - p} \biggl\{\sum_{j = 0}^{p-1} \binom{2p}{2j+1}(1 - jp)\biggr\}  \notag \\
&- (2 - p + 2\sqrt{1 - p})  \notag \\
\equiv &\sum_{j = 0}^p \binom{2p}{2j} - p \sum_{j = 0}^p \binom{2p}{2j}j + \sqrt{1 - p} \biggl\{\sum_{j = 0}^{p-1} \binom{2p}{2j+1} - p\sum_{j = 0}^{p-1} \binom{2p}{2j+1}j  
\biggr\} \notag \\
& - (2 - p + 2\sqrt{1 - p}) \bmod p^2. 
 \end{split}
\end{equation}                       
Since
$$\sum_{j = 0}^p \binom{2p}{2j} = \sum_{j = 0}^{p-1} \binom{2p}{2j+1} = 2^{2p-1},\hspace{10pt}\sum_{j = 0}^p \binom{2p}{2j} j = 2^{2p-2}p,$$
and
$$\sum_{j = 0}^{p-1} \binom{2p}{2j+1} j = 2^{2p-2}(p - 1)$$
hold, we have
\begin{equation}
 \begin{split}
&(1 + \sqrt{1 - p})^{2p} - (1 + \sqrt{1 - p})^2 \notag \\
\equiv &\hspace{5pt}2^{2p-1} - 2^{2p-2}p^2 + \sqrt{1 - p} \{2^{2p-1} - 2^{2p-2}p(p - 1)\} - (2 - p + 2\sqrt{1 - p}) \notag \\
\equiv &\hspace{5pt}2^{2p-1} + \sqrt{1 - p} (2^{2p-1} + 2^{2p-2}p) - (2 - p + 2\sqrt{1 - p}) \notag \\
\equiv &\hspace{5pt}(2^{2p-1} - 2 + p) + \sqrt{1 - p} (2^{2p-1} + 2^{2p-2}p - 2) \bmod p^2. 
 \end{split}
\end{equation}                       
From this and equation (\ref{eq1.3}), we obtain
$$2^{2p-1} - 2 + p \equiv 0 \bmod p^2$$
and
$$2^{2p-1} + 2^{2p-2}p - 2 \equiv 0 \bmod p^2.$$
This is equivalent to $2^{2p-1} - 2 + p \equiv 0 \bmod p^2$.
In fact, when $2^{2p-1} \equiv 2 - p \bmod p^2$, we see
\begin{equation}
 \begin{split}
2^{2p-1} + 2^{2p-2}p - 2 &\equiv 2 - p + 2^{2p-2}p - 2 \notag \\ 
&\equiv - p + 2^{2p-2}p \equiv p(2^{2p-2} - 1) \bmod p^2. \notag \\
 \end{split}
\end{equation}                       
Since $2^{2p-2} - 1 \equiv 0 \bmod p$ holds,
we have 
$$2^{2p-1} + 2^{2p-2}p - 2 \equiv 0 \bmod p^2.$$
Then, $\lambda_p(\mathbb{Q}(\sqrt{1 - p})) > 1$ if and only if
$$2^{2p-1} - 2 + p \equiv 0 \bmod p^2.$$
Secondly, we treat $\mathbb{Q}(\sqrt{4 - p})$.
We can write
$$\wp_2\overline{\wp_2} = (p) = (2 + \sqrt{4 - p})(2 - \sqrt{4 - p})$$
in $\mathbb{Q}(\sqrt{4 - p})$,
where $\wp_2$ denotes the prime ideal of $\mathbb{Q}(\sqrt{4 - p})$ over $p$ and $\overline{\wp_2}$ denotes the complex conjugate of $\wp_2$.
Since $p \nmid (2 + \sqrt{4 - p})$ and $p \nmid (2 - \sqrt{4 - p})$ hold, we may assume $\wp_2 = (2 + \sqrt{4 - p})$ and $\overline{\wp_2} = (2 - \sqrt{4 - p})$.
Then, $\wp_2^2$ is a principal ideal.
Hence it follows from Theorem \ref{thm1.1} and Lemma \ref{lem1.3} that 
$\lambda_p(\mathbb{Q}(\sqrt{4 - p})) = 1$ if and only if
\begin{equation} \label{eq1.5}
((2 + \sqrt{4 - p})^2)^{p-1} \not \equiv 1 \bmod \overline{\wp_2}^2.
\end{equation}                       
This is equivalent to
$$2^{4p-1} - 2^3 + p \not \equiv 0 \bmod p^2$$
or
$$2^{4p-2} + 2^{4p-5}p - 2^2 \not \equiv 0 \bmod p^2.$$
We can check this as follows.
Equation (\ref{eq1.5}) is equivalent to
$$(2 + \sqrt{4 - p})^{2(p-1)}(2 + \sqrt{4 - p})^2 - (2 + \sqrt{4 - p})^2 \not \equiv 0 \bmod \wp_2^2\overline{\wp_2}^2,$$
that is, 
\begin{equation} \label{eq1.6}
(2 + \sqrt{4 - p})^{2p} - (2 + \sqrt{4 - p})^2 \not \equiv 0 \bmod p^2.
\end{equation}
Expanding the left side of equation (\ref{eq1.6}), we obtain
\begin{equation}
 \begin{split}
&(2 + \sqrt{4 - p})^{2p} - (2 + \sqrt{4 - p})^2 \notag \\ 
= &\sum_{j=0}^p \binom{2p}{2j} (4 - p)^j 2^{2p-2j} + \sqrt{4 - p} \biggl\{\sum_{j=0}^{p-1} \binom{2p}{2j+1}(4 - p)^j 2^{2p-(2j+1)}\biggr\} \notag \\
   &- (4 + 4\sqrt{4 - p} + 4 - p) \notag \\
\equiv &\sum_{j=0}^p \binom{2p}{2j} (4^j - 4^{j-1}jp)2^{2p-2j} \notag \\
   &+ \sqrt{4 - p} \biggl\{\sum_{j=0}^{p-1} \binom{2p}{2j+1} (4^j - 4^{j-1}jp)2^{2p-2j-1}\biggr\} - (8 - p + 4\sqrt{4 - p}) \notag \\   
 \end{split}
\end{equation}                       
\begin{equation}
 \begin{split}   
\equiv &\sum_{j=0}^p \binom{2p}{2j} (2^{2p} - 2^{2p-2}jp) + \sqrt{4 - p} \biggl\{\sum_{j=0}^{p-1} \binom{2p}{2j+1} (2^{2p-1} - 2^{2p-3}jp)\biggr\} \notag \\
   &- (8 - p + 4\sqrt{4 - p}) \notag \\    
\equiv &\hspace{5pt}2^{2p} \sum_{j=0}^p \binom{2p}{2j} - 2^{2p-2}p \sum_{j=0}^p j \binom{2p}{2j} \notag \\
   &+ \sqrt{4 - p} \biggl\{2^{2p-1} \sum_{j=0}^{p-1} \binom{2p}{2j+1} - 2^{2p-3}p \sum_{j=0}^{p-1} j \binom{2p}{2j+1}\biggr\} \notag \\
   &- (8 - p + 4\sqrt{4 - p}) \notag \\
\equiv &\hspace{5pt}2^{2p}2^{2p-1} - 2^{2p-2}p \cdot 2^{2p-2}p \notag \\
   &+ \sqrt{4 - p} \{2^{2p-1}2^{2p-1} - 2^{2p-3}p (p - 1) 2^{2p-2}\} - 8 + p -4\sqrt{4 - p} \notag \\
\equiv &\hspace{5pt}2^{4p-1} + \sqrt{4 - p} \{2^{4p-2} - (-1)2^{2p-3}2^{2p-2}p\} - 8 + p -4\sqrt{4 - p} \notag \\
\equiv &\hspace{5pt}(2^{4p-1} - 8 + p) + \sqrt{4 - p} (2^{4p-2} + 2^{4p-5}p - 4) \notag \\
\not \equiv &\hspace{5pt}0 \bmod p^2. \notag \\
 \end{split}
\end{equation}                       
Then,
$$2^{4p-1} - 2^3 + p \not \equiv 0 \bmod p^2$$
or
$$2^{4p-2} + 2^{4p-5}p - 2^2 \not \equiv 0 \bmod p^2.$$
On the other hand, the congruence relation
$$2^{2p-1} \equiv 2 - p \bmod p^2$$
is satisfied under the assumption $\lambda_p(\mathbb{Q}(\sqrt{1 - p})) > 1$. 
Substituting this in $2^{4p-1} - 2^3 + p$, we have
\begin{equation}
 \begin{split}
2^{4p-1} - 2^3 + p &\equiv 2^{2p-1}2^{2p-1}2 - 8 + p \equiv 2(2 - p)^2 - 8 + p \notag \\
                                 &\equiv 2(p^2 - 4p + 4) - 8 + p \equiv 2p^2 - 8p + 8 - 8 + p \notag \\
                                 &\equiv 2p^2 - 7p \equiv -7p \bmod p^2. \notag \\
 \end{split}
\end{equation}                       
When $p \neq 7$, we find
$$2^{4p-1} - 2^3 + p \not \equiv 0 \bmod p^2.$$
Therefore, if $p \neq 7$ and $\lambda_p(\mathbb{Q}(\sqrt{1 - p})) > 1$, then $\lambda_p(\mathbb{Q}(\sqrt{4 - p})) = 1$.
When $p = 7$, we see
$$2^{2p-1} - 2 + p = 2^{13} + 5 = 8197 \not \equiv 0 \bmod 7^2.$$
Then, $\lambda_7(\mathbb{Q}(\sqrt{1 - 7})) = \lambda_7(\mathbb{Q}(\sqrt{-6})) = 1$.
The proof of Theorem \ref{thm4} is completed.
%%%%%%%%%%%%%%%%%%%%%%%%%%%%%%%%%%%%%%
%%%%%%%%%%%%%%%%%%%%%%%%%%%%%%%%%%%%%%
\section{Proof of Theorem \ref{thm8}} \label{sec3} 
In this section, we show Theorem  \ref{thm8}.
The method of the proof is based on the one in \cite{By05}.
We use some properties of the Fourier coefficients of Eisenstein series of half integral weight constructed by H.~Cohen~\cite{Co75}.
The idea of the proof is used widely in the study of indivisibility of the class number of quadratic fields (cf. W.~Kohnen and K.~Ono~\cite{KO99}, K.~Ono~\cite{On99}, D.~Byeon~[2, 3, 4], I.~Kimura~\cite{Kim03}, etc).
%%%%%%%%%%%%%%
First, we sketch the outline of the proof.
To check whether the Iwasawa $\lambda$-invariants of the cyclotomic $\mathbb{Z}_p$-extensions of imaginary quadratic fields are equal to $1$, we use the following proposition.
%%%%%%%%%%%%%%     
\begin{meidai}[cf. {[5, Proposition 2.3]}] \label{prp2.1}
Let $p$ be an odd prime number and $D$ the fundamental discriminant of an imaginary quadratic field $\mathbb{Q}(\sqrt{D})$ such that $\chi_D(p) = 1$.
Then, $L(1-p, \chi_D) / p$ is $p${\text -}integral.
Furthermore,
$$\lambda_p(\mathbb{Q}(\sqrt{D})) = 1$$
if and only if 
$$\frac{L(1-p, \chi_D)}{p} \not \equiv 0 \bmod p,$$
where $L(s, \chi_D)$ denotes the Dirichlet $L$-function.
\end{meidai}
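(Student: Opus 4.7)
The plan is to interpret $\lambda_p(\mathbb{Q}(\sqrt{D}))$ as the $\lambda$-invariant of the Iwasawa power series attached to the Kubota--Leopoldt $p$-adic $L$-function of the even character $\chi_D\omega$, where $\omega$ denotes the Teichm\"uller character modulo $p$. Let $\kappa = 1+p$ and let $G(T) \in \mathbb{Z}_p[[T]]$ be the power series characterized by $G(\kappa^s - 1) = L_p(s, \chi_D\omega)$ for all $s \in \mathbb{Z}_p$. By the theorems of Iwasawa and Ferrero--Washington, $G$ has $\mu$-invariant equal to $0$, and its $\lambda$-invariant coincides with $\lambda_p(\mathbb{Q}(\sqrt{D}))$. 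Weierstrass preparation then gives a factorization $G(T) = P(T)U(T)$ in which $U(T)$ is a unit of $\mathbb{Z}_p[[T]]$ and $P(T)$ is a distinguished polynomial of degree $\lambda := \lambda_p(\mathbb{Q}(\sqrt{D}))$.

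Next I would use the interpolation formula $L_p(1-n, \chi_D\omega) = -(1 - \chi_D\omega^{1-n}(p) p^{n-1}) B_{n, \chi_D\omega^{1-n}}/n$ at $n = 1$ and $n = p$. At $n = 1$ the Euler factor becomes $1 - \chi_D(p) = 0$, so $G(0) = L_p(0, \chi_D\omega) = 0$; in particular $\lambda \geq 1$ and the constant term of $P$ vanishes. At $n = p$ the relation $\omega^{1-p} = 1$ gives $L_p(1-p, \chi_D\omega) = (1 - p^{p-1})L(1-p, \chi_D)$, and $1 - p^{p-1}$ is a unit of $\mathbb{Z}_p$. Combining this with the Kummer congruence $L_p(0, \chi_D\omega) \equiv L_p(1-p, \chi_D\omega) \pmod{p}$ (which holds because $1 \equiv p \pmod{p-1}$) yields $L(1-p, \chi_D) \equiv 0 \pmod{p}$, proving the $p$-integrality of $L(1-p, \chi_D)/p$.

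For the equivalence, the key is to evaluate $G$ at $T_0 := \kappa^{1-p} - 1$. A direct binomial expansion gives $T_0 \equiv p \pmod{p^2}$, so $v_p(T_0) = 1$. Because $G(0) = 0$ forces the constant term of $P$ to vanish, I factor $P(T) = T \cdot Q(T)$ where $Q(T) \in \mathbb{Z}_p[T]$ is a distinguished polynomial of degree $\lambda - 1$. When $\lambda = 1$ we have $Q = 1$, hence $v_p(P(T_0)) = 1$. When $\lambda \geq 2$, every non-leading coefficient of $Q$ lies in $p\mathbb{Z}_p$, and since $T_0 \in p\mathbb{Z}_p$ each monomial of $Q(T_0)$ lies in $p\mathbb{Z}_p$, so $v_p(Q(T_0)) \geq 1$ and hence $v_p(P(T_0)) \geq 2$. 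Since $U(T_0)$ is a unit and $G(T_0) = (1 - p^{p-1})L(1-p, \chi_D)$ equals $L(1-p, \chi_D)$ times a unit, we conclude that $v_p(L(1-p, \chi_D)) = 1$ exactly when $\lambda = 1$, which is the claimed equivalence.

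The main obstacle I anticipate is the identification $\lambda_p(\mathbb{Q}(\sqrt{D})) = \lambda(G)$, which rests on Iwasawa's theorem relating the characteristic ideal of the minus part of the Iwasawa module over $K$ to the $p$-adic $L$-function of $\chi_D$; this must be invoked as a black box. The remaining work is a careful bookkeeping of $p$-adic valuations via Weierstrass preparation together with the standard Kummer congruence for $L_p$, both of which are routine once the setup is in place.
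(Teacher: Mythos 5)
Your argument is correct and follows essentially the same route the paper intends: the paper gives no proof of Proposition 3.1, attributing it to Byeon and citing exactly the ingredients you use, namely the identification of $\lambda_p(\mathbb{Q}(\sqrt{D}))$ with the $\lambda$-invariant of the Iwasawa power series for $L_p(s,\chi_D\omega)$, the interpolation formula with its trivial zero at $s=0$ coming from $\chi_D(p)=1$, and the valuation bookkeeping via Weierstrass preparation as in Washington's lemma on zeros of $p$-adic $L$-functions and Proposition 5.1 of Dummit--Ford--Kisilevsky--Sands. Your write-up supplies the details the paper leaves to the references.
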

%%%%%%%%%%%%%%%
To show this, we need some properties of the Kubota-Leopoldt $p$-adic $L$-function \cite[Lemma 1]{Wa82} and \cite[Proposition 5.1]{DFKS91}.
We can check from Proposition \ref{prp2.1} whether $\lambda_p(\mathbb{Q}(\sqrt{D})) = 1$ or not 
by studying the congruence modulo $p^2$ of the value of $L(1-p, \chi_D)$.
We can consider the value of $L(1-p, \chi_D)$ as a Fourier coefficient of Eisenstein series of half integral weight constructed by Cohen~\cite{Co75}.
Here, we state the definition of this Eisenstein series.     
%%%%%%%%%%%%%%%%
     
Let $\mathfrak{r}$ be an integer greater than $1$ and $N$ a non-negative integer.
If $(-1)^{\mathfrak{r}}N \not \equiv 0, 1 \bmod 4$, then let $H(\mathfrak{r}, N) := 0$.
If $N = 0$, then let $H(\mathfrak{r}, 0) := \zeta(1 - 2\mathfrak{r}) = - B_{2\mathfrak{r}} / 2\mathfrak{r}$, where $\zeta(s)$ denotes the Riemann zeta-function and $B_{\mathfrak{r}}$ denotes the Bernoulli numbers.
If $N$ is a positive integer and $(-1)^{\mathfrak{r}}N = Dm^2$ where $D$ is the fundamental discriminant of a quadratic field $\mathbb{Q}(\sqrt{D})$ and $m$ is a positive integer, 
then define $H(\mathfrak{r}, N)$ by
$$H(\mathfrak{r}, N) := L(1 - \mathfrak{r}, \chi_D)\sum_{d | m} \mu(d)\chi_D(d)d^{\mathfrak{r}-1}\sigma_{2\mathfrak{r}-1}(m / d),$$
where $\mu (\cdot )$ denotes the M\"{o}bius function and $\sigma_{\nu}(\cdot )$ denotes the divisor function $\sigma_{\nu}(m) := \sum_{d \mid m} d^{\nu}$.
Let $M_k({\mit \Gamma}_{0} (N_1), \chi)$ denote the space of modular forms of weight $k$ on the congruence subgroup ${\mit \Gamma}_{0}(N_1)$ 
with a Dirichlet character $\chi$.
Cohen proved the following proposition.
%%%%%%%%%%%%%%%
\begin{meidai}[Cohen, \cite{Co75}] \label{prp2.2}
If $F_{\mathfrak{r}}(z) := \sum_{N = 0}^{\infty} H(\mathfrak{r}, N)q^N$,
then $F_{\mathfrak{r}}(z) \in$\\
$M_{\mathfrak{r} + \frac{1}{2}}({\mit \Gamma}_{0} (4), \chi_0)$,
where $q := e^{2\pi iz}$ and $\chi_0$ denotes the trivial character modulo $4$.
\end{meidai}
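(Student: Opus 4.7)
The plan is to realize $F_{\mathfrak{r}}(z)$ as the holomorphic Eisenstein series of weight $\mathfrak{r}+\tfrac{1}{2}$ on $\Gamma_0(4)$ and to verify its Fourier expansion matches the definition of $H(\mathfrak{r},N)$ term-by-term. I would invoke Shimura's half-integral weight formalism, using the theta multiplier $j(\gamma,z)$ arising from Jacobi's theta function $\theta(z)=\sum_{n\in\mathbb{Z}}q^{n^2}$, and form
$$E(z,s) \;=\; \sum_{\gamma\in \Gamma_\infty\backslash\Gamma_0(4)} j(\gamma,z)^{-(2\mathfrak{r}+1)}\,\mathrm{Im}(\gamma z)^s.$$
Transformation under $\Gamma_0(4)$ with trivial character modulo $4$ is then built into the definition, so the substance is to compute the Fourier expansion at the cusp $\infty$ and specialize to $s=0$.

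Next I would compute this Fourier expansion by writing the coset sum in standard form and applying Poisson summation to the inner sum over $b \bmod c$. This produces a constant term that analytically continues to $\zeta(1-2\mathfrak{r}) = H(\mathfrak{r},0)$, together with $N$-th coefficients built from quadratic Gauss sums of the form $\sum_{d\bmod c}\bigl(\tfrac{d}{c}\bigr)\varepsilon_d^{-(2\mathfrak{r}+1)}e^{2\pi iNd/c}$. The parity obstruction ``$(-1)^\mathfrak{r}N \not\equiv 0,1\pmod 4 \Rightarrow H(\mathfrak{r},N)=0$'' should fall out from the vanishing of these Gauss sums in the exceptional residue class.

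When $(-1)^\mathfrak{r}N = Dm^2$ with $D$ a fundamental discriminant, I would evaluate the Gauss sums by classical formulas and collect the contributions over all $c\ge 1$. The resulting Dirichlet series should decompose via Euler products into $L(1-\mathfrak{r},\chi_D)$ times local factors at each prime dividing $m$; multiplicative bookkeeping then reassembles these local factors into the convolution $\sum_{d\mid m}\mu(d)\chi_D(d)d^{\mathfrak{r}-1}\sigma_{2\mathfrak{r}-1}(m/d)$. Finally, using $\mathfrak{r}\ge 2$, I would confirm holomorphy at $s=0$ (the continued Eisenstein series has no pole and the non-holomorphic contributions vanish) and moderate growth at the remaining cusps $0$ and $1/2$, concluding $F_\mathfrak{r}\in M_{\mathfrak{r}+1/2}(\Gamma_0(4),\chi_0)$.

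The main obstacle is the penultimate step: producing the closed-form Gauss-sum evaluations and reorganising the resulting multiplicative double sum over $c$ and the divisors of $m$ into precisely the stated divisor convolution. One must match Euler factors prime-by-prime, and in particular handle primes dividing the conductor of $\chi_D$ separately from those only dividing $m$, since the Gauss-sum evaluations and the Mobius-twisted divisor identity behave differently in the two regimes. Getting this factor structure to collapse exactly onto $L(1-\mathfrak{r},\chi_D)\sum_{d\mid m}\mu(d)\chi_D(d)d^{\mathfrak{r}-1}\sigma_{2\mathfrak{r}-1}(m/d)$ is where the real calculation lives.
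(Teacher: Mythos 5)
The paper offers no proof of this proposition at all: it is quoted verbatim from Cohen's paper \cite{Co75} and used as a black box, so the only meaningful comparison is with Cohen's own argument, whose general shape (realize $F_{\mathfrak{r}}$ as an explicit Eisenstein series of weight $\mathfrak{r}+\frac{1}{2}$ on ${\mit \Gamma}_0(4)$, compute Fourier coefficients via Gauss sums, and reassemble the resulting Dirichlet series into $L(1-\mathfrak{r},\chi_D)$ times the divisor convolution) your sketch does capture. However, there is a genuine gap at the very first step: the single Eisenstein series $\sum_{\gamma\in\Gamma_\infty\backslash\Gamma_0(4)} j(\gamma,z)^{-(2\mathfrak{r}+1)}$ attached to the cusp $\infty$ is \emph{not} proportional to $F_{\mathfrak{r}}$. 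The Eisenstein subspace of $M_{\mathfrak{r}+\frac{1}{2}}({\mit \Gamma}_0(4),\chi_0)$ is two-dimensional (cusps $\infty$ and $0$), whereas the subspace of forms whose $N$-th coefficients vanish unless $(-1)^{\mathfrak{r}}N\equiv 0,1\bmod 4$ (the Kohnen plus space, where $F_{\mathfrak{r}}$ lives) is one-dimensional and does not contain $E_\infty$. Your claim that the support condition ``falls out from the vanishing of the Gauss sums in the exceptional residue class'' is false: already the $c=4$ term of the expansion of $E_\infty$ equals $i^{N}+i^{\,2\mathfrak{r}+1-N}$, which is nonzero in every residue class of $N$ modulo $4$. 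The vanishing of $H(\mathfrak{r},N)$ on the bad classes is produced only by taking the correct linear combination of the Eisenstein series at the cusps $\infty$ and $0$ (equivalently, by applying the plus-space projection), and identifying that combination is part of the content of Cohen's theorem, not a consequence of your construction. This is a missing idea, not merely a missing computation.

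Beyond that, you explicitly defer the entire closed-form evaluation of the Gauss sums and the Euler-factor bookkeeping that turns the resulting double sum into $L(1-\mathfrak{r},\chi_D)\sum_{d\mid m}\mu(d)\chi_D(d)d^{\mathfrak{r}-1}\sigma_{2\mathfrak{r}-1}(m/d)$, which you yourself identify as ``where the real calculation lives''; as it stands the proposal is therefore a plan rather than a proof. (A minor point in your favor: since $\mathfrak{r}\ge 2$ the weight exceeds $2$ and the series converges absolutely, so the auxiliary parameter $s$ and the appeal to analytic continuation are unnecessary, though harmless.) If you intend to supply a proof rather than cite \cite{Co75}, the cleanest route is to write down both cuspidal Eisenstein series, compute both expansions, and verify that the combination forced by the support condition has the stated coefficients.
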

%%%%%%%%%%%%%%     
In our proof, we use this Eisenstein series.
Let $p$ be an odd prime number.
Put
$$G_p(z) := \frac{1}{p}F_p(z).$$
We see from Proposition \ref{prp2.2} that
$$G_p(z) = \sum_{N = 0}^{\infty} \frac{H(p, N)}{p}q^N \in M_{p + \frac{1}{2}}({\mit \Gamma}_{0}(4), \chi_0).$$
Assume that $N$ is a positive integer and that $(-1)^pN = Dm^2$ for some fundamental discriminant $D$ of an imaginary quadratic field such that $\chi_D(p) = 1$
 and some positive integer $m$.
If 
$$\frac{H(p, N)}{p} \not \equiv 0 \bmod p,$$
then 
$$\lambda_p(\mathbb{Q}(\sqrt{D})) = 1$$
by Proposition \ref{prp2.1}.
Considering the value of $L(1-p, \chi_D) / p$ as a Fourier coefficient of the Eisenstein series $G_p(z)$,
we can use properties of modular forms and obtain Theorem \ref{thm8}.
Next, we give a detailed proof in Section \ref{sec3.1}.
%%%%%%%%%%%%%%%
\subsection{Proof of Theorem \ref{thm8}} \label{sec3.1}
\hspace{16pt}In this section, we give a detailed proof of Theorem \ref{thm8}.
This proof relies on Sturm's theorem on congruences of modular forms \cite{St87}.
In Section \ref{sec3.1.1}, we state this theorem.
In Section \ref{sec3.1.2}, we prepare one lemma.
In Section \ref{sec3.1.3}, we prove Theorem \ref{thm8}.
%%%%%%%%%%%%%%%
\subsubsection{Sturm's theorem} \label{sec3.1.1}
In this section, we state Sturm's theorem \cite{St87}.
Let
$$g_1(z) = \sum_{N = 0}^{\infty} a_1(N)q^N$$
be any formal power series of an indeterminate $q$ with rational integer coefficients.
We define the order ord$_{p_1}(g_1)$ of $g_1(z)$ at a prime number $p_1$ by
$${\rm ord}_{p_1}(g_1) := \min\{N \mid a_1(N) \not \equiv 0\bmod p_1\}.$$
Let
$$g_2(z) = \sum_{N = 0}^{\infty} a_2(N)q^N$$
be another formal power series with rational integer coefficients and $m_2$ a rational integer.
We define $g_1(z) \equiv g_2(z) \bmod m_2$ if and only if $a_1(N) \equiv a_2(N) \bmod m_2$ for all non-negative integers $N$.
On congruences of modular forms, J.~Sturm proved the following theorem.
%%%%%%%%%%%%%
\begin{thm}[Sturm, \cite{St87}] \label{thm2.3}
Let $p_1$ be a prime number, $k \in \frac{1}{2}\mathbb{Z}$, $N_1$ a positive integer {\rm (}if $k \not \in \mathbb{Z}$, we assume that $4 \mid N_1)$,
and $\chi$ a Dirichlet character.  
If $g(z) \in M_k({\mit \Gamma}_{0} (N_1), \chi)$ has rational integer coefficients and
$${\rm ord}_{p_1}(g) > \frac{k}{12}[{\mit \Gamma}_{0} (1) : {\mit \Gamma}_{0} (N_1)] =  \frac{k}{12}N_1\prod_{q_2 \mid N_1, \ q_2 : {\rm prime}}(1 + q_2^{-1}),$$
then $g(z) \equiv 0 \bmod p_1$.
\end{thm}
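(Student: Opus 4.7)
The plan is to exploit the geometric nature of modular forms: a form $g \in M_k(\Gamma_0(N_1), \chi)$ with $p_1$-integral Fourier coefficients can be viewed, after reduction mod $p_1$, as a section of a suitable line bundle on the mod $p_1$ fiber of the modular curve $X_0(N_1)$ (for integer $k$; the half-integer case requires a small extra trick, discussed below). The key input is the valence formula, which says that the total degree of this line bundle, equivalently the number of zeros of any nonzero section counted with multiplicity and appropriate weights at elliptic points and cusps, equals $\frac{k}{12}[\Gamma_0(1):\Gamma_0(N_1)]$. First, I would spell out this degree computation using the standard index formula $[\Gamma_0(1):\Gamma_0(N_1)] = N_1 \prod_{q_2 \mid N_1}(1 + q_2^{-1})$ and the ramification data of the covering $X_0(N_1) \to X_0(1)$.

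Next, I would argue that the order of vanishing of the reduction $\bar g$ at the cusp $\infty$ of $X_0(N_1)$ is exactly ${\rm ord}_{p_1}(g)$. If this quantity exceeds the total degree $\frac{k}{12}[\Gamma_0(1):\Gamma_0(N_1)]$, then $\bar g$ must vanish identically on the mod $p_1$ fiber by the valence formula, i.e.\ $g \equiv 0 \bmod p_1$. Equivalently, one can argue more elementarily by forming the norm $G := \prod_{i=1}^{r} (g \vert_k \gamma_i)$ over coset representatives $\gamma_1, \ldots, \gamma_r$ for $\Gamma_0(N_1) \backslash \Gamma_0(1)$, where $r = [\Gamma_0(1) : \Gamma_0(N_1)]$. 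This $G$ is a modular form on $\Gamma_0(1)$ of weight $kr$; the level-$1$ Sturm bound (which is immediate from $\dim M_K(\Gamma_0(1)) \le K/12 + 1$) then handles $G$, provided one carefully controls the $p_1$-vanishing order of $G$ at $\infty$ in terms of that of $g$ using the fact that $g$ appears among the $g \vert_k \gamma_i$.

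The main obstacle is the half-integer weight case ($k \in \frac{1}{2} + \mathbb{Z}$, $4 \mid N_1$), where the slash operator involves the theta multiplier system rather than a Dirichlet character and the target line bundle on $X_0(N_1)$ is nonstandard. My workaround is to pass to the square: $g^2 \in M_{2k}(\Gamma_0(N_1), \chi^2)$ has integer weight, $p_1$-integral Fourier coefficients, and ${\rm ord}_{p_1}(g^2) \ge 2\,{\rm ord}_{p_1}(g)$. If ${\rm ord}_{p_1}(g) > \frac{k}{12}[\Gamma_0(1):\Gamma_0(N_1)]$, then ${\rm ord}_{p_1}(g^2) > \frac{2k}{12}[\Gamma_0(1):\Gamma_0(N_1)]$, so the integer-weight statement gives $g^2 \equiv 0 \bmod p_1$, whence $g \equiv 0 \bmod p_1$ because $\mathbb{F}_{p_1}[[q]]$ is an integral domain for odd $p_1$ (the prime $p_1 = 2$ requires a small separate argument, typically via the fact that the map $f \mapsto f^2$ on mod $2$ power series has trivial kernel among reduced forms).
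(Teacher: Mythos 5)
You should first be aware that the paper does not prove this statement at all: it is quoted verbatim from Sturm's article \cite{St87} and used as a black box, so there is no internal proof to compare your argument against. Judged on its own terms, your sketch follows the two standard routes to Sturm's bound (the valence formula for the reduction of $g$ on the modular curve, and the elementary norm-to-level-one argument), and your reduction of the half-integral-weight case to the integral-weight case by squaring is exactly the device used in the literature; writing $M$ for ${\rm ord}_{p_1}(g)$, the estimate ${\rm ord}_{p_1}(g^2)\ge 2M$ is correct, since $i+j=n<2M$ forces $\min(i,j)<M$. One small slip: $\mathbb{F}_{p_1}[[q]]$ is an integral domain for \emph{every} prime $p_1$, including $p_1=2$, so $\bar g^2=0$ forces $\bar g=0$ with no separate argument needed there.

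The genuine gaps are in the main step. In the norm argument, $G=\prod_i (g|_k\gamma_i)$ has Fourier coefficients in a cyclotomic field, not in $\mathbb{Z}$, and the proof hinges on two facts you do not supply: that each conjugate $g|_k\gamma_i$ has $\mathfrak{p}$-integral $q$-expansion for a prime $\mathfrak{p}$ of that field above $p_1$ (the bounded-denominators statement, a consequence of the $q$-expansion principle and the real content of Sturm's proof), and that consequently ${\rm ord}_{\mathfrak{p}}(G)\ge {\rm ord}_{\mathfrak{p}}(g)={\rm ord}_{p_1}(g)$ because the remaining factors contribute nonnegatively at $\infty$. Saying one must ``carefully control'' this is naming the gap, not closing it. The geometric version has a further problem that matters for this very paper: it treats $\bar g$ as a section of a line bundle on the fiber of $X_0(N_1)$ at $p_1$, but when $p_1\mid N_1$ that fiber is not irreducible (bad reduction) and the naive degree count fails; in the application in Section 3 the level $lP_2$ is divisible by $p$, so this is precisely the case that is needed. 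Either route can be completed, but as written the sketch defers the essential integrality input rather than establishing it.
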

%%%%%%%%%%%%%
\subsubsection{Preliminary} \label{sec3.1.2}
In this section, we prepare one lemma.
%%%%%%%%%%%%%
\begin{lem} \label{lem2.4}
Let $p$ be an odd prime number and 
$$\mathcal{A}_1 := \biggl\{N \in \mathbb{N}\hspace{5pt}\bigg|\hspace{5pt}\biggl(\frac{-N}{p}\biggr) = 1\biggr\}.$$
Then, there exists an integer $\alpha(p)$ coprime to $p$ such that 
$$\frac{\alpha(p)H(p, N)}{p} \in \mathbb{Z}$$
for all $N \in \mathcal{A}_1$.
\end{lem}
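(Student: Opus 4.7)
The plan is to combine the pointwise $p$-integrality from Proposition \ref{prp2.1} with a uniform denominator bound on the Fourier coefficients of Cohen's Eisenstein series $F_p(z)$.

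First, I would verify that for each $N \in \mathcal{A}_1$ the individual coefficient $H(p, N)/p$ is $p$-integral. Writing $-N = Dm^2$ with $D$ the fundamental discriminant of $\mathbb{Q}(\sqrt{-N})$ and $m$ a positive integer, the condition $\left(\frac{-N}{p}\right) = 1$ forces $p \nmid Dm^2$ and $\chi_D(p) = \left(\frac{D}{p}\right) = 1$. Proposition \ref{prp2.1} then yields the $p$-integrality of $L(1-p, \chi_D)/p$, and since the factor $\sum_{d \mid m} \mu(d)\chi_D(d) d^{p-1}\sigma_{2p-1}(m/d)$ appearing in the definition of $H(p, N)$ is an ordinary integer, one obtains $v_p(H(p, N)) \geq 1$ for every $N \in \mathcal{A}_1$.

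Second, to upgrade this pointwise $p$-integrality to a \emph{uniform} denominator bound, I would invoke the modular-form structure. By Proposition \ref{prp2.2}, $F_p$ lies in the finite-dimensional $\mathbb{Q}$-vector space $M_{p+\frac{1}{2}}(\Gamma_0(4), \chi_0)$. Choosing a $\mathbb{Q}$-basis of this space consisting of modular forms with integer Fourier coefficients (via the integral structure on forms of half-integral weight, level $4$), one expresses $F_p$ as a finite $\mathbb{Q}$-linear combination of basis elements and concludes that there is a positive integer $C(p)$, depending only on $p$, with $C(p)\, H(p, N) \in \mathbb{Z}$ for every $N \geq 0$. Write $C(p) = p^{e}\, \alpha(p)$ with $\gcd(\alpha(p), p) = 1$ and $e = v_p(C(p))$; this $\alpha(p)$ will be the one claimed.

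Finally, I would verify that $\alpha(p)$ does the job. For $N \in \mathcal{A}_1$, combining $v_p(H(p,N)) \geq 1$ from the first step with $C(p)\, H(p, N) \in \mathbb{Z}$ from the second gives $v_p(C(p) H(p, N)) \geq e + 1$, so $p^{e+1}$ divides $C(p) H(p, N)$ in $\mathbb{Z}$, and hence
$$\alpha(p) \cdot \frac{H(p, N)}{p} \;=\; \frac{C(p)\, H(p, N)}{p^{\,e+1}} \;\in\; \mathbb{Z}.$$
The main obstacle is step two: the denominators of individual generalized Bernoulli numbers $B_{p, \chi_D} = -p\, L(1-p, \chi_D)$ involve the conductor $|D|$ a priori, so the uniform bound cannot be extracted coefficient by coefficient. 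Overcoming this requires the integral-structure input for $M_{p+\frac{1}{2}}(\Gamma_0(4), \chi_0)$ noted above; alternatively one may argue directly from Cohen's explicit formulas together with a generalized von Staudt--Clausen bound on the denominator of $B_{p, \chi_D}$ that is independent of $D$.
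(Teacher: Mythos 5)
Your proof is correct, and your first step (pointwise $p$-integrality of $H(p,N)/p$ via Proposition \ref{prp2.1} together with the integrality of the divisor sum in the definition of $H(p,N)$) coincides with the paper's. Where you genuinely diverge is the uniform denominator bound, and here the paper does exactly what your closing sentence dismisses as impossible: it extracts the bound coefficient by coefficient. Writing $L(1-p,\chi_D)=-B(p,\chi_D)/p$, it invokes Carlitz's generalized von Staudt--Clausen theorem \cite{Ca59}, by which $B(p,\chi_D)/p$ is an integer except when $D=-4$ or $D=\pm p_2$ with $2p/(p_2-1)$ an odd integer --- forcing $p_2\in\{3,\,2p+1\}$ --- so the denominator of $L(1-p,\chi_D)$ divides $6(2p+1)$ uniformly in $D$; this gives the explicit choice $\alpha(p)=6(2p+1)$ for $p\neq 3$ and $\alpha(3)=14$ (the factor $3$ being droppable since $\chi_D(3)=1$ excludes $D=\pm 3$). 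This is precisely the ``alternative'' you name in your final sentence, so your characterization of the naive conductor-dependent bound as the only coefficientwise option is the one inaccuracy in your write-up. Your primary route --- bounded denominators from the finite dimensionality of $M_{p+\frac{1}{2}}(\Gamma_0(4),\chi_0)$ and a basis with integral $q$-expansions (reducible to the integral-weight case by multiplying by the theta series, whose $q$-expansion is integral with constant term $1$), followed by the valuation bookkeeping $C(p)=p^{e}\alpha(p)$ and $v_p(C(p)H(p,N))\ge e+1$ --- is sound and yields the lemma as stated. What each approach buys: the paper's argument is elementary granted Carlitz's theorem and produces an explicit $\alpha(p)$, while yours needs the integral-structure input for half-integral weight forms but uses nothing about Bernoulli denominators and would apply verbatim to any form in the space with rational coefficients. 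Since the remainder of the paper uses only that $\gcd(\alpha(p),p)=1$ and $\alpha(p)H(p,N)/p\in\mathbb{Z}$, the inexplicitness of your $\alpha(p)$ costs nothing downstream.
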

%%%%%%%%%%%%%
\begin{proof}
Assume $N \in \mathcal{A}_1$.
If $(-1)^pN = Dm^2$ where $D$ is the fundamental discriminant of a quadratic field $\mathbb{Q}(\sqrt{D})$ and $m$ is a positive integer, then
$$\frac{H(p, N)}{p} = \frac{L(1 - p, \chi_D)}{p}\sum_{d | m} \mu(d)\chi_D(d)d^{p-1}\sigma_{2p-1}(m / d)$$
by the definition of $H(p, N)$. 
Since $\sum_{d | m} \mu(d)\chi_D(d)d^{p-1}\sigma_{2p-1}(m / d)$ is an integer, 
we check the value of $L(1 - p, \chi_D) / p$.
Note that $\chi_D(p) = 1$.
Then, $L(1 - p, \chi_D) / p$ is $p$-integral by Proposition \ref{prp2.1}.
Therefore, we need to check the value of $L(1 - p, \chi_D)$.
The value of $L(1 - p, \chi_D)$ is represented by using the generalized Bernoulli number $B(p, \chi_D)$ as follows:
$$L(1 - p, \chi_D) = -\frac{B(p, \chi_D)}{p}.$$
The numbers $B(p, \chi_D) / p$ are always integers unless $D = -4$ or $D = \pm p_2$, 
in which case they have denominator $2$ or $p_2$ respectively,
where $p_2$ is an odd prime number such that $2p / (p_2 - 1)$ is an odd integer (cf. \cite{Ca59}).
Let $m_3$ be a positive integer.
If $2p / (m_3 - 1)$ is an odd integer, then $m_3 = 3$ or $m_3 = 2p + 1$.
Therefore, we can take $\alpha(p) = 6(2p + 1)$ when $p \neq 3$.
When $p = 3$, we can take $\alpha(p) = 2(2p + 1) = 14$ because of $\chi_D(p) = 1$.
The proof of Lemma \ref{lem2.4} is completed.
\end{proof}
%%%%%%%%%%%%%%%%%%%%%%%%     
\subsubsection{Proof of Theorem \ref{thm8}} \label{sec3.1.3}
In this section, we show Theorem \ref{thm8}.
Let $\mathfrak{S}_+ := \{p, r_1, r_2, ... , r_s\}$ and $\mathfrak{S}_- := \{r_1', r_2', ... , r_t'\}$, where $s$ and $t$ are positive integers.
Suppose $Q$ is a prime number such that $Q \not \in \mathfrak{S}_+ \cup \mathfrak{S}_-$, $\biggl(\dfrac{-D_0}{Q}\biggr) = -1$, and $Q \equiv 1 \bmod 8$.
The integer $\alpha(p)$ denotes the one in Lemma \ref{lem2.4}.
Let $p_3$ be an odd prime number and $\delta := \pm 1$.
Define $G_1(z) \in M_{p+\frac{1}{2}}({\mit \Gamma}_0(4p_3^2), \chi_0)$ by
$$G_1(z) := \alpha(p)G_p(z) \otimes \biggl(\frac{\cdot}{p_3}\biggr) = \alpha(p) \sum_{N=0}^{\infty} \biggl(\frac{N}{p_3}\biggr) \frac{H(p, N)}{p}q^N$$
and $G_2(z) \in M_{p+\frac{1}{2}}({\mit \Gamma}_0(4p_3^4), \chi_0)$ by
$$G_2(z) := \frac{1}{2} \biggl\{G_1(z) \otimes \biggl(\frac{\cdot}{p_3}\biggr) + \delta G_1(z)\biggr\} = \alpha(p) \sum_{\bigl(\frac{N}{p_3}\bigr) = \delta}^{\infty} \frac{H(p, N)}{p}q^N.$$
Repeating this process for prime numbers in $\mathfrak{S}_+ \cup \mathfrak{S}_- \cup \{Q\}$, we define $G_3(z) \in M_{p+\frac{1}{2}}({\mit \Gamma}_0(4P_1), \chi_0)$ by
$$G_3(z) := \alpha(p) \sum_{N \in \mathcal{A}_2} \frac{H(p, N)}{p}q^N,$$
where $P_1 := \prod_{r \in \mathfrak{S}_+ \cup \mathfrak{S}_- \cup \{Q\}} r^4$ and 
%%%
\begin{eqnarray}
\mathcal{A}_2 := \left\{N \in \mathbb{N}\hspace{5pt}
\begin{array}{|c}
\big(\frac{-N}{r}\big) = 1 \ {\rm for \ all} \ r \in \mathfrak{S}_+ \ {\rm and} \nonumber \\
\big(\frac{-N}{r'}\big) = -1 \ {\rm for \ all} \ r' \in \mathfrak{S}_- \cup \{Q\} \nonumber \\
\end{array}
\right \}.
\end{eqnarray}
%%%
Define $G_4(z) \in M_{p+\frac{1}{2}}({\mit \Gamma}_0(4\cdot 8^2P_1), \chi_0)$ by
$$G_4(z) := G_3(z) \otimes \chi_8 = \alpha(p) \sum_{N \equiv 3, 7 \bmod 8, \ N \in \mathcal{A}_2}^{\infty} \chi_8(N) \frac{H(p, N)}{p}q^N$$
and $G_5(z) \in M_{p+\frac{1}{2}}({\mit \Gamma}_0(4\cdot 8^4P_1), \chi_0)$ by
$$G_5(z) := \frac{1}{2} \biggl\{G_4(z) \otimes \chi_8 + \delta G_4(z)\biggr\} = \alpha(p) \sum_{N \equiv -A_1 \bmod 8, \ N \in \mathcal{A}_2}^{\infty} \frac{H(p, N)}{p}q^N,$$
where the symbol $\chi_8$ denotes the Kronecker character modulo $8$, $A_1 := 1$ if $\delta = 1$, and $A_1 := 5$ otherwise.
Let $\psi: (\mathbb{Z} / 4\mathbb{Z})^{\times} \rightarrow \mathbb{C}^{\times}$ be the Dirichlet character 
defined by $\psi(1) = 1$ and $\psi(3) = -1$.
Define $G_6(z) \in M_{p+\frac{1}{2}}({\mit \Gamma}_0(4^3P_1), \chi_0)$ by
$$G_6(z) := G_3(z) + G_3(z) \otimes \psi = \alpha(p) \sum_{N \equiv 0 \bmod 4, \ N \in \mathcal{A}_2}^{\infty} \frac{H(p, N)}{p}q^N.$$
Using $U$-operator and $V$-operator, define $G_7(z) \in M_{p+\frac{1}{2}}({\mit \Gamma}_0(4^5P_1), \chi_0)$ by
\begin{equation}
 \begin{split}
G_7(z) := &\hspace{5pt}(U_2 \mid V_2 \mid G_6)(z) - (U_4 \mid V_4 \mid G_6)(z) \notag \\
              = &\hspace{5pt}\alpha(p)\sum_{N \equiv 8\bmod 16, \ N \in \mathcal{A}_2}^{\infty} \frac{H(p, N)}{p}q^N. \notag \\
 \end{split}
\end{equation}                       
For the convenience of the reader, we write the definition of 
modular forms $(U_{p_1} \mid g)(z)$ and $(V_{p_1} \mid g)(z) \in M_k({\mit \Gamma}_{0} (p_1N_1), (\frac{{4p_1}}{\cdot}))$,
where $p_1$ is a prime number, $k \in \frac{1}{2}\mathbb{Z}$, $N_1$ a positive integer with $4 \mid N_1$, and
$g(z) := \sum_{N = 1}^{\infty} a(N)q^N \in M_k({\mit \Gamma}_{0} (N_1), \chi_0)$:
$$(U_{p_1} \mid g)(z) := \sum_{N = 1}^{\infty} a(p_1N)q^N$$
and
$$(V_{p_1} \mid g)(z) := \sum_{N = 1}^{\infty} a(N)q^{p_1N}.$$
Then, we can construct the modular form
$$G(z) := \alpha(p) \sum_{N \equiv -A \bmod B, \ N \in \mathcal{A}_2}^{\infty} \frac{H(p, N)}{p}q^N \in M_{p+\frac{1}{2}}({\mit \Gamma}_0(P_2), \chi_0),$$
where $(A, B)$ is a pair of fixed integers in $\{(1, 8), (5, 8), (8, 16)\}$, $P_2 := 4^5P_1$ if $(A, B) = (8, 16)$, and $P_2 := 4^7P_1$ otherwise (see the definition of $G_5(z)$ and $G_7(z)$).
If $(-1)^pN = -N = Dm^2$ for some fundamental discriminant $D$ of an imaginary quadratic field and some positive integer $m$, it follows that 
$D \equiv A \bmod B$ from $N \equiv -A \bmod B$ and that 
$$\biggl(\frac{D}{p}\biggr) = \biggl(\frac{D}{r_i}\biggr) = 1 \ {\rm and} \ \biggl(\frac{D}{Q}\biggr) = \biggl(\frac{D}{r'_j}\biggr) = -1$$
from
$$\biggl(\frac{-N}{p}\biggr) = \biggl(\frac{-N}{r_i}\biggr) = 1 \ {\rm and} \ \biggl(\frac{-N}{Q}\biggr) = \biggl(\frac{-N}{r'_j}\biggr) = -1,$$
where $i$ runs over $1, 2, ..., s$ and $j$ runs over $1, 2, ..., t$.
Put
$$\kappa := 1 + 2^{m_4}(2p + 1) \prod_{r \in \mathfrak{S}_+ \cup \mathfrak{S}_- \cup \{Q\}} r^3(r + 1),$$
where $m_4 := 10$ if $A = 1, 5$ and $m_4 := 6$ otherwise.
Put
$$\mathcal{A}_3 := \{N \in \mathcal{A}_2 \mid 1 \le N \le \kappa \ {\rm and} \ -N \equiv A \bmod B\}$$
and
$$\mathfrak{S}_p := \{D_N: \ {\rm fundamental \ discriminant \ of} \ \mathbb{Q}(\sqrt{-N}) \mid N \in \mathcal{A}_3\}.$$
It is essential for the proof of Theorem \ref{thm8} to show the following theorem.
%%%%%%%%%%%%%%%%%%%%%%%%%%%%%%%%%%%%%
\begin{thm} \label{thm2.5}
Let $l$ be an odd prime number satisfying the following conditions:
%%%
\begin{eqnarray*}
 \left\{
 \begin{array}{l}
{\rm (i)} \ \kappa < l,\\
{\rm (ii)} \ l \equiv 1 \bmod P_3,\\
{\rm (iii)} \ \chi_D(l) = -1\hspace{10pt}for \ all \ D \in \mathfrak{S}_p \cup \{D_0\},\\
\end{array}
\right.\\
\end{eqnarray*}
%%%%%%%%%%
where $P_3 := B\prod_{r \in \mathfrak{S}_+ \cup \mathfrak{S}_- \cup \{Q\}} r$.
Then, there exists a positive integer $N_l$ satisfying the following conditions:
%%%
\begin{eqnarray*}
 \left\{
 \begin{array}{l}
{\rm (I)} \ 1 \le N_l \le l\kappa,\\
{\rm (II)} \ l \nmid N_l,\\
{\rm (III)} \ lN_l \equiv -A \bmod B,\\
{\rm (IV)} \ lN_l \in \mathcal{A}_2,\\
{\rm (V)} \ \dfrac{H(p, lN_l)}{p} \not \equiv 0 \bmod p.\\
\end{array}
\right.\\
\end{eqnarray*}
\end{thm}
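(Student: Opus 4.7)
I would argue by contradiction, following the Kohnen--Ono--Byeon template.

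\emph{Step 1 (reformulation).} Assume no $N_l$ satisfying (I)--(V) exists. Since $l\equiv 1\bmod P_3$, the prime $l$ satisfies $l\equiv 1\bmod B$ and $\bigl(\frac{l}{r}\bigr)=1$ for every $r\in\mathfrak{S}_+\cup\mathfrak{S}_-\cup\{Q\}$; hence conditions (III) and (IV) on $lN_l$ reduce, respectively, to $N_l\equiv -A\bmod B$ and $N_l\in\mathcal{A}_2$. The assumption therefore reads: for every positive integer $N\le l\kappa$ with $l\nmid N$, $N\equiv -A\bmod B$, and $N\in\mathcal{A}_2$,
$$H(p,lN)/p \;\equiv\; 0 \bmod p.$$

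\emph{Step 2 (auxiliary modular form).} I would construct from $G$, by applying the quadratic twist by $\bigl(\frac{\cdot}{l}\bigr)$ together with a cascade of $V_l$ and $U_{l^2}$ operators analogous to the construction of $G_1,\dots,G_7$ in Section \ref{sec3.1.3} (now performed with the prime $l$ in place of the small primes in $\mathfrak{S}_+\cup\mathfrak{S}_-\cup\{Q\}$), a half-integral weight form $\widetilde G \in M_{p+\frac{1}{2}}(\Gamma_0(P_2 l^2),\chi)$ (for a suitable character $\chi$) whose $q^M$-Fourier coefficient is $\alpha(p)H(p,M)/p$ precisely when $M=lN$ with $l\nmid N$, $N\in\mathcal{A}_2$, $N\equiv -A\bmod B$, and vanishes otherwise. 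By the Step 1 assumption, every Fourier coefficient of $\widetilde G$ at an index $M\le l^2\kappa$ vanishes modulo $p$.

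\emph{Step 3 (Sturm's theorem).} The very form of $\kappa = 1 + 2^{m_4}(2p+1)\prod_r r^3(r+1)$ is engineered so that
$$\frac{p+\frac12}{12}\cdot[\Gamma_0(1):\Gamma_0(P_2 l^2)] \;=\; \frac{p+\frac12}{12}\cdot P_2 l^2 \prod_{q\mid 2P_2 l}\Bigl(1+\frac1q\Bigr) \;<\; l^2\kappa.$$
The numerical heart of this reduces to the identity $2m_4'=m_4+4$, which holds in both cases $(m_4,m_4')\in\{(10,7),(6,5)\}$ obtained from the definition of $P_2=4^{m_4'}P_1$; the "$+1$" in $\kappa$ together with $l>\kappa$ provides the strict inequality. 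By Sturm's theorem (Theorem \ref{thm2.3}), $\widetilde G\equiv 0\bmod p$.

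\emph{Step 4 (contradiction).} In particular, the $q^{l|D_0|}$-coefficient of $\widetilde G$ vanishes modulo $p$: hypothesis (iv) of Theorem \ref{thm8} and the choice of $Q$ put $|D_0|\in\mathcal{A}_2$; $D_0\equiv A\bmod B$ gives $|D_0|\equiv -A\bmod B$; and $l>\kappa\ge|D_0|$ gives $l\nmid|D_0|$. Since $\chi_{D_0}(l)=-1$, the integer $lD_0$ is itself a fundamental discriminant, so Cohen's formula yields $H(p,l|D_0|) = L(1-p,\chi_{lD_0})$, and thus $L(1-p,\chi_{lD_0})/p \equiv 0 \bmod p$. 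Applying the Hecke eigenform identity $F_p\mid T_{l^2} = (1+l^{2p-1})F_p$ at the indices $|D_0|$ and $l|D_0|$, and invoking $\chi_{D_0}(l)=-1$ together with $l\equiv 1\bmod p$ (so $l^{p-1}\equiv 1$, $l^{2p-1}\equiv l\equiv 1\bmod p$) to unpack the resulting $T_{l^2}$-congruences, one chains the vanishing back to $L(1-p,\chi_{D_0})/p \equiv 0 \bmod p$, which by Proposition \ref{prp2.1} contradicts the hypothesis $\lambda_p(\mathbb{Q}(\sqrt{D_0}))=1$.

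\emph{Main obstacle.} The technical crux is Step 3 --- verifying that $\widetilde G$ really lives in level $P_2 l^2$ (not larger, despite the $V_l,U_{l^2}$ cascades) and that the delicate tuning of $\kappa$ pushes Sturm's bound just below $l^2\kappa$. The Hecke-chain of Step 4 is a standard Kohnen--Ono--Byeon manipulation, but degenerate cases (most visibly $p=3$, where the mod $p$ eigenvalue factor $2+l$ may vanish) require separate attention.
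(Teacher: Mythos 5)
Your overall template (isolate the relevant Fourier coefficients in a half-integral weight form, apply Sturm's theorem, and derive a contradiction from one provably nonzero coefficient) is the right one, but two of your specific choices break down, and the paper's proof is engineered precisely to avoid both. First, the auxiliary form $\widetilde G$ you want, supported exactly on indices $M$ with $\mathrm{ord}_l(M)=1$, cannot be produced at level $P_2l^2$ by the operators you name: each application of $V_l$ multiplies the level by $l$, so killing the part with $l^2\mid M$ via $U_{l^2}$ followed by $V_{l^2}$ lands you in level $l^3P_2$ or $l^4P_2$, and then Sturm's bound is of order $l^3\kappa$ or worse, far above $l^2\kappa$. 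The paper sidesteps this entirely: it works with $G_l=(U_l\mid G)-3(V_l\mid G)$ at level $lP_2$ only, does \emph{not} try to make the coefficients at indices divisible by $l$ vanish identically, and instead proves they vanish modulo $p$ via the explicit divisor-sum identity $H(p,-D_{N'}(lm')^2)=(1+l^{p-1}+l^{2p-1})H(p,-D_{N'}m'^2)$ combined with $l\equiv 1\bmod p$ (this is where the coefficient $3$ comes from). That is why $\kappa$ is tuned so that $(l+1)(\kappa-1)<l\kappa$, a Sturm bound at level $lP_2$, not $l^2P_2$.

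Second, and more fatally, your Step 4 cannot be completed. The coefficient you examine, at index $l|D_0|$, equals $\alpha(p)L(1-p,\chi_{lD_0})/p$, which involves the character of the \emph{different} fundamental discriminant $lD_0$, lying in a different square class from $D_0$. The Hecke operator $T_{l^2}$ on $F_p$ relates $H(p,n)$, $H(p,l^2n)$ and $H(p,n/l^2)$ --- indices within a single square class --- and never relates $H(p,l|D_0|)$ to $H(p,|D_0|)$; so there is no chain leading back to $L(1-p,\chi_{D_0})/p$, and nothing in the hypotheses prevents $L(1-p,\chi_{lD_0})\equiv 0\bmod p^2$. The paper instead evaluates $G_l$ at the index $-D_0l^3$ (divisible by $l^3$, so both the $U_l$ and $V_l$ parts contribute): the relevant quantities $H(p,-D_0l^4)$ and $H(p,-D_0l^2)$ lie in the square class of $D_0$, hence equal $H(p,-D_0)$ times explicit divisor sums, which reduce modulo $p$ to $5$ and $3$ using $\chi_{D_0}(l)=-1$ and $l\equiv 1\bmod p$; this gives $b(-D_0l^3)\equiv -4\,\alpha(p)H(p,-D_0)/p\not\equiv 0\bmod p$ by Proposition \ref{prp2.1}, the desired contradiction. (Two smaller points: your inequality $\kappa\ge|D_0|$ is not among the hypotheses --- $l\nmid D_0$ already follows from $\chi_{D_0}(l)=-1$ --- and the degenerate factor $2+l$ you worry about for $p=3$ does not occur in the paper's computation, whose key constant is $-4$.)
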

%%%%%%%%%%%%%%%%%%%%%%%%%%%%%%%%%%%%%
We prove the existence of prime numbers $l$ satisfying the above conditions later (see Lemma \ref{lem2.8}).\\
%%%%%%%%%%%%%%%%%%%%%%%%%%%%%%%%%%%%%
~\\
{\bf Proof of Theorem \ref{thm2.5}.} \ By using $U$-operator and $V$-operator, we construct the following modular form
$$G_l(z) := (U_l \mid G)(z) - 3(V_l \mid G)(z) \in M_{p + \frac{1}{2}}\biggl({\mit \Gamma}_{0} (lP_2), \biggl(\frac{{4l}}{\cdot}\biggl)\biggr).$$
The coefficient $b(N)$ of $q^N$ in $G_l(z)$ is represented as follows:
$$b(N) = \frac{\alpha(p)}{p}\biggl\{H(p, lN) - 3H\biggl(p, \frac{N}{l}\biggr)\biggr\}.$$
First, we treat the case where $l \mid N$.
We can write $N = lN'$ for some integer $N'$.
Then,
$$b(N) = \frac{\alpha(p)}{p}\{H(p, l^2N') - 3H(p, N')\}.$$
If $N' \not \equiv -A \bmod B$ or $N' \not \in \mathcal{A}_2$, then $b(N) = 0$.
If $N'$ satisfies the conditions 
$N' \equiv -A \bmod B$, $N' \in \mathcal{A}_2$, and $1 \le N' \le \kappa$,
we can show $b(N) \equiv 0 \bmod p$ as follows.
We write $-N' = D_{N'}m'^{2}$ for some fundamental discriminant $D_{N'}$ of an imaginary quadratic field and some positive integer $m'$.
Then,
$$b(N) = \frac{\alpha(p)}{p}\{H(p, -D_{N'}(lm')^2) - 3H(p, -D_{N'}m'^2)\}.$$
We see from the definition of $H(p, N)$ that
\begin{equation} \label{eq7}
 \begin{split}
   &\frac{\alpha(p)H(p, -D_{N'}(lm')^2)}{p} \\
= &\frac{\alpha(p)H(p, -D_{N'})}{p}\biggl\{\sum_{d \mid lm'} \mu(d)\chi_{D_{N'}}(d)d^{p-1}\sigma_{2p-1}(lm' / d)\biggr\}. \\
 \end{split}
\end{equation}                       
Since $l > \kappa$ and $\kappa \ge N'$ hold, we have $\gcd(l, m') = 1$.
Then,
\begin{equation}
 \begin{split}
    &\sum_{d \mid lm'} \mu(d)\chi_{D_{N'}}(d)d^{p-1}\sigma_{2p-1}(lm' / d) \notag \\
= &\sum_{d \mid m'} \bigl\{\mu(d)\chi_{D_{N'}}(d)d^{p-1}\sigma_{2p-1}(lm' / d) + \mu(ld)\chi_{D_{N'}}(ld)l^{p-1}d^{p-1}\sigma_{2p-1}(m' / d) \bigr\} \notag \\ 
= &\sum_{d \mid m'} \bigl\{(1 + l^{2p-1})\mu(d)\chi_{D_{N'}}(d)d^{p-1}\sigma_{2p-1}(m' / d)  \notag \\ 
   &- \mu(d)\chi_{D_{N'}}(l)\chi_{D_{N'}}(d)l^{p-1}d^{p-1}\sigma_{2p-1}(m' / d) \bigr\} \notag \\ 
= & \ (1 + l^{2p-1} + l^{p-1}) \sum_{d \mid m'} \mu(d)\chi_{D_{N'}}(d)d^{p-1}\sigma_{2p-1}(m' / d). \notag \\   
 \end{split}
\end{equation}                       
Substituting this in equation (\ref{eq7}), we obtain
$$\frac{\alpha(p)H(p, -D_{N'}(lm')^2)}{p} = (1 + l^{2p-1} + l^{p-1})\frac{\alpha(p)H(p, -D_{N'}m'^2)}{p}.$$
We see from the assumption $l \equiv 1 \bmod p$ that
$$(1 + l^{2p-1} + l^{p-1})\frac{\alpha(p)H(p, -D_{N'}m'^2)}{p} \equiv \frac{3\alpha(p)H(p, -D_{N'}m'^2)}{p} \bmod p.$$
Then, 
$$b(N) \equiv 0 \bmod p.$$
Therefore, if $l \mid N$ and $l \le N \le l\kappa$ hold, we have $b(N) \equiv 0 \bmod p$.
Next, we treat the case where $l \nmid N$.
In this case, $H(p, N / l) = 0$.
Then,
$$b(N) = \frac{\alpha(p)H(p, lN)}{p}.$$
Assume that 
$$\frac{H(p, lN)}{p} \equiv 0 \bmod p$$
for all positive integers $N$ such that
%%%
\begin{eqnarray*}
 \left\{
 \begin{array}{l}
{\rm (I)} \ 1 \le N \le l\kappa,\\
{\rm (II)} \ l \nmid N,\\
{\rm (III)} \ lN \equiv -A \bmod B,\\
{\rm (IV)} \ lN \in \mathcal{A}_2.\\
\end{array}
\right.\\
\end{eqnarray*}
%%%
Then, $b(N) \equiv 0 \mod p$ for all integers $N$ such that $1 \le N \le l\kappa$.
Note that $G_l(z) \in \mathbb{Z}[\![q]\!]$ by Lemma \ref{lem2.4}.
Therefore, ${\rm ord}_p(G_l(z)) > l\kappa$.
Here, we use Theorem \ref{thm2.3}.
We see
\begin{equation}
 \begin{split}
[{\mit \Gamma}_{0} (1) : {\mit \Gamma}_{0} (lP_2)] &= lP_2 \prod_{q_2 \mid lP_2, \ q_2: prime} (1 + q_2^{-1}) \notag \\   
&= l\cdot 4^{m_5} \prod_{r \in \mathfrak{S}_+ \cup \mathfrak{S}_- \cup \{Q\}} r^4 \prod_{r \in \mathfrak{S}_+ \cup \mathfrak{S}_- \cup \{2, l, Q\}} (1 + r^{-1}) \notag \\   
&= \frac{3\cdot 4^{m_5}l(l + 1)}{2l} \prod_{r \in \mathfrak{S}_+ \cup \mathfrak{S}_- \cup \{Q\}} r^4 
\prod_{r \in \mathfrak{S}_+ \cup \mathfrak{S}_- \cup \{Q\}} \frac{r + 1}{r} \notag \\                                                                                                             
&= \frac{3\cdot 4^{m_5}(l + 1)}{2} \prod_{r \in \mathfrak{S}_+ \cup \mathfrak{S}_- \cup \{Q\}} r^3(r + 1), \notag \\  
 \end{split}
\end{equation}                       
where $m_5 := 5$ if $(A, B) = (8, 16)$ and $m_5 := 7$ otherwise.
Then, 
\begin{equation}
 \begin{split}
\frac{1}{12}\biggl(p + \frac{1}{2}\biggr)[{\mit \Gamma}_{0} (1) : {\mit \Gamma}_{0} (lP_2)] 
&= \frac{3\cdot 4^{m_5}(l + 1)(2p + 1)}{48} \prod_{r \in \mathfrak{S}_+ \cup \mathfrak{S}_- \cup \{Q\}} r^3(r + 1) \notag \\   
&= 4^{m_5-2}(l + 1)(2p + 1) \prod_{r \in \mathfrak{S}_+ \cup \mathfrak{S}_- \cup \{Q\}} r^3(r + 1) \notag \\   
&= (l + 1)(\kappa - 1) \notag \\                                                                                                                                                                
&= l\kappa - l + \kappa - 1. \notag \\                                                                                                                                                                 
 \end{split}
\end{equation}                       
Since $\kappa < l$ holds, we obtain
$$\frac{1}{12}\biggl(p + \frac{1}{2}\biggr)[{\mit \Gamma}_{0} (1) : {\mit \Gamma}_{0} (lP_2)] < l\kappa.$$
Then,
$$\frac{1}{12}\biggl(p + \frac{1}{2}\biggr)[{\mit \Gamma}_{0} (1) : {\mit \Gamma}_{0} (lP_2)] < {\rm ord}_p(G_l(z)).$$
Hence it follows from Theorem \ref{thm2.3} that 
\begin{equation} 
G_l(z) \equiv 0 \bmod p. \nonumber \\
\end{equation}
On the other hand,
\begin{equation} \label{eq8}
b(-D_0l^3) \not \equiv 0 \bmod p,
\end{equation}
a contradiction.
Equation (\ref{eq8}) is given as follows.
\begin{equation}
 \begin{split}
 b(-D_0l^3) = & \ \frac{\alpha(p)}{p}\{H(p, -D_0l^4) - 3H(p, -D_0l^2)\} \notag \\   
= & \ \frac{\alpha(p)H(p, -D_0)}{p}\biggl\{\sum_{d \mid l^2} \mu(d)\chi_{D_{0}}(d)d^{p-1}\sigma_{2p-1}(l^2 / d) \notag \\
   & \ - 3\sum_{d \mid l} \mu(d)\chi_{D_{0}}(d)d^{p-1}\sigma_{2p-1}(l / d)\biggr\} \notag \\
= & \ \frac{\alpha(p)H(p, -D_0)}{p}\bigl\{\sigma_{2p-1}(l^2) + \mu(l)\chi_{D_{0}}(l)l^{p-1}\sigma_{2p-1}(l) \notag \\
   & \ - 3\bigl(\sigma_{2p-1}(l) + \mu(l)\chi_{D_{0}}(l)l^{p-1}\sigma_{2p-1}(1)\bigr)\bigr\} \notag \\
= & \ \frac{\alpha(p)H(p, -D_0)}{p}\bigl\{1 + l^{2p-1} + l^{2(2p-1)} + l^{p-1}(1 + l^{2p-1}) \notag \\
   & \ - 3\bigl(1 + l^{2p-1} + l^{p-1}\bigr)\bigr\} \notag \\
\equiv & \ \frac{(3 + 2 - 9)\alpha(p)H(p, -D_0)}{p} \notag \\
\equiv & \ \frac{-4\alpha(p)H(p, -D_0)}{p} \notag \\
\not \equiv & \ 0 \bmod p. \notag \\ 
  \end{split}
\end{equation}                       
The congruence relation $\dfrac{\alpha(p)H(p, -D_0)}{p} \not \equiv 0 \bmod p$ follows
 from the assumption $\lambda_p(\mathbb{Q}(\sqrt{D_0})) = 1$ and Proposition \ref{prp2.1}.
Then, there exists a positive integer $N$ such that 
%%%
\begin{eqnarray*}
 \left\{
 \begin{array}{l}
{\rm (I)} \ 1 \le N \le l\kappa,\\
{\rm (II)} \ l \nmid N,\\
{\rm (III)} \ lN \equiv -A \bmod B,\\
{\rm (IV)} \ lN \in \mathcal{A}_2,\\
{\rm (V)} \ \dfrac{H(p, lN)}{p} \not \equiv 0 \bmod p.\\
\end{array}
\right.\\
\end{eqnarray*}
%%%
We can take the above $N$ as $N_l$.
The proof of Theorem \ref{thm2.5} is completed.
%%%%%%%%%%%%%%%%%%%%%%%%%%%%%%%%%%%%%%%%%
\begin{rem}
We need the assumption $D_0 \neq -8$ in the above proof.
Since $Q \equiv 1 \bmod 8$ holds, we see
$$\biggl(\frac{8}{Q}\biggr) = \biggl(\frac{2}{Q}\biggr)^3 = 1 \neq -1.$$
Then, the coefficient $b(8l^3)$ in $G_l(z)$ is zero.
Therefore, if $D_0 = -8$, we can not use the above discussion.
\end{rem}
%%%%%%%%%%%%%%%%%%%%%%%%%%%%%%%
Using Theorem \ref{thm2.5}, we show Theorem \ref{thm8}.
Let $l$ and $N_l$ be integers as in Theorem \ref{thm2.5}. 
Then, the fundamental discriminant $D_l$ of $\mathbb{Q}(\sqrt{-lN_l})$ satisfies $\lambda_p(\mathbb{Q}(\sqrt{D_l})) = 1$ by Proposition \ref{prp2.1}.
Moreover, $D_l$ satisfies $D_l \equiv A \bmod B$, $\chi_r(D_l) = 1$ for all $r \in \mathfrak{S}_+$, and $\chi_{r'}(D_l) = -1$ for all $r' \in \mathfrak{S}_-$. 
Therefore, we can prove Theorem \ref{thm8} by estimating the number of imaginary quadratic fields $\mathbb{Q}(\sqrt{-lN_l})$.
First, we check the existence of the prime number $l$.
%%%%%%%%%%%%%%%%%%%%%%%
\begin{lem} \label{lem2.8}
There exist infinitely many prime numbers $l$ satisfying the following conditions:
%%%
\begin{eqnarray*}
 \left\{
 \begin{array}{l}
{\rm (i)} \ \kappa < l,\\
{\rm (ii)} \ l \equiv 1 \bmod P_3,\\
{\rm (iii)} \ \chi_D(l) = -1\hspace{10pt}for \ all \ D \in \mathfrak{S}_p \cup \{D_0\}.\\
\end{array}
\right.\\
\end{eqnarray*}
%%%
\end{lem}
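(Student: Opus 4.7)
The plan is to apply Chebotarev's density theorem (equivalently, since every field in sight is abelian over $\mathbb{Q}$, Dirichlet's theorem on primes in arithmetic progressions) to the extension $L := \mathbb{Q}(\zeta_{P_3}) \cdot \prod_{D \in \mathfrak{S}_p \cup \{D_0\}} \mathbb{Q}(\sqrt{D})$. For a prime $l$ unramified in $L$, the Frobenius $\mathrm{Frob}_l \in \mathrm{Gal}(L/\mathbb{Q})$ is a single well-defined element (the group is abelian); condition (ii) translates to $\mathrm{Frob}_l$ acting trivially on $\mathbb{Q}(\zeta_{P_3})$, and condition (iii) translates to $\mathrm{Frob}_l$ acting as the nontrivial involution on every $\mathbb{Q}(\sqrt{D})$. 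Once a single $\sigma \in \mathrm{Gal}(L/\mathbb{Q})$ satisfying both is produced, Chebotarev yields infinitely many primes $l$ with $\mathrm{Frob}_l = \sigma$, and condition (i) $\kappa < l$ costs only a finite exclusion.

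The crux is the existence of such a $\sigma$. I would first verify that $\mathbb{Q}(\sqrt{D}) \not\subset \mathbb{Q}(\zeta_{P_3})$ for every $D \in \mathfrak{S}_p \cup \{D_0\}$. For $D_0$, the splitting/inertness hypothesis (iv) of Theorem~\ref{thm8} together with the defining choice $\left(\tfrac{-D_0}{Q}\right) = -1$ forces every prime in $\mathfrak{S}_+ \cup \mathfrak{S}_- \cup \{Q\}$ to be coprime to $D_0$, and conditions (i) and (ii) combined with $(A, B) \in \{(1, 8), (5, 8), (8, 16)\}$ rule out $D_0 \in \{-4, -8\}$, so $|D_0|$ carries an odd prime factor outside $P_3$. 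For each $D \in \mathfrak{S}_p$, the Kronecker symbol conditions built into $\mathcal{A}_2$ give the same coprimality, while the congruence $-N \equiv A \bmod B$ excludes $D = -4$ (and excludes $D = -8$ outside the regime $(8, 16)$, which requires individual inspection).

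Finally, to exhibit $\sigma$, I note that since every $D \in \mathfrak{S}_p \cup \{D_0\}$ is negative, any multiplicative relation $\prod_{D \in S} D \in (\mathbb{Q}^{\times})^2$ forces $|S|$ to be even (by sign considerations), so the target element $(-1, \ldots, -1) \in \prod_{D} \mathbb{Z}/2\mathbb{Z}$ is compatible with all such relations and lifts to an element $\sigma_0 \in \mathrm{Gal}(K/\mathbb{Q})$ with $K := \prod_{D} \mathbb{Q}(\sqrt{D})$. Because each $D$ is coprime to every odd prime dividing $P_3$, $\sigma_0$ restricts trivially to $K \cap \mathbb{Q}(\zeta_{P_3})$, and hence extends to $\sigma \in \mathrm{Gal}(L/\mathbb{Q})$ acting trivially on $\mathbb{Q}(\zeta_{P_3})$. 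The main obstacle will be this independence/extension step, in particular the delicate handling of the exceptional $D = -8$ in the $(8, 16)$ regime (which may require a judicious choice of $Q$); once that is settled, the lemma follows immediately from the standard density theorem.
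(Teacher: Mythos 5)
Your reduction to producing a single $\sigma \in \mathrm{Gal}(L/\mathbb{Q})$ that is trivial on $\mathbb{Q}(\zeta_{P_3})$ and acts by $-1$ on each $\sqrt{D}$ is sound, and your sign-parity observation (any relation $\prod_{D\in S}D\in(\mathbb{Q}^{\times})^2$ has $|S|$ even because every $D$ is negative) correctly produces $\sigma_0$ on $K=\prod_{D}\mathbb{Q}(\sqrt{D})$. The genuine gap is the extension step. The inference ``each $D$ is coprime to every odd prime dividing $P_3$, hence $\sigma_0$ restricts trivially to $K\cap\mathbb{Q}(\zeta_{P_3})$'' does not work: since $B\in\{8,16\}$ divides $P_3$, the field $\mathbb{Q}(\zeta_{P_3})$ contains $\sqrt{-1},\sqrt{2},\sqrt{-2}$, and coprimality to the odd part of $P_3$ says nothing about whether one of these lies in $K$ as a product of an \emph{odd} number of the $D$'s --- in which case $\sigma_0$ acts on it by $-1$ and no admissible $\sigma$ exists. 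Concretely, in the $(A,B)=(8,16)$ case every $D$ is $-8u_D$ with $u_D$ odd, and if some odd-cardinality subset $S$ has $\prod_{D\in S}u_D$ a square (the single discriminant $-8$, but also e.g.\ $\{-24,-40,-120\}$), then $\prod_{D\in S}D=-2\cdot(\mathrm{square})$, so conditions (ii) and (iii) are outright contradictory: (ii) forces $l\equiv 1\bmod 8$, hence $\chi_{-2}(l)=1$, while (iii) forces $\prod_{D\in S}\chi_D(l)=(-1)^{|S|}=-1$. So the obstruction is not confined to the lone exceptional discriminant $-8$ you flag; it must be excluded for \emph{all} odd subproducts, and doing so requires hypotheses you never invoke: in the cases $(1,8)$ and $(5,8)$ the congruence $D\equiv A\bmod 8$ shows an odd product is odd and $\equiv A\bmod 8$, hence not $-1$ nor $\pm2$ times a square; in the case $(8,16)$ one needs the auxiliary prime $Q$, chosen with $Q\equiv 1\bmod 8$ and $\chi_D(Q)=-1$ for every $D\in\mathfrak{S}_p\cup\{D_0\}$ (this is built into $\mathcal{A}_2$ and hypothesis (iv)), which gives $\chi_{\prod_{D\in S}D}(Q)=(-1)^{|S|}=-1\neq 1=\chi_{-2\cdot\square}(Q)$ for $|S|$ odd.

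For comparison, the paper avoids having to control all multiplicative relations among the $D$'s simultaneously: it adjoins only $\sqrt{D_0D_1},\dots,\sqrt{D_0D_{n_1}}$ to $\mathbb{Q}(\zeta_{P_3})$, turning condition (ii) together with ``$\chi_{D_0D_i}(l)=1$ for all $i$'' into a split-completely condition in a field $\mathfrak{A}$, so that the entire lemma reduces to the single non-containment $\sqrt{D_0}\notin\mathfrak{A}$; that one statement is then verified by exactly the two devices above (the congruence mod $B$, resp.\ the prime $Q$ and $D_0\neq -8$). Your route can be completed, but only by importing the same arithmetic input; as written, the justification for the decisive step is incorrect and the step is false without it.
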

%%%%%%%%%%%%%%%%%%%%%%%
\begin{proof}
We use the Chebotarev density theorem.
For the convenience, we rewrite $\mathfrak{S}_p = \{D_1, D_2, ..., D_{n_1}\}$, where $n_1$ is a positive integer.
Put
$$\mathfrak{A} := \mathbb{Q}\bigl(\zeta_{P_3}, \sqrt{D_0D_1}, \sqrt{D_0D_2}, ..., \sqrt{D_0D_{n_1}}\bigr)$$
and 
$$\mathfrak{B} := \mathfrak{A}(\sqrt{D_0}).$$
Let $\mathbb{Q}(\zeta_{v_0})$ be a cyclotomic field containing $\mathfrak{B}$.\\
\hspace{17pt}First, we will show $\sqrt{D_0} \not \in \mathfrak{A}$, that is, $[\mathfrak{B} : \mathfrak{A}] = 2$ as follows.\\
(1)  We treat the case where $D \equiv 1 \bmod 8$ (resp. $D \equiv 5 \bmod 8$) for all $D \in \mathfrak{S}_p \cup \{D_0\}$, 
that is, $(A, B) = (1, 8)$ (resp. $(A, B) = (5, 8)$).
Since $r \nmid D_0D_1 \cdots D_{n_1}$ for all $r \in \mathfrak{S}_+ \cup \mathfrak{S}_- \cup \{2, Q\}$,
it is sufficient to show 
$$\sqrt{D_0} \not \in \mathfrak{C} := \mathbb{Q}(\sqrt{-1}, \sqrt{D_0D_1}, \sqrt{D_0D_2}, ..., \sqrt{D_0D_{n_1}}).$$
Suppose $\sqrt{D_0} \in \mathfrak{C}$.
We can write 
$$D_0 = -\frac{(D_0D_1)^{\varepsilon_1}(D_0D_2)^{\varepsilon_2}\cdots(D_0D_{n_1})^{\varepsilon_{n_1}}}{\square},$$
where $\varepsilon_i \in \{0, 1\}$ for each $i \in \{1, 2, ... , n_1\}$ and $\square$ denotes the square part of 
$(D_0D_1)^{\varepsilon_1}(D_0D_2)^{\varepsilon_2}\cdots(D_0D_{n_1})^{\varepsilon_{n_1}}$.
Since $D \equiv 1 \bmod 4$ for all $D \in \mathfrak{S}_p \cup \{D_0\}$ and $\square \equiv 1 \bmod 4$ holds, we have
$$-\frac{(D_0D_1)^{\varepsilon_1}(D_0D_2)^{\varepsilon_2}\cdots(D_0D_{n_1})^{\varepsilon_{n_1}}}{\square} \equiv 3 \bmod 4.$$ 
On the other hand, 
$$D_0 \equiv 1 \bmod 4.$$
This is a contradiction.
Then, $\sqrt{D_0} \not \in \mathfrak{C}$, that is, $\sqrt{D_0} \not \in \mathfrak{A}$.\\
(2) \ We treat the case where $D \equiv 8 \bmod 16$ for all $D \in \mathfrak{S}_p \cup \{D_0\}$, that is, $(A, B) = (8, 16)$.
Since $r \nmid D_0D_1 \cdots D_{n_1}$ for all $r \in \mathfrak{S}_+ \cup \mathfrak{S}_- \cup \{Q\}$,
it is sufficient to show 
$$\sqrt{D_0} \not \in \mathfrak{D} := \mathbb{Q}(\sqrt{-1}, \sqrt{2}, \sqrt{D_0D_1}, \sqrt{D_0D_2}, ..., \sqrt{D_0D_{n_1}}).$$
Suppose $\sqrt{D_0} \in \mathfrak{D}$.
We can write 
$$D_0 = -8 \cdot \frac{(D_0D_1)^{\varepsilon_1}(D_0D_2)^{\varepsilon_2}\cdots(D_0D_{n_1})^{\varepsilon_{n_1}}}{\square},$$
where $\varepsilon_i \in \{0, 1\}$ for each $i \in \{1, 2, ... , n_1\}$ and $\square$ denotes the square part of 
$c_1 := (D_0D_1)^{\varepsilon_1}(D_0D_2)^{\varepsilon_2}\cdots(D_0D_{n_1})^{\varepsilon_{n_1}}$.
When $\varepsilon_i = 0$ for all $i \in \{1, 2, ... , n_1\}$, we see $D_0 = -8$, a contradiction.
Therefore, we treat the case where there exists $i \in \{1, 2, ... , n_1\}$ such that $\varepsilon_i = 1$.
Since
$$\biggl(\frac{D_0D_i}{Q}\biggr) = (-1)^2 = 1$$
for all $i \in \{1, 2, ... , n_1\}$ and $\biggl(\dfrac{-8}{Q}\biggr) = 1$ holds, 
we have
$$\biggl(\dfrac{-8(c_1 / \square)}{Q}\biggr) = \biggl(\dfrac{-8c_1}{Q}\biggr) = \biggl(\dfrac{c_1}{Q}\biggr) = 1.$$
On the other hand, $\biggl(\dfrac{D_0}{Q}\biggr) = -1$. 
This is a contradiction.
Then, $\sqrt{D_0} \not \in \mathfrak{D}$, that is, $\sqrt{D_0} \not \in \mathfrak{A}$.\\
\hspace{17pt}It follows from $[\mathfrak{B} : \mathfrak{A}] = 2$ that there exists
$$\tau \in {\rm Gal}(\mathbb{Q}(\zeta_{v_0}) / \mathfrak{A}) \smallsetminus {\rm Gal}(\mathbb{Q}(\zeta_{v_0}) / \mathfrak{B}).$$
We see from the Chebotarev density theorem that
there exist infinitely many prime numbers $l$ 
such that there is a prime ideal $\wp$ of $\mathbb{Q}(\zeta_{v_0})$ over $l$ 
with $\tau = \biggl[\dfrac{\mathbb{Q}(\zeta_{v_0}) / \mathbb{Q}}{\wp}\biggr]$,
where $\biggl[\dfrac{\mathbb{Q}(\zeta_{v_0}) / \mathbb{Q}}{\wp}\biggr]$ is the Frobenius of $\wp$ in ${\rm Gal}(\mathbb{Q}(\zeta_{v_0}) / \mathbb{Q})$.
Since the number of primes $l'$ such that $l' \le \kappa$, the number of prime factors of $D_0$, and the number of prime factors of $v_0$ are finite,
there exist infinitely many prime numbers $l$ with $\kappa < l$ and $l \nmid v_0D_0$ such that 
there is a prime ideal $\wp$ of $\mathbb{Q}(\zeta_{v_0})$ over $l$ 
with $\tau = \biggl[\dfrac{\mathbb{Q}(\zeta_{v_0}) / \mathbb{Q}}{\wp}\biggr]$.\\
\hspace{16pt}Next, we will show that such prime numbers $l$ satisfy the conditions (ii) and (iii) of this lemma as follows.
Note that such prime number $l$ is unramified in $\mathbb{Q}(\zeta_{v_0})$, that is, the prime ideal $\wp$ is unramified in $\mathbb{Q}(\zeta_{v_0})$.
Then, the decomposition group of $\wp$ in ${\rm Gal}(\mathbb{Q}(\zeta_{v_0}) / \mathbb{Q})$ is $\langle \tau \rangle$.
Since ${\rm Gal}(\mathbb{Q}(\zeta_{v_0}) / \mathfrak{A})$ contains $\langle \tau \rangle$,
the subfield of $\mathbb{Q}(\zeta_{v_0})$ corresponded with this decomposition group contains $\mathfrak{A}$.
Then, $l$ splits completely in $\mathfrak{A}$.
Since $\mathbb{Q}(\zeta_{P_3})$ is contained in $\mathfrak{A}$, the prime number $l$ splits completely in $\mathbb{Q}(\zeta_{P_3})$.
The property that $l$ splits completely in $\mathbb{Q}(\zeta_{P_3})$ is equivalent to $l \equiv 1 \bmod P_3$ (see \cite[Theorem 2.13]{Wa}).
Then, $l$ satisfies the condition (ii) of this lemma.
It follows from $\mathbb{Q}(\sqrt{D_0D_i}) \subset \mathfrak{A}$ that 
the prime number $l$ splits completely in $\mathbb{Q}(\sqrt{D_0D_i})$, where $i \in \{1, 2, ... , n_1\}$.
On the other hand, we see from $\tau \not \in {\rm Gal}(\mathbb{Q}(\zeta_{v_0}) / \mathfrak{B})$ that 
the prime ideals of $\mathfrak{A}$ over $l$ do not split completely in $\mathfrak{B}$.
Then, $l$ does not split completely in $\mathbb{Q}(\sqrt{D_0})$.
We see from $l \nmid D_0$ that $l$ is inert in $\mathbb{Q}(\sqrt{D_0})$. 
Therefore, we find that $\chi_{D}(l) = -1$ for all $D \in \mathfrak{S}_p \cup \{D_0\}$.
The prime number $l$ satisfies the condition (iii) of this lemma.
The proof of Lemma \ref{lem2.8} is completed.
\end{proof}
%%%%%%%%%%%%%%%%%%%%%%%%%%%%%%%%%%%%%%%%%
Next, we estimate the number of imaginary quadratic fields $\mathbb{Q}(\sqrt{-lN_l})$.
In Lemma \ref{lem2.8}, we proved that there exist infinitely many prime numbers $l$ satisfying the conditions (i) $\kappa < l$, 
(ii) $l \equiv 1 \bmod P_3$, and (iii) $\chi_D(l) = -1$ for all $D \in \mathfrak{S}_p \cup \{D_0\}$.
Since the set of prime numbers $l$ such that $l \equiv 1 \bmod P_3$ and $\chi_D(l) = -1$ for all $D \in \mathfrak{S}_p \cup \{D_0\}$ is not empty,
we see from the Chinese remainder theorem that 
there is an arithmetic progression $u_1$ modulo $v_1$ with $\gcd(u_1, v_1) = 1$ 
which contains infinitely many such prime numbers $l$.
Here, we need the following lemma.
%%%%%%%%%%%%%%%%%%
\begin{lem} \label{lem2.9}
Let 
%%%
\begin{eqnarray}
\mathcal{A}_4 := \left\{l: \ odd \ prime\hspace{5pt}
\begin{array}{|c}
l \equiv u_1 \bmod v_1 \ and \ l > \kappa \nonumber \\
\end{array}
\right \},
\end{eqnarray}
%%%
where $u_1$ and $v_1$ are the integers mentioned in the above. 
Suppose $D_l$ is the fundamental discriminant of an imaginary quadratic field $\mathbb{Q}(\sqrt{-lN_l})$,
where $l$ and $N_l$ denote the integers in Theorem \ref{thm2.5}.
Then, for any fixed $l \in \mathcal{A}_4$,
there exists at most one element $l' \in \mathcal{A}_4$ such that $D_l = D_{l'}$. 
\end{lem}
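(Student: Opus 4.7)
The plan is to argue by contradiction: suppose there exist two distinct primes $l_1, l_2 \in \mathcal{A}_4$, each different from $l$, with $D_{l_1} = D_{l_2} = D_l$. I will extract incompatible size inequalities on $l$, $l_1$, $l_2$ from the estimates in Theorem \ref{thm2.5}, which will show that at most one such prime distinct from $l$ is possible.

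The first step is an $l$-adic valuation analysis aimed at placing $l$, $l_1$, $l_2$ inside the prime factorization of $|D_l|$. Writing $-l N_l = D_l m_l^2$ with $m_l$ a positive integer, we have $l N_l = |D_l|\, m_l^2$. Since $|D_l|$ is the absolute value of a fundamental discriminant and $l$ is an odd prime, the odd part of $|D_l|$ is squarefree, so $v_l(|D_l|) \in \{0, 1\}$. Condition (II) of Theorem \ref{thm2.5} says $l \nmid N_l$, so $v_l(l N_l) = 1$, which forces $v_l(|D_l|) = 1$ and $v_l(m_l) = 0$. Applying the same computation with $l$ replaced by $l_i$ (using $D_{l_i} = D_l$) gives $l_i \mid |D_l|$ with multiplicity exactly one for $i = 1, 2$. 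Because $l, l_1, l_2$ are pairwise distinct odd primes, we conclude $l l_1 l_2 \mid |D_l|$, and write $|D_l| = l l_1 l_2 e$ for some positive integer $e$.

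Next I would combine this factorization with the size bound in condition (I). We have
\[
N_l \;=\; \frac{|D_l|}{l}\, m_l^2 \;=\; l_1 l_2 e\, m_l^2 \;\ge\; l_1 l_2,
\]
and condition (I) gives $N_l \le l\kappa$. Hence $l_1 l_2 \le l\kappa$; since $l_2 > \kappa$ by the definition of $\mathcal{A}_4$, this forces $l_1 < l$. The same reasoning with the roles of $l$ and $l_1$ interchanged yields
\[
N_{l_1} \;=\; l l_2 e\, m_{l_1}^2 \;\ge\; l l_2,
\]
together with $N_{l_1} \le l_1 \kappa$, hence $l l_2 \le l_1 \kappa$; using $l_2 > \kappa$ again, this gives $l < l_1$, contradicting $l_1 < l$. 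No such pair $l_1, l_2$ therefore exists.

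The only subtle point is the valuation argument pinning down $l l_1 l_2 \mid |D_l|$; once this divisibility is in place, the contradiction closes almost mechanically, thanks to the upper bound $N_l \le l \kappa$ from (I) of Theorem \ref{thm2.5} and the defining lower bound $l' > \kappa$ built into $\mathcal{A}_4$, which is exactly what prevents the quantity $l \kappa / l_2$ from exceeding $l_1$.
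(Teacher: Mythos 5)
Your proof is correct and follows essentially the same route as the paper: condition (II) of Theorem \ref{thm2.5} forces each of the three distinct primes to divide the common discriminant (equivalently, the other two primes divide $N_l$), and the bounds $N_l \le l\kappa$ and $l' > \kappa$ then clash. The only cosmetic difference is that the paper orders the three primes and reaches the contradiction in a single inequality chain ($l_i l_j \le N_{l_h} \le l_h\kappa < l_i l_j$), whereas you derive the two opposite strict inequalities $l_1 < l$ and $l < l_1$; both are valid.
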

%%%%%%%%%%%%%%%%%%%
\begin{proof}
For the convenience, we rewrite $\mathcal{A}_4 = \{l_i \mid i \in \mathbb{N}\}$.
For $l_i$, $l_j \in \mathcal{A}_4$, we assume that $l_i < l_j$ if $i$ and $j$ are positive integers with $i < j$.
Suppose $D_{l_h} = D_{l_i} = D_{l_j}$, where $h$, $i$, and $j$ are positive integers with $h < i < j$.
Then, 
$$-\frac{l_hN_{l_h}}{\square} = -\frac{l_iN_{l_i}}{\square} = -\frac{l_jN_{l_j}}{\square},$$
where $\square$ is the square part of the numerator.
By Theorem \ref{thm2.5},
$$\gcd(l_h, N_{l_h}) = \gcd(l_i, N_{l_i}) = \gcd(l_j, N_{l_j}) = 1.$$
This implies that
$$l_i l_j \mid N_{l_h}.$$
Then,
$$l_i l_j \le N_{l_h}.$$
On the other hand, we see from Theorem \ref{thm2.5} that 
$N_{l_h} \le l_h\kappa$ and $\kappa < l_j$.
Then, 
$$N_{l_h} < l_i l_j.$$
This is a contradiction.
The proof of Lemma \ref{lem2.9} is completed.
\end{proof}
%%%%%%%%%%%%%%%%%%%
From $0 < -D_l \le lN_l \le l^2\kappa$ and Lemma \ref{lem2.9},
we obtain the following:
%%%
\begin{eqnarray}
\sharp \left\{D \in S_-(X)\hspace{5pt}
\begin{array}{|c}
\lambda_p(\mathbb{Q}(\sqrt{D})) = 1, \nonumber \\
D \equiv A \bmod B, \nonumber \\
\big(\frac{D}{r}\big) = 1 \ {\rm for \ all} \ r \in \mathfrak{S}_+, \ {\rm and} \nonumber \\
\big(\frac{D}{r'}\big) = -1 \ {\rm for \ all} \ r' \in \mathfrak{S}_- \nonumber \\
\end{array}
\right \}
\end{eqnarray}
%%%
\begin{eqnarray}
\hspace{-37pt}\ge \sharp\left\{D_l \in S_-(X)\hspace{5pt}
\begin{array}{|c}
\mathbb{Q}(\sqrt{D_l}) = \mathbb{Q}(\sqrt{-lN_l}), \nonumber \\ 
{\rm where} \ l \in \mathcal{A}_4 \ {\rm and} \nonumber \\ 
N_l \in \mathbb{N} \ {\rm in \ Theorem \ \ref{thm2.5}} \nonumber \\
\end{array}
\right \} 
\end{eqnarray}
\begin{equation}
 \begin{split}
\ge &\frac{1}{2}\sharp \{ \ l \in \mathcal{A}_4 \mid 0 \le l^2\kappa \le X\} \notag \\ 
= &\frac{1}{2}\sharp \{ \ l: {\rm odd \ prime} \mid l \equiv u_1 \bmod v_1, \ l > \kappa, \ {\rm and} \ l^2\kappa \le X\} \notag \\    
 \end{split}
\end{equation} 
\begin{equation}
 \begin{split}                         
= &\hspace{5pt}\frac{1}{2}\sharp \biggl\{ \ l: {\rm odd \ prime}\hspace{5pt}\bigg|\hspace{5pt}l \equiv u_1 \bmod v_1 \ {\rm and} \ 0 < l \le \dfrac{\sqrt{X}}{\sqrt{\kappa}}\biggr\} \notag \\                                
&-\frac{1}{2}\sharp \{ \ l: {\rm odd \ prime}\hspace{5pt}|\hspace{5pt}l \equiv u_1 \bmod v_1 \ {\rm and} \ 0 < l \le \kappa \} \notag \\ 
 \end{split}
\end{equation} 
for any sufficiently large $X \in \mathbb{R}$.
Let
$$E(X) := \frac{1}{2}\sharp \biggl\{ \ l: {\rm odd \ prime}\hspace{5pt}\bigg|\hspace{5pt}l \equiv u_1 \bmod v_1 \ {\rm and} \ 0 < l \le \dfrac{\sqrt{X}}{\sqrt{\kappa}}\biggr\}.$$
By the Dirichlet's theorem on primes in arithmetic progression, we have
$$E(X) \sim \frac{\sqrt{X}}{\varphi(v_1)\sqrt{\kappa}(\log X - \log \kappa)}$$
for any sufficiently large $X \in \mathbb{R}$.
Then, under the assumption of the existence of $D_0$, we see that
%%%
\begin{eqnarray}
\sharp \left\{D \in S_-(X)\hspace{5pt}
\begin{array}{|c}
\lambda_p(\mathbb{Q}(\sqrt{D})) = 1, \nonumber \\
D \equiv A \bmod B, \nonumber \\
\big(\frac{D}{r}\big) = 1 \ {\rm for \ all} \ r \in \mathfrak{S}_+, \ {\rm and} \nonumber \\
\big(\frac{D}{r'}\big) = -1 \ {\rm for \ all} \ r' \in \mathfrak{S}_- \nonumber \\
\end{array}
\right \} \gg \frac{\sqrt{X}}{\log X}
\end{eqnarray}
%%%
for any sufficiently large $X \in \mathbb{R}$.
The proof of Theorem \ref{thm8} is completed.
%%%%%%%%%%%%%%%%%%%%%%%%%%%%%%%%%%%%%%%%%
%%%%%%%%%%%%%%%%%%%%%%%%%%%%%%%%%%%%%%%%%
%%%%%%%%%%%%%%%%%%%%%%%%%%%%%%%%%%%%%%%%%
\section{Proofs of Theorems \ref{thm14}, \ref{thm14.1}, \ref{thm12}, and \ref{thm12.1}} \label{sec4} 
In this section, we prove Theorems \ref{thm14}, \ref{thm14.1}, \ref{thm12}, and \ref{thm12.1}.
To show Theorems \ref{thm14} and \ref{thm12}, we will prove the following theorem.
%%%%%%%%%%%%%%%%%
\begin{thm} \label{thm3.1}
Let $p$ be an odd prime number, $n$ an integer greater than $1$ such that $\gcd(p, n) = 1$, and $x_1$ a positive integer such that $\gcd(p, x_1) = 1$.\\
$(1)$ \ Assume $x_1^2 < p^n$.
Then, $\lambda_p(\mathbb{Q}(\sqrt{x_1^2 - p^n})) = 1$ if and only if $p \nmid h(\mathbb{Q}(\sqrt{x_1^2 - p^n}))$ and $(2x_1)^{p-1} \not \equiv 1 \bmod p^2$.\\
$(2)$ \ Assume $x_1^2 < 4p^n$ and $x_1$ is odd.
Then, $\lambda_p(\mathbb{Q}(\sqrt{x_1^2 - 4p^n})) = 1$ if and only if $p \nmid h(\mathbb{Q}(\sqrt{x_1^2 - 4p^n}))$ and $x_1^{p-1} \not \equiv 1 \bmod p^2$.
\end{thm}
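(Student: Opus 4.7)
The plan is to prove both parts of Theorem \ref{thm3.1} by direct application of Sands' Theorem \ref{thm1.1} (and the Remark following it), after making the factorization of $p$ explicit and performing a single Hensel-type computation in the completion at $\overline{\wp}$. Set $K := \mathbb{Q}(\sqrt{x_1^2 - p^n})$ in case (1) and $K := \mathbb{Q}(\sqrt{x_1^2 - 4 p^n})$ in case (2). Since $\gcd(p, x_1) = 1$, in both cases $x_1^2 - p^n$ (resp.\ $x_1^2 - 4 p^n$) is a non-zero square modulo $p$, so $p$ splits in $K$, say $(p) = \wp \overline{\wp}$; furthermore in case (2) $x_1^2 - 4p^n \equiv 1 \bmod 4$, so $(x_1 \pm \sqrt{x_1^2 - 4p^n})/2 \in \mathcal{O}_K$.

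Next, I would set
\[
\xi := x_1 + \sqrt{x_1^2 - p^n} \quad \text{(case (1))}, \qquad \xi := \frac{x_1 + \sqrt{x_1^2 - 4p^n}}{2} \quad \text{(case (2))},
\]
so that $\xi \overline{\xi} = p^n$, and show that $(\xi) = \wp^n$ after possibly relabelling $\wp$ and $\overline{\wp}$. Writing $(\xi) = \wp^a \overline{\wp}^{n-a}$ (forced by $\xi \overline{\xi} = p^n$ and $\wp \neq \overline{\wp}$), if we had $1 \le a \le n-1$ then $\wp$ would divide both $\xi$ and $\overline{\xi}$, hence would divide $\xi + \overline{\xi}$, which equals $2 x_1$ in case (1) and $x_1$ in case (2); this contradicts $\gcd(p, 2x_1) = 1$. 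Hence $a \in \{0, n\}$, and WLOG $(\xi) = \wp^n$. Since $\gcd(n, p) = 1$ by hypothesis, Theorem \ref{thm1.1} with $m_1 = n$ says exactly that $\lambda_p(K) = 1$ if and only if $p \nmid h(K)$ and $\xi^{p-1} \not\equiv 1 \bmod \overline{\wp}^2$.

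The remaining step is to evaluate $\xi \bmod \overline{\wp}^2$. Identifying $\mathcal{O}_K / \overline{\wp}^2$ with $\mathbb{Z}_p / p^2 \mathbb{Z}_p$ via the $\overline{\wp}$-adic completion (in which we pick the square-root so that $\sqrt{x_1^2 - p^n} \equiv x_1 \bmod \overline{\wp}$), the binomial expansion
\[
\sqrt{x_1^2 - p^n} = x_1 \sqrt{1 - p^n / x_1^2} = x_1 \Bigl( 1 - \tfrac{1}{2} p^n / x_1^2 + \cdots \Bigr)
\]
has every correction term of $p$-adic valuation $\ge n \ge 2$ (the factor $1/i!$ only loses $(i-1)/(p-1)$ in valuation, which is easily absorbed). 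Thus $\sqrt{x_1^2 - p^n} \equiv x_1 \bmod p^2$ in $\mathbb{Z}_p$, and analogously $\sqrt{x_1^2 - 4p^n} \equiv x_1 \bmod p^2$ in case (2). Therefore
\[
\xi \equiv 2 x_1 \bmod \overline{\wp}^2 \quad \text{in case (1)}, \qquad \xi \equiv x_1 \bmod \overline{\wp}^2 \quad \text{in case (2)},
\]
so the condition $\xi^{p-1} \not\equiv 1 \bmod \overline{\wp}^2$ is equivalent to $(2 x_1)^{p-1} \not\equiv 1 \bmod p^2$ in case (1) and to $x_1^{p-1} \not\equiv 1 \bmod p^2$ in case (2), which is what the theorem asserts.

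I expect the main obstacle to be the bookkeeping in the $\overline{\wp}$-adic expansion — in particular convincing oneself that the hypothesis $n \ge 2$ (together with $p$ odd, so $2 \in \mathbb{Z}_p^\times$) is exactly what is needed to kill every correction term in the Hensel-style computation of $\sqrt{x_1^2 - p^n}$ modulo $p^2$. Once this is in hand, the identification of $(\xi)$ with $\wp^n$ and the invocation of Theorem \ref{thm1.1} are routine.
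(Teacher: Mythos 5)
Your proposal is correct, and its skeleton is the same as the paper's: you establish $(\xi)=\wp^n$ by the coprimality of $(\xi)$ and $(\overline{\xi})$, invoke Sands' criterion (Theorem \ref{thm1.1}) with $m_1=n$ (legitimate since $\gcd(p,n)=1$), and then reduce the condition $\xi^{p-1}\not\equiv 1 \bmod \overline{\wp}^2$ to an integer congruence. Where you diverge is in that last reduction. The paper stays global: it rewrites the condition as $\xi^{p}-\xi\not\equiv 0 \bmod \overline{\wp}^2\wp^n$, expands $(x_1+\sqrt{x_1^2-p^n})^p$ binomially, and evaluates the resulting sums $\sum_j\binom{p}{2j}$, $\sum_j\binom{p}{2j+1}$ to arrive at $\xi\,(2^{p-1}x_1^{p-1}-1)$ modulo $\overline{\wp}^2\wp^n$. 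You instead pass to the $\overline{\wp}$-adic completion, identify $\mathcal{O}_K/\overline{\wp}^2$ with $\mathbb{Z}/p^2\mathbb{Z}$, and observe that the square root lying over $\overline{\wp}$ satisfies $\sqrt{x_1^2-p^n}\equiv x_1\bmod p^2$ because $n\ge 2$ (one can even avoid the binomial series here: if $y^2=x_1^2-p^n\equiv x_1^2\bmod p^2$ and $y\equiv x_1\bmod \overline{\wp}$, then $(y-x_1)(y+x_1)\equiv 0$ with $y+x_1$ a unit forces $y\equiv x_1\bmod p^2$). This gives $\xi\equiv 2x_1$ (resp.\ $\xi\equiv x_1$) modulo $\overline{\wp}^2$ directly, which is shorter and makes transparent why the answer is $(2x_1)^{p-1}$ in case (1) and $x_1^{p-1}$ in case (2); the paper's computation buys nothing extra here beyond being elementary and self-contained. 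Both arguments use the hypotheses in the same places, so there is no gap.
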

%%%%%%%%%%%%%%%%%
Applying Theorem \ref{thm3.1} (2) to $x_1 = 1$, we obtain the imaginary quadratic fields treated in Theorem \ref{thm14}.
Applying Theorem \ref{thm3.1} (1) to $x_1 = 1$, $2$, $q_1$, $2q_1$, we obtain the imaginary quadratic fields treated in Theorem \ref{thm12}.
Here, we give some additional explanation for the case where $x_1 = 2$.
Note that $2^{p-1} \not \equiv -1 \bmod p^2$.
In fact, if $2^{p-1} \equiv -1 \bmod p^2$, the congruence relation $2^{p-1} + 1 \equiv 0 \bmod p$ and $2^{p-1} - 1 \equiv 0 \bmod p$ hold.
On the other hand, $\gcd(2^{p-1} + 1, 2^{p-1} - 1) = 1$, a contradiction. 
Therefore, if $2^{p-1} \not \equiv 1 \bmod p^2$, then $2^{2(p-1)} \not \equiv 1 \bmod p^2$.
Thus, using Theorem \ref{thm3.1} (1) for the case where $x_1 = 2$,
we can find that if $p \nmid h(\mathbb{Q}(\sqrt{4 - p^n}))$ and $2^{p-1} \not \equiv 1 \bmod p^2$, then $\lambda_p(\mathbb{Q}(\sqrt{4 - p^n})) = 1$.    
%%%%%%%%%%%%%%%%%

To show Theorems \ref{thm14.1} and \ref{thm12.1}, we will prove the following theorem.
%%%%%%%%%%%%%%%%%
\begin{thm} \label{thm3.2}
Let $p$ be an odd prime number, $q_1$ a prime factor of $p - 2$ such that $q_1^{p-1} \not \equiv 1 \bmod p^2$, and $n$ an integer greater than $1$.\\
%%%
$(1)$ \ Assume $p \equiv 3\bmod 4$.\\
\hspace{10pt}{\rm (i)} \ Suppose $p \neq 3$.
If $n$ is an odd integer, then the order of the ideal class containing the prime ideal of the imaginary quadratic field $\mathbb{Q}(\sqrt{1 - p^n})$ over $p$ is $n$.\\
\hspace{10pt}{\rm (ii)} \ Suppose $p = 3$.
If $n$ is an odd integer such that $n \neq 5$, 
then the order of the ideal class containing the prime ideal of the imaginary quadratic field $\mathbb{Q}(\sqrt{1 - p^n})$ over $p$ is $n$.
When $n = 5$, we see that $\mathbb{Q}(\sqrt{1 - 3^5}) = \mathbb{Q}(\sqrt{-2})$.
The class number of $\mathbb{Q}(\sqrt{-2})$ is $1$ 
and the order of the ideal class containing the prime ideal of $\mathbb{Q}(\sqrt{-2})$ over $3$ is $1$.\\
\hspace{10pt}{\rm (iii)} \ Suppose $2^{p-1} \equiv 1 \bmod p^2$.
If $n$ is an odd composite number, then the order of the ideal class containing the prime ideal of the imaginary quadratic field 
$\mathbb{Q}(\sqrt{q_1^2 - p^n})$ over $p$ is $n$.\\   
$(2)$ \ Assume $p \equiv 1\bmod 4$.\\
\hspace{10pt}{\rm (i)} \ Suppose $\mathbb{Q}(\sqrt{4 - p^n}) \neq \mathbb{Q}(\sqrt{-1})$.
Then, the order of the ideal class containing the prime ideal of the imaginary quadratic field 
$\mathbb{Q}(\sqrt{4 - p^n})$ over $p$ is $n$ (see \cite[Theorem 1 (2)]{It11}).\\
\hspace{10pt}{\rm (ii)} \ Suppose $2^{p-1} \equiv 1 \bmod p^2$ and $\mathbb{Q}(\sqrt{4q_1^2 - p^n}) \neq \mathbb{Q}(\sqrt{-1})$.
If $n$ is a composite number, then the order of the ideal class containing the prime ideal of the imaginary quadratic field 
$\mathbb{Q}(\sqrt{4q_1^2 - p^n})$ over $p$ is $n$.\\
$(3)$ \ If $n$ is greater than $8$, 
then the order of the ideal class containing the prime ideal of the imaginary quadratic field 
$\mathbb{Q}(\sqrt{1 - 4p^n})$ over $p$ is $n$ (see \cite[Theorem]{Is} or \cite{It}).
%%%
\end{thm}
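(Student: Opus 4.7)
The plan is to prove each case of Theorem~\ref{thm3.2} by first exhibiting an explicit generator of $\wp^n$ (thereby showing that the order of $[\wp]$ in $\mathrm{Cl}(K)$ divides $n$) and then deriving a contradiction from the hypothetical principality of $\wp^m$ for a proper divisor $m\mid n$. The generator comes from the identity
\[
(x_1+\sqrt{x_1^2-p^n})(x_1-\sqrt{x_1^2-p^n})=p^n,
\]
with $x_1\in\{1,q_1,2q_1\}$ in the new cases (1)(i)--(iii) and (2)(ii), and from $\bigl((1+\sqrt{1-4p^n})/2\bigr)\bigl((1-\sqrt{1-4p^n})/2\bigr)=p^n$ in (3). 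Since $p$ splits in $K$ as $(p)=\wp\overline{\wp}$ and $p\nmid x_1$, we may label so that $\wp^n=(x_1+\sqrt{x_1^2-p^n})$ as an ideal.

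For the reverse direction, assume $\wp^m=(\alpha)$ with $m\mid n$ and $m<n$, set $d:=n/m\geq 2$, and observe that $\mathcal{O}_K^{\times}=\{\pm 1\}$: the exceptional fields $\mathbb{Q}(\sqrt{-1})$ and $\mathbb{Q}(\sqrt{-3})$ are either excluded by hypothesis or ruled out by a short residue check on $x_1^2-p^n$. Thus $\alpha^d=\varepsilon(x_1+\sqrt{x_1^2-p^n})$ for some $\varepsilon\in\{\pm 1\}$. A residue computation shows $x_1^2-p^n\equiv 2$ or $3\pmod 4$ in each new case, so $\mathcal{O}_K=\mathbb{Z}[\sqrt{d_0}]$ where $d_0$ is the squarefree part of $x_1^2-p^n$. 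Writing $\alpha=a+b\sqrt{d_0}$ and $\sqrt{x_1^2-p^n}=t\sqrt{d_0}$, I expand $\alpha^d=U+V\sqrt{d_0}$ via the binomial theorem and match $U=\varepsilon x_1$, $V=\varepsilon t$. Since
\[
U=\sum_{k=0}^{(d-1)/2}\binom{d}{2k}a^{d-2k}b^{2k}d_0^{k}
\]
(with $d$ odd in (1)(i)--(iii); the possible even-$d$ case in (2)(ii) is reduced to an odd-$d$ case by iterated squaring $\alpha\mapsto\alpha^2$), and every term carries a positive power of $a$, one gets $a\mid x_1$, limiting $a$ to divisors of $x_1$. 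Symmetrically $b\mid t$, so writing $t=bc$ and eliminating $|d_0|$ between $a^2+b^2|d_0|=p^m$ and $|d_0|t^2=p^n-x_1^2$ yields
\[
c^2\,(p^m-a^2)=p^n-x_1^2.
\]

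In the principal sub-case $a=\pm 1$ and $x_1=1$ (covering (1)(i) and (1)(ii)), this reduces to
\[
c^2=1+p^m+p^{2m}+\dots+p^{(d-1)m},
\]
precisely the Nagell--Ljunggren equation $(X^d-1)/(X-1)=Y^2$ with $X=p^m$. By Ljunggren's classical theorem the only solution with odd $d\geq 3$ is $(p^m,d)=(3,5)$, which is ruled out by the hypothesis $p\neq 3$ in (1)(i) or $n\neq 5$ in (1)(ii). For the sub-cases $a\in\{\pm q_1,\pm 2q_1\}$ arising in (1)(iii) and (2)(ii), the resulting identity $c^2=(p^n-x_1^2)/(p^m-a^2)$ lies outside the classical Nagell--Ljunggren framework; here the compositeness of $n$ permits a descent through a nontrivial factorization $d=d_1 d_2$, and the non-Wieferich condition $q_1^{p-1}\not\equiv 1\pmod{p^2}$ is used to obstruct the resulting $p$-adic congruences. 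Parts (2)(i) and (3) are quoted from \cite{It11} and \cite{Is}/\cite{It} respectively. The main obstacle I anticipate is the case $a\neq\pm 1$: without Ljunggren's theorem to invoke directly, the elimination has to be done by hand for each divisor $a$ of $x_1$, and it is precisely there that the arithmetic hypotheses on $q_1$ and the compositeness of $n$ are indispensable.
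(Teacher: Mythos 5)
Your overall architecture coincides with the paper's: exhibit $\wp^n=(x_1+\sqrt{x_1^2-p^n})$, suppose $\wp^m$ is principal for a proper divisor $m\mid n$, use $\mathcal{O}_K^\times=\{\pm1\}$ to write $\pm(x_1+\sqrt{x_1^2-p^n})=\eta^{d}$ with $d=n/m$, and derive a contradiction from the coefficient equations. For parts (1)(i)--(ii) your endgame is genuinely different and does work: after forcing $a=\pm1$ you land on the Nagell--Ljunggren equation $(p^{md}-1)/(p^m-1)=c^2$ and invoke Ljunggren's theorem, whose unique odd-exponent solution $(3,5,11)$ is exactly the excluded pair $(p,n)=(3,5)$. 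The paper instead reduces to the statement that $d_1x^2+1=p^y$ has at most one positive solution (Lemma \ref{lem3.3}), proved via Bugeaud--Shorey's theorem on generalized Ramanujan--Nagell equations together with Leu--Li for $(d_1,p)=(2,3)$; the two inputs are of comparable depth, and your route is arguably shorter for this sub-case.

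However, for (1)(iii) and (2)(ii) --- the genuinely new cases --- your proposal stops where the proof actually begins, and you say so yourself. Three things are missing. First, for the divisors $a=\pm q_1$ (resp.\ $\pm 2q_1$), the identity $c^2(p^m-a^2)=p^n-x_1^2$ amounts to the equation $|d_0|x^2+q_1^2=p^y$ (resp.\ $|d_0|x^2+4q_1^2=p^y$) having two positive solutions; ruling this out is not a ``$p$-adic congruence'' but a Diophantine finiteness statement, which the paper obtains from Bugeaud--Shorey (Lemmas \ref{lem3.7} and \ref{lem3.10}) after excluding the exceptional families $\mathfrak{F},\mathfrak{G},\mathfrak{H}_1$ via Cohn's theorem on squares in the Lucas sequence and the auxiliary Lemmas \ref{lem3.6} and \ref{lem3.9}. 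Second, for the small divisors $a=\pm1$ (and $\pm2$ in (2)(ii)) it is the congruence argument that carries the day: compositeness of $n$ gives $p^{n/p_4}\equiv0\bmod p^2$, whence $\pm q_1\equiv 2^{p_4-1}\bmod p^2$ and then $q_1^{p-1}\equiv(2^{p-1})^{p_4-1}\equiv1\bmod p^2$ by the Wieferich hypothesis, contradicting the choice of $q_1$; you gesture at this but do not write it down, and your division of labour (Ljunggren-type input for small $a$, congruences for $a=\pm q_1$) is the reverse of what actually works. Third, your treatment of even $d$ in (2)(ii) fails as stated: when $d$ is a power of $2$, iterated squaring never produces an odd exponent, so you must separately show $\pm\alpha$ is not a square; the paper does this by comparing imaginary parts ($\pm c=2uv$ with $c$ odd), and a similar parity argument is also needed to eliminate the intermediate divisor $u=\pm q_1$ when $x_1=2q_1$. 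Until these steps are supplied, (1)(iii) and (2)(ii) are not proved.
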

%%%%%%%%%%%%%%%%%
Theorems \ref{thm14.1} and \ref{thm12.1} follow from Theorem \ref{thm3.2} immediately.
In Section \ref{sec4.1}, we show Theorem \ref{thm3.1}.
In Section \ref{sec4.2}, we prove Theorem \ref{thm3.2}.
%%%%%%%%%%%%%%%%%%%%%%%%%%%%%%%%%%%%%%%%%
%%%%%%%%%%%%%%%%%%%%%%%%%%%%%%%%%%%%%%%%%
%%%%%%%%%%%%%%%%%%%%%%%%%%%%%%%%%%%%%%%%%
\subsection{Proof of Theorem \ref{thm3.1}} \label{sec4.1}
In this section, we prove Theorem \ref{thm3.1}.
The method of the proof is the same one that is used in Section \ref{sec2}.
We use Theorem \ref{thm1.1} mainly.\\
%%%%%%%%%%%%%%%%%
~\\
{\bf Proof of Theorem \ref{thm3.1}.}
First, we treat $\mathbb{Q}(\sqrt{x_1^2 - p^n})$.
We can write 
$$(p)^n = (x_1 + \sqrt{x_1^2 - p^n})(x_1 - \sqrt{x_1^2 - p^n})$$
in $\mathbb{Q}(\sqrt{x_1^2 - p^n})$.
Put $\alpha := x_1 + \sqrt{x_1^2 - p^n}$.
Since $\gcd(p, x_1^2 - p^n) = 1$ and $p \nmid \alpha$ hold, $p$ splits in $\mathbb{Q}(\sqrt{x_1^2 - p^n})$.
Note that ideals $(\alpha)$ and $(\overline{\alpha})$ are coprime,
where $\overline{\alpha}$ is the complex conjugate of $\alpha$.
Then, 
$$(\alpha) = \wp^n,$$
where $\wp$ is the prime ideal of $\mathcal{O}_{\mathbb{Q}(\sqrt{x_1^2 - p^n})}$ over $p$.
Hence it follows from Theorem \ref{thm1.1} that $\lambda_p(\mathbb{Q}(\sqrt{x_1^2 - p^n})) = 1$
if and only if $p \nmid h(\mathbb{Q}(\sqrt{x_1^2 - p^n}))$ and
\begin{equation}
(x_1 + \sqrt{x_1^2 - p^n})^{p-1} - 1 \not \equiv 0 \bmod \overline{\wp}^2, \label{0}
\end{equation}
where $\overline{\wp}$ is the complex conjugate of $\wp$.
Equation (\ref{0}) is equivalent to
\begin{equation}
(x_1 + \sqrt{x_1^2 - p^n})^p - (x_1 + \sqrt{x_1^2 - p^n}) \not \equiv 0 \bmod \overline{\wp}^2\wp^n.  \label{1}
\end{equation}
We see that
\begin{equation}
 \begin{split}
 &(x_1 + \sqrt{x_1^2 - p^n})^p - (x_1 + \sqrt{x_1^2 - p^n}) \notag \\ 
= &\biggl\{\sum_{j=0}^p \binom{p}{j} x_1^{p-j}(\sqrt{x_1^2 - p^n})^j\biggr\} - (x_1 + \sqrt{x_1^2 - p^n}) \notag \\
= &\biggl\{\sum_{j=0}^{\frac{p-1}{2}} \binom{p}{2j} x_1^{p-2j}(x_1^2 - p^n)^j\biggr\} \notag \\
&+ \biggl\{\sum_{j=0}^{\frac{p-1}{2}}\binom{p}{2j+1}x_1^{p-2j-1}(x_1^2 - p^n)^j\sqrt{x_1^2 - p^n}\biggr\} - x_1 - \sqrt{x_1^2 - p^n}. \notag \\
 \end{split}
\end{equation}                       
When $n \ge 2$, we have $p^n \equiv 0 \bmod \overline{\wp}^2\wp^n$.
Then, 
\begin{equation}
 \begin{split}
 &(x_1 + \sqrt{x_1^2 - p^n})^p - (x_1 + \sqrt{x_1^2 - p^n}) \notag \\  
\equiv &\biggl\{\sum_{j=0}^{\frac{p-1}{2}} \binom{p}{2j} x_1^{p-2j}x_1^{2j}\biggr\}
 + \biggl\{\sum_{j=0}^{\frac{p-1}{2}}\binom{p}{2j+1}x_1^{p-2j-1}x_1^{2j}\sqrt{x_1^2 - p^n} \biggr\} \notag \\
 &- x_1 - \sqrt{x_1^2 - p^n} \notag \\
\equiv &\hspace{5pt}x_1^p \biggl\{\sum_{j=0}^{\frac{p-1}{2}} \binom{p}{2j}\biggr\}
 + x_1^{p-1}\sqrt{x_1^2 - p^n} \biggl\{\sum_{j=0}^{\frac{p-1}{2}}\binom{p}{2j+1}\biggr\} - x_1 - \sqrt{x_1^2 - p^n} \notag \\  
\equiv &\hspace{5pt}2^{p-1}x_1^p + 2^{p-1}x_1^{p-1}\sqrt{x_1^2 - p^n} - x_1 - \sqrt{x_1^2 - p^n} \notag \\
\equiv &\hspace{5pt}(x_1 + \sqrt{x_1^2 - p^n})(2^{p-1}x_1^{p-1} - 1) \bmod \overline{\wp}^2\wp^n. \notag \\
 \end{split}
\end{equation}
%%%      
Equation (\ref{1}) is true if and only if
$$(2x_1)^{p-1} - 1 \not \equiv 0 \bmod p^2$$
by the above congruence relation.
Therefore, $\lambda_p(\mathbb{Q}(\sqrt{x_1^2 - p^n})) = 1$ if and only if $p \nmid h(\mathbb{Q}(\sqrt{x_1^2 - p^n}))$ and $(2x_1)^{p-1} \not \equiv 1 \bmod p^2$.
The proof of (1) is completed.\\
%%%
\hspace{17pt}Next, we treat $\mathbb{Q}(\sqrt{x_1^2 - 4p^n})$.
We can write 
$$(p)^n = \biggl(\frac{x_1 + \sqrt{x_1^2 - 4p^n}}{2}\biggr)\biggl(\frac{x_1 - \sqrt{x_1^2 - 4p^n}}{2}\biggr)$$
in $\mathbb{Q}(\sqrt{x_1^2 - 4p^n})$.
Put $\beta := \dfrac{x_1 + \sqrt{x_1^2 - 4p^n}}{2}$.
Since $\gcd(p, x_1^2 - 4p^n) = 1$ and $p \nmid \beta$ hold, $p$ splits in $\mathbb{Q}(\sqrt{x_1^2 - 4p^n})$.
Note that ideals $(\beta)$ and $(\overline{\beta})$ are coprime,
where $\overline{\beta}$ is the complex conjugate of $\beta$.
Then, 
$$(\beta) = \wp^n,$$
where $\wp$ is the prime ideal of $\mathcal{O}_{\mathbb{Q}(\sqrt{x_1^2 - 4p^n})}$ over $p$.
Hence it follows from Theorem \ref{thm1.1} that $\lambda_p(\mathbb{Q}(\sqrt{x_1^2 - 4p^n})) = 1$
if and only if $p \nmid h(\mathbb{Q}(\sqrt{x_1^2 - 4p^n}))$ and
\begin{equation}
\biggl(\frac{x_1 + \sqrt{x_1^2 - 4p^n}}{2}\biggr)^{p-1} - 1 \not \equiv 0 \bmod \overline{\wp}^2, \label{2}
\end{equation}
where $\overline{\wp}$ is the complex conjugate of $\wp$.
Equation (\ref{2}) is equivalent to
\begin{equation}
\biggl(\frac{x_1 + \sqrt{x_1^2 - 4p^n}}{2}\biggr)^p - \frac{x_1 + \sqrt{x_1^2 - 4p^n}}{2} \not \equiv 0 \bmod \overline{\wp}^2\wp^n.  \label{3}
\end{equation}
We see that
\begin{equation}
 \begin{split}
 &\biggl(\frac{x_1 + \sqrt{x_1^2 - 4p^n}}{2}\biggr)^p - \frac{x_1 + \sqrt{x_1^2 - 4p^n}}{2} \notag \\ 
= &\hspace{5pt}\frac{1}{2^p}(x_1 + \sqrt{x_1^2 - 4p^n})^p - \frac{1}{2}(x_1 + \sqrt{x_1^2 - 4p^n}) \notag \\ 
= &\hspace{5pt}\frac{1}{2^p}\biggl\{\sum_{j=0}^p \binom{p}{j} x_1^{p-j}(\sqrt{x_1^2 - 4p^n})^j\biggr\} - \frac{1}{2}(x_1 + \sqrt{x_1^2 - 4p^n}) \notag \\ 
= &\hspace{5pt}\frac{1}{2^p}\biggl\{\sum_{j=0}^{\frac{p-1}{2}} \binom{p}{2j} x_1^{p-2j}(x_1^2 - 4p^n)^j \notag \\
   &+ \sum_{j=0}^{\frac{p-1}{2}}\binom{p}{2j+1}x_1^{p-2j-1}(x_1^2 - 4p^n)^j\sqrt{x_1^2 - 4p^n}\biggr\} - \frac{1}{2}(x_1 + \sqrt{x_1^2 - 4p^n}). \notag \\ 
 \end{split}
\end{equation}                       
When $n \ge 2$, we have $p^n \equiv 0 \bmod \overline{\wp}^2\wp^n$.
Then, 
\begin{equation}
 \begin{split}
 &\biggl(\frac{x_1 + \sqrt{x_1^2 - 4p^n}}{2}\biggr)^p - \frac{x_1 + \sqrt{x_1^2 - 4p^n}}{2} \notag \\  
\equiv &\hspace{5pt}\frac{1}{2^p}\biggl\{\sum_{j=0}^{\frac{p-1}{2}} \binom{p}{2j} x_1^{p-2j}x_1^{2j} 
+ \sum_{j=0}^{\frac{p-1}{2}}\binom{p}{2j+1}x_1^{p-2j-1}x_1^{2j}\sqrt{x_1^2 - 4p^n}\biggr\} \notag \\ 
&- \frac{1}{2}(x_1 + \sqrt{x_1^2 - 4p^n}) \notag \\   
\equiv &\hspace{5pt}\frac{1}{2^p}\biggl\{x_1^p \sum_{j=0}^{\frac{p-1}{2}} \binom{p}{2j} + x_1^{p-1}\sqrt{x_1^2 - 4p^n} \sum_{j=0}^{\frac{p-1}{2}}\binom{p}{2j+1}\biggr\} - \frac{1}{2}(x_1 + \sqrt{x_1^2 - 4p^n}) \notag \\
\equiv &\hspace{5pt}\frac{1}{2^p}(2^{p-1}x_1^p + 2^{p-1}x_1^{p-1}\sqrt{x_1^2 - 4p^n}) - \frac{1}{2}(x_1 + \sqrt{x_1^2 - 4p^n}) \notag \\
\equiv &\hspace{5pt}\frac{1}{2}(x_1^p + x_1^{p-1}\sqrt{x_1^2 - 4p^n}) - \frac{1}{2}(x_1 + \sqrt{x_1^2 - 4p^n}) \notag \\
\equiv &\hspace{5pt}\frac{1}{2}(x_1 + \sqrt{x_1^2 - 4p^n})(x_1^{p-1} - 1) \bmod \overline{\wp}^2\wp^n. \notag \\
 \end{split}
\end{equation}                       
Equation (\ref{3}) is true if and only if
$$x_1^{p-1} - 1 \not \equiv 0 \bmod p^2$$
by the above congruence relation.
Therefore, $\lambda_p(\mathbb{Q}(\sqrt{x_1^2 - 4p^n})) = 1$ if and only if $p \nmid h(\mathbb{Q}(\sqrt{x_1^2 - 4p^n}))$ and $x_1^{p-1} \not \equiv 1 \bmod p^2$.
The proof of (2) is completed.
%%%%%%%%%%%%%%%%%
%%%%%%%%%%%%%%%%%
%%%%%%%%%%%%%%%%%
%%%%%%%%%%%%%%%%%
\subsection{Proof of Theorem \ref{thm3.2}} \label{sec4.2}
In this section, we show Theorem \ref{thm3.2} (1) and (2) (ii).
Theorem \ref{thm3.2} (2) (i) and (3) are already proved (see \cite{It11}, \cite{Is}, and \cite{It}).
First, we sketch the outline of the proof.
The method of the proof is based on the one in \cite{Ki} and \cite{It11}.
%%%%%%%%%%%%%%%%%%

Let $p$ be an odd prime number and $n$ a positive integer satisfying the assumption in Theorem \ref{thm3.2} (1) and (2) (ii).
We treat the imaginary quadratic field $\mathbb{Q}(\sqrt{x_1^2 - p^n})$, where $x_1 = 1$, $q_1$, $2q_1$.
We can write 
$$(p)^n = (x_1 + \sqrt{x_1^2 - p^n})(x_1 - \sqrt{x_1^2 - p^n})$$
in $\mathbb{Q}(\sqrt{x_1^2 - p^n})$.
Put $\alpha := x_1 + \sqrt{x_1^2 - p^n}$.
Since $\gcd(p, x_1^2 - p^n) = 1$ and $p \nmid \alpha$ hold, the prime number $p$ splits in $\mathbb{Q}(\sqrt{x_1^2 - p^n})$.
Note that ideals $(\alpha)$ and $(\overline{\alpha})$ are coprime,
where $\overline{\alpha}$ is the complex conjugate of $\alpha$.
Then, 
$$(\alpha) = \wp^n,$$
where $\wp$ is the prime ideal of $\mathcal{O}_{\mathbb{Q}(\sqrt{x_1^2 - p^n})}$ over $p$.
We will prove the order of the ideal class containing $\wp$ is $n$.
It is essential for this proof to show that $\pm \alpha$ is not a $p_4$-th power in $\mathcal{O}_{\mathbb{Q}(\sqrt{x_1^2 - p^n})}$
 for any prime number $p_4$ dividing $n$.
In fact, we can prove this by checking that there is no integer solution $(u, v)$ of the equation 
$$\pm \alpha = (u + v\sqrt{d_0})^{p_4},$$
where $d_0$ is the square-free part of $x_1^2 - p^n$.
%%%%%%%%%%%%%%%%%%%  

Next, we give a detailed proof of this. 
In Section \ref{sec4.2.1}, we prove Theorem \ref{thm3.2} (1) (i) and (ii). 
In Section \ref{sec4.2.2}, we show Theorem \ref{thm3.2} (1) (iii).
In Section \ref{sec4.2.3}, we prove Theorem \ref{thm3.2} (2) (ii).
%%%%%%%%%%%%%%%%%%%%%%%%%%%%%%%%%%%%%%%%%
%%%%%%%%%%%%%%%%%%%%%%%%%%%%%%%%%%%%%%%%%
\subsubsection{Proof of Theorem \ref{thm3.2} (1) (i) and (ii)} \label{sec4.2.1}  
In this section, we show Theorem \ref{thm3.2} (1) (i) and (ii).
First, we prepare the following lemma.
%%%%%%%%%%%%%%%%%%%%%%%%%
\begin{lem}\label{lem3.3}
Let $p$ be an odd prime number.
Then, for any given positive even integer $d_1$, the number of positive integer solutions $(x, y)$ of the equation 
$$d_1x^2 + 1 = p^y$$
is at most one except for $(d_1, p) = (2, 3)$.
When $(d_1, p) = (2, 3)$, the number of positive integer solutions $(x, y)$ of the above equation is three 
and the solutions are $(x, y) = (1, 1)$, $(2, 2)$, $(11, 5)$.
\end{lem}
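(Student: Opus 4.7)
The plan is to translate the equation $d_1 x^2 + 1 = p^y$ into the imaginary quadratic field $K = \mathbb{Q}(\sqrt{-d_1})$ and reduce the uniqueness statement to a question about when a Lucas companion sequence attains the value $\pm 1$. A solution yields the factorization
$$p^y = (1 + x\sqrt{-d_1})(1 - x\sqrt{-d_1})$$
in $\mathcal{O}_K$; since $p \nmid x$ (otherwise $p \mid 1$) and $p$ is odd, the two factors are coprime, so $p$ splits as $(p) = \mathfrak{p}\overline{\mathfrak{p}}$ and, after possibly swapping the two primes, $(1 + x\sqrt{-d_1}) = \mathfrak{p}^y$ as ideals.

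Given two positive solutions $(x_1, y_1)$ and $(x_2, y_2)$ with $y_1 \le y_2$, both $\mathfrak{p}^{y_1}$ and $\mathfrak{p}^{y_2}$ are principal, so the order $g$ of the class of $\mathfrak{p}$ in the ideal class group of $K$ divides $y_1$ and $y_2$. Fix a generator $\pi = a + b\sqrt{-d_1}$ of $\mathfrak{p}^g$ (so $a^2 + d_1 b^2 = p^g$), and expand $\pi^n = A_n + B_n\sqrt{-d_1}$ with $A_n, B_n \in \mathbb{Z}$. Then
$$\pi^{y_i/g} = \varepsilon_i\bigl(1 + x_i\sqrt{-d_1}\bigr)$$
for some unit $\varepsilon_i \in \mathcal{O}_K^\times$. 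Since $d_1$ is even, $\mathcal{O}_K^\times = \{\pm 1\}$ unless $K = \mathbb{Q}(\sqrt{-1})$ (i.e.\ $d_1$ is four times a square) or $K = \mathbb{Q}(\sqrt{-3})$ (i.e.\ $d_1$ is twelve times a square); these two sporadic cases I would dispatch separately by incorporating the additional roots of unity. In the generic case the relation reduces to $A_{y_i/g} = \pm 1$, where $A_n$ satisfies the Lucas companion recurrence $A_0 = 1$, $A_1 = a$, $A_{n+2} = 2a A_{n+1} - p^g A_n$.

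The main obstacle, and where I expect the proof to concentrate, is bounding by one the set of indices $n \ge 1$ at which $A_n = \pm 1$, except in the case $(d_1, p) = (2, 3)$. I would combine two ingredients: a growth estimate based on $A_n^2 + d_1 B_n^2 = p^{gn}$ together with the fact that $|A_n|$ grows like $p^{gn/2}$ unless $B_n$ sits abnormally close to $|A_n|/\sqrt{d_1}$, reducing the question to a bounded explicit range of $n$; and a divisibility/congruence argument, using the recurrence modulo $p$ (whose characteristic polynomial $T^2 - 2aT + p^g$ becomes $(T - a)^2$ modulo $p$) to pin down the possible residues of $n$. The surviving finite case analysis isolates $\pi = 1 + \sqrt{-2}$, $p = 3$, $g = 1$ as the only exception, for which direct computation gives $A_1 = 1$, $A_2 = -1$, $A_5 = 1$, corresponding precisely to the listed solutions $(x, y) = (1, 1), (2, 2), (11, 5)$; verifying that no other $(d_1, p)$ admits a second small index with $A_n = \pm 1$, and that the sporadic unit cases contribute nothing new, is the delicate computational step.
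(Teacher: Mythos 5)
There is a genuine gap, and it sits exactly where you place the ``main obstacle.'' Your reduction to the statement that the companion sequence $A_n$ (with $A_0=1$, $A_1=a$, $A_{n+2}=2aA_{n+1}-p^gA_n$) takes the value $\pm1$ for at most one index $n\ge1$ outside $(d_1,p)=(2,3)$ is a correct reformulation, but it is not a reduction in difficulty: that statement \emph{is} the lemma, in the standard Lucas--Lehmer disguise, and the two ingredients you propose cannot establish it. The identity $A_n^2+d_1B_n^2=p^{gn}$ gives only the upper bound $|A_n|\le p^{gn/2}$ and no lower bound whatsoever, so it does not confine $n$ to ``a bounded explicit range''; and the recurrence modulo $p$, whose characteristic polynomial degenerates to $(T-a)^2$, constrains $n$ only modulo the multiplicative order of $a \bmod p$, which bounds nothing. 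Controlling the indices at which a Lucas-type sequence hits $\pm1$ is precisely the content of the theory of primitive divisors (Bilu--Hanrot--Voutier) and of the uniqueness theorems for the generalized Ramanujan--Nagell equation; the known proofs use that machinery or hypergeometric/linear-forms-in-logarithms methods, none of which your sketch supplies. As written, the ``delicate computational step'' at the end is not a finite computation at all.

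The paper does not attempt to reprove this. It quotes the Bugeaud--Shorey theorem (Theorem~\ref{thm3.4}, from [BS01, Theorem 1]) for the equation $d_1x^2+d_2=\gamma^2p^y$ with $\gamma=1$, $d_2=1$, and the entire proof of Lemma~\ref{lem3.3} consists of verifying that $(1,d_1,1,p)$ avoids the exceptional sets: $(1,d_1,1,p)\notin\mathfrak{E}\smallsetminus\{(1,2,1,3)\}$ and $(d_1,1,p)\notin\mathfrak{G}$ by inspection; $(d_1,1,p)\in\mathfrak{F}$ would force $\mathcal{L}_{h+\varepsilon}=1$, hence $h=2$ and $\mathcal{F}_2=1=p$, a contradiction; and $(d_1,1,p)\in\mathfrak{H}_1$ would force $3p^{t_0}=3$ or $5$. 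The exceptional case $(d_1,p)=(2,3)$ is then handled by citing Leu--Li for the three solutions of $2x^2+1=3^y$. If you want a self-contained argument you must either import such a black box explicitly or supply the primitive-divisor input; otherwise the proof is incomplete at its central step.
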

%%%%%%%%%%%%%%%%%%%%%%%%%
We prove this by using a result of Y.~Bugeaud and T.~N.~Shorey~\cite{BS01}.
We state their result here.
%%%%%%%%%%%%%%%%%%%%%%%%%

We denote by $(\mathcal{F}_n)$ the Fibonacci sequence defined by
$\mathcal{F}_0 := 0$, $\mathcal{F}_1 := 1$, and satisfying $\mathcal{F}_{n+2} := \mathcal{F}_{n+1} + \mathcal{F}_{n}$ for all $n \ge 0$.
We denote by $(\mathcal{L}_n)$ the Lucas sequence defined by $\mathcal{L}_0 := 2$, $\mathcal{L}_1 := 1$,
 and satisfying $\mathcal{L}_{n+2} := \mathcal{L}_{n+1} + \mathcal{L}_{n}$ for all $n \ge 0$.
Then set
$$\mathfrak{F} := \{(\mathcal{F}_{h-2\varepsilon}, \mathcal{L}_{h+\varepsilon}, \mathcal{F}_h) \ | \ h \in \mathbb{N} \ s.t. \ h \ge 2 \ {\rm and} \ \varepsilon \in \{\pm 1\}\}$$
and
$$\mathfrak{G} := \{(1, 4p_5^{h_1} - 1, p_5) \ | \ p_5 \ {\rm is \ a \ prime \ number \ and} \ h_1 \in \mathbb{N}\}.$$
Bugeaud and Shorey gave the following result.
%%%%%%%%%%%%%%%%%%%%%%%%  
\begin{thm}[Bugeaud and Shorey, {[1, Theorem 1]}]\label{thm3.4} 
Let $d_1$ and $d_2$ be coprime positive integers and let $p$ be a prime number with $\gcd(d_1d_2, p) = 1$.
Let $\gamma \in \{1, \sqrt{2}, 2\}$ be such that $\gamma = 2$ if $p = 2$.
We assume that $d_2$ is odd if $\gamma \in \{\sqrt{2}, 2\}$.
Then, the number of positive integer solutions $(x, y)$ of the equation
$$d_1x^2 + d_2 = \gamma^2p^y$$
is at most one except for
\begin{eqnarray}
(\gamma, d_1, d_2, p) \in \mathfrak{E} := \left\{
\begin{array}{c}
(2, 13, 3, 2), (\sqrt{2}, 7, 11, 3), (1, 2, 1, 3), (2, 7, 1, 2), \nonumber \\
(\sqrt{2}, 1, 1, 5), (\sqrt{2}, 1, 1, 13), (2, 1, 3, 7). \nonumber \\
\end{array}
\right \}
\end{eqnarray}
or
$$(d_1, d_2, p) \in \mathfrak{F}\cup \mathfrak{G}\cup \mathfrak{H}_{\gamma},$$
where $\mathfrak{H}_{\gamma}$ denotes the set  
%%%
\begin{eqnarray}
\mathfrak{H}_{\gamma} := \left\{(d_1, d_2, p)\hspace{5pt}
\begin{array}{|c}
there \ exist \ positive \ integers \ s_0 \ and \ t_0 \nonumber \\
such \ that \ d_1s_0^2 + d_2 = \gamma^2p^{t_0} \ and \nonumber \\ 
3d_1s_0^2 - d_2 = \pm \gamma^2 \nonumber \\
\end{array}
\right \}.
\end{eqnarray}
%%%
\end{thm}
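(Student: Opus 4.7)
The plan is to translate the Diophantine equation $d_1 x^2 + d_2 = \gamma^2 p^y$ into an equation about Lucas (or Lehmer) sequences and apply the Bilu--Hanrot--Voutier primitive divisor theorem. First, I would set aside the finitely many sporadic cases in $\mathfrak{E}$ by direct inspection, and reduce to the situation in which $d_1 d_2 > 1$, $\gcd(d_1,d_2)=1$, $\gcd(d_1 d_2, p)=1$, and $d_1 d_2$ is not a perfect square, so that the algebraic number field $K := \mathbb{Q}(\sqrt{-d_1 d_2})$ is an imaginary quadratic field.

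Second, suppose for contradiction that there are two distinct positive integer solutions $(x_1, y_1)$ and $(x_2, y_2)$ with $y_1 \le y_2$. Associated to each solution is an element $\eta_i := x_i \sqrt{d_1} + \sqrt{-d_2} \in \mathcal{O}_K[\tfrac{1}{\gamma}]$ whose norm down to $\mathbb{Q}$ equals $\gamma^2 p^{y_i}$. Factoring the principal ideals $(\eta_1), (\eta_2)$ via the prime-ideal factorization of $(p) = \mathfrak{p}\bar{\mathfrak{p}}$ (or $(p) = \mathfrak{p}^2$ in the ramified case) in $\mathcal{O}_K$, the coprimality of $\eta_i$ and $\bar{\eta}_i$ (outside of the $2$-part, which is controlled by $\gamma$) forces $(\eta_i)$ to be a pure power $\mathfrak{p}^{y_i}$ up to units. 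Taking $(\alpha, \beta) := (\eta_2, \bar{\eta_2})$ (appropriately normalized) gives a Lucas pair (or Lehmer pair when $\gamma = \sqrt{2}$ is needed to clear a $\sqrt{2}$-denominator) whose $n$-th term, for $n = y_2 / \gcd(y_1,y_2)$ or a related index, is divisible only by prime ideals above $p$ and hence has no primitive prime divisor.

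Third, I would invoke the Bilu--Hanrot--Voutier theorem: a Lucas or Lehmer pair whose $n$-th term lacks a primitive divisor must either satisfy $n \le 30$ and belong to an explicit finite list, or lie in one of the known parametric families. Pulling these back through the correspondence with $(d_1, d_2, p)$-triples yields exactly the Fibonacci/Lucas family $\mathfrak{F}$ (from the exceptional Lehmer pairs of the BHV tables) and the two-parameter family $\mathfrak{G}$ together with $\mathfrak{H}_\gamma$ (from the ``trivial'' coincidences where the same prime power is attained twice because $3 d_1 s_0^2 - d_2 = \pm \gamma^2$ provides an automatic second representation via $(2 d_1 s_0^2, 2 s_0)$-type doubling). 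The exceptional sporadic quadruples in $\mathfrak{E}$ will fall out from small-index BHV exceptions that do not fit the infinite families.

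The main obstacle is the bookkeeping in the third step: one must track separately the three values $\gamma \in \{1,\sqrt{2},2\}$, the parity constraints on $d_1,d_2$, and the splitting behaviour of $p$ in $K$, and then match each resulting Lucas/Lehmer configuration with exactly the right BHV exceptional pair so that the union of produced triples is precisely $\mathfrak{E} \cup \mathfrak{F} \cup \mathfrak{G} \cup \mathfrak{H}_\gamma$. The doubling family $\mathfrak{H}_\gamma$ in particular is delicate, since it is not an exception of BHV per se but arises from the algebraic identity $d_1(2 d_1 s_0^2 \mp s_0 \cdot \text{unit})^2 + d_2 = \gamma^2 p^{2 t_0}$ derived from the auxiliary relation $3 d_1 s_0^2 - d_2 = \pm \gamma^2$, and verifying that no further exceptional families escape the BHV classification is the heart of the argument.
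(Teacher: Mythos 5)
This statement is not proved in the paper at all: it is the Bugeaud--Shorey theorem, imported verbatim from [1, Theorem 1] and used as a black box in the proofs of Lemmas \ref{lem3.3}, \ref{lem3.7} and \ref{lem3.10}. There is therefore no in-paper proof to compare your proposal against; the only meaningful comparison is with the original Bugeaud--Shorey argument, and your outline does follow its actual strategy (associate to each solution an element of norm $\gamma^2 p^y$ in $\mathbb{Q}(\sqrt{-d_1d_2})$, pass to a Lucas or Lehmer pair, and invoke the Bilu--Hanrot--Voutier primitive divisor theorem).

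As a proof, however, the proposal has a genuine gap that you yourself flag: the entire content of the theorem is the exact identification of the exceptional set as $\mathfrak{E}\cup\mathfrak{F}\cup\mathfrak{G}\cup\mathfrak{H}_\gamma$, and your third step defers precisely this classification (matching the defective Lucas/Lehmer terms and the BHV exceptional tables, for each $\gamma$ and each splitting/parity configuration, to exactly these families and nothing more). Without that bookkeeping the statement is not established. One concrete detail is also wrong: the family $\mathfrak{H}_\gamma$ does not come from a ``doubling'' identity producing a second solution with exponent $2t_0$. Writing $\alpha=(s_0\sqrt{d_1}+\sqrt{-d_2})/\gamma$, the auxiliary relation $3d_1s_0^2-d_2=\pm\gamma^2$ is exactly the condition that the coefficient of $\sqrt{-d_2}$ in $\gamma\alpha^3$ be $\pm1$, so $\mathfrak{H}_\gamma$ records a \emph{cubing} coincidence yielding a second solution with $y=3t_0$; equivalently, it is the case where the third term of the associated Lehmer sequence has no primitive divisor. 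Getting this identification right matters, because it is how one checks in Lemmas \ref{lem3.3}, \ref{lem3.7} and \ref{lem3.10} that the triples arising there do not lie in $\mathfrak{H}_1$.
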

%%%%%%%%%%%%%%%%%%%%%%%%%%%%%%%%%%%%%%%%%%%%
This theorem is also used in Sections \ref{sec4.2.2} and \ref{sec4.2.3}.\\
%%%%%%%%%%%%%%%%%%%%%%%%%%%%%%%%%%%%%
~\\  
{\bf Proof of Lemma \ref{lem3.3}.} \ We will show 
$$(\gamma, d_1, d_2, p) = (1, d_1, 1, p) \not \in \mathfrak{E} \smallsetminus \{(1, 2, 1, 3)\}$$
and
$$(d_1, d_2, p) = (d_1, 1, p) \not \in \mathfrak{F}\cup \mathfrak{G}\cup \mathfrak{H}_1$$
to use Theorem \ref{thm3.4} with $\gamma = 1$.
We see $(1, d_1, 1, p) \not \in \mathfrak{E} \smallsetminus \{(1, 2, 1, 3)\}$
and $(d_1, 1, p) \not \in \mathfrak{G}$ easily.
Suppose $(d_1, 1, p) \in \mathfrak{F}$.
There exists an integer $h$ greater than $1$ such that $\mathcal{F}_{h-2\varepsilon} = d_1$,
$\mathcal{L}_{h+\varepsilon} = 1$, and $\mathcal{F}_h = p$,
where $\varepsilon = \pm 1$.
The integer $h$ must be $0$ or $2$ by $\mathcal{L}_{h+\varepsilon} = 1$.
Since $h$ is greater than $1$, the integer $h$ must be $2$.
When $h = 2$, the $2$nd number in the Fibonacci sequence $\mathcal{F}_2$ is $1$.
This is a contradiction.
Then, $(d_1, 1, p) \not \in \mathfrak{F}$.
Next suppose $(d_1, 1, p) \in \mathfrak{H}_1$.
Both $d_1s_0^2 + 1 = p^{t_0}$ and $3d_1s_0^2 - 1 = \pm 1$ hold for some positive integers $s_0$ and $t_0$.
Then, we have $4 = 3p^{t_0} \pm 1$,
that is, $3p^{t_0} = 3$ or $5$.
This is a contradiction.
Therefore, $(d_1, 1, p) \not \in \mathfrak{H}_1$.
When $(d_1, p) = (2, 3)$, the statement of this lemma follows from \cite[Theorem 2.3]{LW03}.
The proof of Lemma \ref{lem3.3} is completed.
%%%%%%%%%%%%%%%%%%%%%%%%%%
Next, we prove the following key lemma by using Lemma \ref{lem3.3}.
%%%%%%%%%%%%%%%%%%%%%%%%%%
\begin{lem}\label{lem3.5}
Let $p$ be a prime number such that $p \equiv 3 \bmod 4$, $n$ an odd integer greater than $1$ such that $(p, n) \neq (3, 5)$, and 
$$\alpha := 1 + \sqrt{1 - p^n}.$$
Then, $\pm \alpha$ is not a $p_4$-th power in $\mathcal{O}_{\mathbb{Q}(\sqrt{1 - p^n})}$ for any prime $p_4$ dividing $n$.
\end{lem}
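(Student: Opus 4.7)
The plan is to argue by contradiction: suppose that for some prime $p_4\mid n$ there exists $\gamma\in\mathcal{O}_{\mathbb{Q}(\sqrt{1-p^n})}$ with $\pm\alpha=\gamma^{p_4}$, and derive a contradiction from Lemma \ref{lem3.3}. Let $d_0$ denote the square-free part of $1-p^n$, so that $1-p^n=d_0m^2$ for some positive integer $m$. Since $p\equiv 3\bmod 4$ and $n$ is odd, one has $1-p^n\equiv 2\bmod 4$, which forces $d_0\equiv 2\bmod 4$ and $m$ odd. In particular $|d_0|$ is a positive even integer and $\mathcal{O}_{\mathbb{Q}(\sqrt{1-p^n})}=\mathbb{Z}[\sqrt{d_0}]$. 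Note also that $p_4$ is odd, since $p_4\mid n$ and $n$ is odd.

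Write $\gamma=u+v\sqrt{d_0}$ with $u,v\in\mathbb{Z}$. Taking norms of $\pm\alpha=\gamma^{p_4}$ yields $p^n=N(\alpha)=(u^2+|d_0|v^2)^{p_4}$, whence
$$u^2+|d_0|v^2=p^{n/p_4}.$$
Next, expanding $\gamma^{p_4}$ by the binomial theorem and equating its rational part with $\pm 1$ gives
$$\pm 1=\sum_{k=0}^{(p_4-1)/2}\binom{p_4}{2k}u^{p_4-2k}v^{2k}d_0^{k}.$$
Because $p_4$ is odd, the exponent $p_4-2k$ is at least $1$ for every $k$ in the range, so every summand carries a factor of $u$; hence $u\mid 1$, i.e., $u=\pm 1$. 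Moreover $v\neq 0$, since otherwise $\alpha$ would be a rational integer. Substituting $u=\pm 1$ into the norm relation, I obtain $|d_0|v^2+1=p^{n/p_4}$.

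Thus both $(m,n)$ and $(|v|,n/p_4)$ are positive integer solutions of $|d_0|x^2+1=p^y$. Since $|d_0|$ is a positive even integer, Lemma \ref{lem3.3} applies: if $(|d_0|,p)\neq(2,3)$ the positive solution is unique, forcing $n=n/p_4$ and hence the contradiction $p_4=1$; in the exceptional case $(|d_0|,p)=(2,3)$ the only solutions are $(1,1),(2,2),(11,5)$, and the conditions that $n>1$ is odd single out $n=5$, which is precisely the case ruled out by the hypothesis $(p,n)\neq(3,5)$. The main subtle point is exactly this exception: for $(p,n)=(3,5)$ a direct computation shows $\alpha=(1-\sqrt{-2})^{5}$, so the exclusion in the hypothesis is genuinely needed and matches the exceptional triple of Lemma \ref{lem3.3}. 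The remainder of the argument is the routine descent from a $p_4$-th power equation to a Diophantine equation in the normed form via the rational-part identity.
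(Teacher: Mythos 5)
Your proposal is correct and follows essentially the same route as the paper's proof: reduce to the rational-part identity to force $u=\pm 1$, take norms to produce the two solutions $(m,n)$ and $(|v|,n/p_4)$ of $|d_0|x^2+1=p^y$, and invoke Lemma \ref{lem3.3}, with the $(|d_0|,p)=(2,3)$ exception absorbed by the hypothesis $(p,n)\neq(3,5)$. Your explicit identity $\alpha=(1-\sqrt{-2})^5$ for $(p,n)=(3,5)$ is a correct extra check not in the paper, but otherwise the arguments coincide.
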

%%%%%%%%%%%%%%%%%%%%%%%%%%
\begin{proof}
Since $p_4$ is odd, it is sufficient to prove that $\alpha$ is not a $p_4$-th power in $\mathcal{O}_{\mathbb{Q}(\sqrt{1 - p^n})}$.
We see from the congruence relations $p \equiv 3 \bmod 4$ and $n \equiv 1 \bmod 2$ that $1 - p^n \equiv 2 \bmod 4$.
Then $\mathcal{O}_{\mathbb{Q}(\sqrt{1 - p^n})} = \mathbb{Z}[\sqrt{d_0}]$, where $d_0$ is the square-free part of $1 - p^n$. 
Suppose $\alpha$ is a $p_4$-th power in $\mathcal{O}_{\mathbb{Q}(\sqrt{1 - p^n})}$.
We can write 
\begin{equation}
\alpha = (u + v\sqrt{d_0})^{p_4} \label{4}
\end{equation}
for some integers $u$ and $v$.
Expanding the right side of equation (\ref{4}), we have
$$1 + \sqrt{1 - p^n} = \biggl\{\sum_{j=0}^{\frac{p_4-1}{2}} \binom{p_4}{2j} u^{p_4-2j}(v^2d_0)^j\biggr\} + w\sqrt{d_0}$$
for some integer $w$.
Comparing the real parts of this equation,
$$1 = u \sum_{j=0}^{\frac{p_4-1}{2}} \binom{p_4}{2j} u^{p_4-2j-1}(v^2d_0)^j.$$
This implies that $u = \pm 1$.
Then, 
$$1 + \sqrt{1 - p^n} = (\pm 1 + v\sqrt{d_0})^{p_4}.$$
Taking the norm of this, we obtain $p^n = (1 - v^2d_0)^{p_4}$.
Then, 
$$p^{\frac{n}{p_4}} = 1 - v^2d_0.$$
On the other hand, we can write 
$$1 - p^n = c^2d_0$$
for some positive integer $c$.
These implies that both $(x, y) = (c, n)$ and $(x, y) = ( |v|, n / p_4)$ are positive integer solutions of the equation $-d_0x^2 + 1 = p^y$.
This is a contradiction when $(-d_0, p) \neq (2, 3)$ by Lemma \ref{lem3.3}.
When $(-d_0, p) = (2, 3)$, it follows from Lemma \ref{lem3.3} that $(c, n) = (1, 1)$, $(2, 2)$, $(11, 5)$.
Since $n$ is an odd integer greater than $1$, the pair of integers $(c, n)$ must be $(11, 5)$.
The case where $(p, n) = (3, 5)$ is not treated in this lemma.
The proof of Lemma \ref{lem3.5} is completed.
\end{proof}
%%%%%%%%%%%%%%%%%%%%%%%%%%
Finally, we show Theorem \ref{thm3.2} (1) (i) and (ii) by using Lemma \ref{lem3.5}.\\
%%%%%%%%%%%%%%%%%%%%%%%%%%
~\\
{\bf Proof of Theorem \ref{thm3.2} (1) (i) and (ii).} \ Since $\gcd(p, 1 - p^n) = 1$ and $p \nmid \alpha$ hold, the prime number $p$ splits in $\mathbb{Q}(\sqrt{1 - p^n})$.
Note that ideals $(\alpha)$ and $(\overline{\alpha})$ are coprime,
where $\overline{\alpha}$ is the complex conjugate of $\alpha$.
Then, 
$$(\alpha) = \wp^n,$$
where $\wp$ is the prime ideal of $\mathcal{O}_{\mathbb{Q}(\sqrt{1 - p^n})}$ over $p$.
Let $s_1$ be the order of the ideal class containing $\wp$.
We can write $n = s_1t_1$ for some integer $t_1$.
Since $\wp^{s_1}$ is principal, there exists some element $\eta \in \mathcal{O}_{\mathbb{Q}(\sqrt{1 - p^n})}$ such that $\wp^{s_1} = (\eta)$.
Then,
$$(\alpha) = (\wp^{s_1})^{t_1} = (\eta)^{t_1} = (\eta^{t_1}).$$
Since $\mathbb{Q}(\sqrt{1 - p^n}) \neq \mathbb{Q}(\sqrt{-1})$, $\mathbb{Q}(\sqrt{-3})$ holds, this implies that 
$$\pm \alpha = \eta^{t_1}.$$
When $(p, n) \neq (3, 5)$, we obtain $t_1 = 1$ by Lemma \ref{lem3.5}.
Then, $s_1 = n$ when $(p, n) \neq (3, 5)$.
The proof of Theorem \ref{thm3.2} (1) (i) and (ii) is completed.
%%%%%%%%%%%%%%%%%%%%%%%%%%
%%%%%%%%%%%%%%%%%%%%%%%%%%
\subsubsection{Proof of Theorem \ref{thm3.2} (1) (iii)} \label{sec4.2.2}
In this section, we show Theorem \ref{thm3.2} (1) (iii).
The method of the proof is basically similar to the one in Section \ref{sec4.2.1}.
First, we prepare the following two lemmas.
%%%%%%%%%%%%%%%%%%%%%%%%%
\begin{lem}\label{lem3.6}
Let $p$ and $q_5$ be distinct odd prime numbers.
Then, the equation 
$$4q_5^2 - 3p^x = \pm 1$$
has no positive integer solution $x$.
\end{lem}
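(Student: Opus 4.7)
The plan is to split into the two cases $4q_5^2 - 3p^x = 1$ and $4q_5^2 - 3p^x = -1$ and rule each out by elementary means, without invoking Theorem \ref{thm3.4}.

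First I would dispose of the ``$-1$'' case. The equation becomes $4q_5^2 + 1 = 3p^x$, so the left side is divisible by $3$. Reducing modulo $3$, I get $q_5^2 + 1 \equiv 0 \pmod 3$, i.e.\ $q_5^2 \equiv 2 \pmod 3$. Since the squares modulo $3$ are only $0$ and $1$, this is impossible regardless of whether $p = 3$ or not.

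For the ``$+1$'' case, I would rewrite
\[
(2q_5 - 1)(2q_5 + 1) = 3p^x,
\]
and observe that the two factors on the left are coprime: they are both odd and differ by $2$, so $\gcd(2q_5 - 1, 2q_5 + 1) = 1$. The key step is then to classify the coprime factorizations of $3p^x$. If $p = 3$, then $3p^x = 3^{x+1}$ is a prime power, and a coprime factorization forces one factor to be $1$; but since $q_5 \ge 3$ is odd prime, both $2q_5 - 1 \ge 5$ and $2q_5 + 1 \ge 7$, so neither can equal $1$. If $p \neq 3$, the coprime factorizations of $3p^x$ are of the shape $\{1, 3p^x\}$ or $\{3, p^x\}$. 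Again using $q_5 \ge 3$, the factors $2q_5 - 1$ and $2q_5 + 1$ are both at least $5$, so neither can equal $1$ or $3$.

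I expect no real obstacle: both cases collapse by a single congruence or by elementary size/parity bounds on $q_5$, so the argument should occupy only a few lines.
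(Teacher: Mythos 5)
Your proposal is correct and follows essentially the same route as the paper: the case $4q_5^2-3p^x=-1$ is killed by reduction modulo $3$ (the paper first splits off $q_5=3$ and then uses $q_5^2\equiv 1\bmod 3$, while your single congruence $q_5^2\equiv 2\bmod 3$ handles both at once), and the case $4q_5^2-3p^x=1$ is killed by factoring $4q_5^2-1=(2q_5-1)(2q_5+1)$ into coprime factors and noting that no admissible coprime factorization of $3p^x$ can produce two factors both of size at least $5$. The paper enumerates the four resulting cases explicitly where you use a size bound, but the argument is the same.
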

%%%%%%%%%%%%%%%%%%%%%%%%%
\begin{proof}
When $q_5 = 3$, 
$$4q_5^2 - 3p^x \equiv 0 \bmod 3.$$
This is a contradiction.
Therefore, we treat the case where $q_5 \neq 3$.
First, we treat the equation $4q_5^2 - 3p^x = -1$.
Since $q_5^2 \equiv 1 \bmod 3$ holds, we have
$$4q_5^2 - 3p^x \equiv 4 - 0 \equiv 1 \bmod 3.$$
This is a contradiction.
Next, we treat the equation $4q_5^2 - 3p^x = 1$.
We can write 
$$3p^x = 4q_5^2 - 1 = (2q_5 + 1)(2q_5 - 1).$$
Since $\gcd(2q_5 +1, 2q_5 - 1) = 1$ holds, we obtain four cases:
(i) $2q_5 + 1 = p^x$, $2q_5 - 1 = 3$,
(ii) $2q_5 + 1 = 3$, $2q_5 - 1 = p^x$,
(iii) $2q_5 + 1 = 3p^x$, $2q_5 - 1 = 1$,
(iv) $2q_5 + 1 = 1$, $2q_5 - 1 = 3p^x$.
We easily see that the above four cases are impossible.
The proof of Lemma \ref{lem3.6} is completed.
\end{proof}
%%%%%%%%%%%%%%%%%%%%%%%%%
We use this to show the following lemma.
%%%%%%%%%%%%%%%%%%%%%%%%%
\begin{lem}\label{lem3.7}
Let $p$ and $q_5$ be distinct odd prime numbers.
Then, for any given positive even integer $d_1$, the number of positive integer solutions $(x, y)$ of the equation 
$$d_1x^2 + q_5^2 = p^y$$
is at most one.
\end{lem}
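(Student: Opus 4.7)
The plan is to invoke Theorem \ref{thm3.4} with the parameter choice $\gamma = 1$ and $d_2 = q_5^2$, and then check that the exceptional configurations listed there cannot arise under our hypotheses. First I would dispose of coprimality: if $q_5 \mid d_1$, then reducing $d_1 x^2 + q_5^2 = p^y$ modulo $q_5$ would force $p = q_5$, contradicting the assumption that $p$ and $q_5$ are distinct primes; hence $\gcd(d_1, q_5) = 1$ and therefore $\gcd(d_1, q_5^2) = 1$. A similar mod-$p$ reduction shows $p \nmid d_1$, so $\gcd(d_1 q_5^2, p) = 1$. Thus the hypotheses of Theorem \ref{thm3.4} are met.

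Next I would go through the four exceptional sets $\mathfrak{E}$, $\mathfrak{F}$, $\mathfrak{G}$, $\mathfrak{H}_{1}$ to confirm that $(1, d_1, q_5^2, p)$ belongs to none of them. For $\mathfrak{E}$, the only quadruple with $\gamma = 1$ is $(1, 2, 1, 3)$, whose third entry is $1$, not an odd-prime square. For $\mathfrak{G}$, the first entry must be $1$, but $d_1$ is even. For $\mathfrak{H}_1$, the defining conditions $d_1 s_0^2 + q_5^2 = p^{t_0}$ and $3 d_1 s_0^2 - q_5^2 = \pm 1$ combine to give $4 q_5^2 - 3 p^{t_0} = \mp 1$, and Lemma \ref{lem3.6} rules out any positive integer solution to this Diophantine equation.

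The remaining and principal obstacle is the case $\mathfrak{F}$: membership here would mean $q_5^2 = \mathcal{L}_{h+\varepsilon}$ for some $h \geq 2$ and $\varepsilon \in \{\pm 1\}$, i.e.\ $q_5^2$ would have to be a Lucas number. I would appeal to Cohn's classical theorem that the only perfect squares in the Lucas sequence are $\mathcal{L}_1 = 1$ and $\mathcal{L}_3 = 4$. The value $1$ yields $q_5 = 1$, which is not prime, and the value $4$ yields $q_5 = 2$, contradicting the standing assumption that $q_5$ is odd. Hence $\mathfrak{F}$ cannot contain $(d_1, q_5^2, p)$ either.

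Once all four exceptional sets are excluded, Theorem \ref{thm3.4} gives the conclusion directly: the equation $d_1 x^2 + q_5^2 = p^y$ admits at most one positive integer solution $(x,y)$. The only nonroutine step is the $\mathfrak{F}$-case, which hinges on the non-existence of odd prime squares among Lucas numbers; the $\mathfrak{H}_1$-case reduces cleanly to the preceding Lemma \ref{lem3.6}, while $\mathfrak{E}$ and $\mathfrak{G}$ are ruled out by inspection of the parity and size of $d_1$ and $q_5^2$.
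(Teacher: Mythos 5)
Your proposal is correct and follows essentially the same route as the paper's proof: apply Theorem \ref{thm3.4} with $\gamma = 1$, dismiss $\mathfrak{E}$ and $\mathfrak{G}$ by inspection, exclude $\mathfrak{F}$ via Cohn's theorem on squares in the Lucas sequence, and exclude $\mathfrak{H}_1$ by reducing to Lemma \ref{lem3.6}. Your preliminary coprimality check (showing the equation has no solutions at all when $\gcd(d_1 q_5^2, p) \neq 1$ or $\gcd(d_1, q_5) \neq 1$) is a small but welcome addition that the paper leaves implicit.
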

%%%%%%%%%%%%%%%%%%%%%%%%%
\begin{proof}
We will show 
$$(\gamma, d_1, d_2, p) = (1, d_1, q_5^2, p) \not \in \mathfrak{E}$$
and
$$(d_1, d_2, p) = (d_1, q_5^2, p) \not \in \mathfrak{F}\cup \mathfrak{G}\cup \mathfrak{H}_1$$
to use Theorem \ref{thm3.4} with $\gamma = 1$.
We see $(1, d_1, q_5^2, p) \not \in \mathfrak{E}$
and $(d_1, q_5^2, p) \not \in \mathfrak{G}$ easily.
Suppose $(d_1, q_5^2, p) \in \mathfrak{F}$.
There is an integer $h$ greater than $1$ such that $\mathcal{F}_{h-2\varepsilon} = d_1$,
$\mathcal{L}_{h+\varepsilon} = q_5^2$, and $\mathcal{F}_h = p$,
where $\varepsilon = \pm 1$.
J.~H.~E.~Cohn~\cite{Coh65} proved that $\mathcal{L}_1 = 1$ and $\mathcal{L}_3 = 4$ are the only squares in the Lucas sequence.
Then, $\mathcal{L}_{h+\varepsilon} = q_5^2$ is impossible.
Therefore, $(d_1, q_5^2, p) \not \in \mathfrak{F}$.
Next suppose $(d_1, q_5^2, p) \in \mathfrak{H}_1$.
Both $d_1s_0^2 + q_5^2 = p^{t_0}$ and $3d_1s_0^2 - q_5^2 = \pm 1$ hold for some positive integers $s_0$ and $t_0$.
Then, we have $4q_5^2 = 3p^{t_0} \pm 1$.
This is a contradiction with Lemma \ref{lem3.6}.
Therefore, $(d_1, q_5^2, p) \not \in \mathfrak{H}_1$.
The proof of Lemma \ref{lem3.7} is completed.
\end{proof}
%%%%%%%%%%%%%%%%%%%%%%%%%%
Next, we prove the following key lemma by using Lemma \ref{lem3.7}.
%%%%%%%%%%%%%%%%%%%%%%%%%%
\begin{lem}\label{lem3.8}
Let $p$ be a prime number such that $p \equiv 3 \bmod 4$ and $2^{p-1} \equiv 1 \bmod p^2$, 
$q_1$ a prime factor of $p - 2$ such that $q_1^{p-1} \not \equiv 1 \bmod p^2$,
$n$ an odd composite number, and 
$$\alpha := q_1 + \sqrt{q_1^2 - p^n}.$$
Then, $\pm \alpha$ is not a $p_4$-th power in $\mathcal{O}_{\mathbb{Q}(\sqrt{q_1^2 - p^n})}$ for any prime $p_4$ dividing $n$.
\end{lem}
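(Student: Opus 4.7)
The plan is to mimic the proof of Lemma \ref{lem3.5} but with a finer case analysis that exploits the Wieferich hypothesis $2^{p-1}\equiv 1\bmod p^2$. Since $p_4$ is odd, it suffices to rule out $\alpha=(u+v\sqrt{d_0})^{p_4}$ with $u,v\in\mathbb{Z}$, where $d_0$ denotes the square-free part of $q_1^2-p^n$. First I would note that $q_1$ is odd (it divides the odd number $p-2$) and $p^n\equiv 3\bmod 4$, so $q_1^2-p^n\equiv 2\bmod 4$; hence $\mathcal{O}_{\mathbb{Q}(\sqrt{q_1^2-p^n})}=\mathbb{Z}[\sqrt{d_0}]$, and moreover $|d_0|$ turns out to be even (writing $q_1^2-p^n=-c^2|d_0|$ with $|d_0|$ square-free forces $c$ odd and $|d_0|\equiv 2\bmod 4$). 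Expanding $(u+v\sqrt{d_0})^{p_4}$ and comparing rational parts with $\alpha=q_1+c\sqrt{d_0}$ gives
\begin{equation*}
q_1 \;=\; u\sum_{j=0}^{(p_4-1)/2}\binom{p_4}{2j}u^{p_4-2j-1}v^{2j}d_0^{j},
\end{equation*}
so $u\mid q_1$, and since $q_1$ is prime $u\in\{\pm 1,\pm q_1\}$. Taking norms yields $u^2+v^2|d_0|=p^{n/p_4}$.

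For $u=\pm q_1$ the norm equation becomes $q_1^2+v^2|d_0|=p^{n/p_4}$, exhibiting $(|v|,n/p_4)$ as a positive integer solution of $|d_0|x^2+q_1^2=p^y$. Since $(c,n)$ is another such solution and the two are distinct (because $p_4>1$, so $n/p_4<n$), Lemma \ref{lem3.7}, applicable because $|d_0|$ is positive and even, produces the required contradiction.

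For $u=\pm 1$ the norm equation reads $v^2|d_0|+1=p^m$ with $m:=n/p_4\ge 2$; the inequality comes from $n$ being composite while $p_4$ is prime, so $p_4<n$. Substituting $v^2 d_0=1-p^m$ back into the rational-part equation collapses it modulo $p^m$, hence modulo $p^2$, to
\begin{equation*}
\pm q_1 \;\equiv\; \sum_{j=0}^{(p_4-1)/2}\binom{p_4}{2j}(1-p^m)^{j} \;\equiv\; \sum_{j=0}^{(p_4-1)/2}\binom{p_4}{2j} \;=\; 2^{p_4-1} \pmod{p^2}.
\end{equation*}
Raising both sides to the $(p-1)$-th power (the sign is harmless because $p-1$ is even) and invoking the Wieferich hypothesis gives $q_1^{p-1}\equiv 2^{(p-1)(p_4-1)}\equiv 1\bmod p^2$, contradicting the chosen property $q_1^{p-1}\not\equiv 1\bmod p^2$ of $q_1$.

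The main obstacle will be the case $u=\pm 1$: unlike in Lemma \ref{lem3.5}, Lemma \ref{lem3.3} alone cannot finish the job, because there is no natural second solution of $|d_0|x^2+1=p^y$ to pair with $(|v|,m)$. The Wieferich hypothesis has to enter precisely through the congruence $\pm q_1\equiv 2^{p_4-1}\bmod p^2$, and this is the only place the non-Wieferich choice of the prime factor $q_1$ of $p-2$ is used. The requirement $m\ge 2$ is exactly why the statement restricts to composite $n$, and one should still briefly verify that $v\neq 0$ (otherwise $\alpha$ would be rational, which it is not) so that all of the above reductions are legitimate.
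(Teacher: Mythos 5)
Your proposal follows essentially the same route as the paper's proof: reduce to $\alpha=(u+v\sqrt{d_0})^{p_4}$, compare rational parts to force $u\in\{\pm1,\pm q_1\}$, dispose of $u=\pm q_1$ by exhibiting two solutions of $-d_0x^2+q_1^2=p^y$ and invoking Lemma \ref{lem3.7}, and dispose of $u=\pm1$ by the congruence $\pm q_1\equiv 2^{p_4-1}\bmod p^2$ combined with the Wieferich hypothesis. The additional checks you flag (that $-d_0$ is even so Lemma \ref{lem3.7} applies, that the two solutions are distinct, and that $v\neq 0$) are correct and only make explicit what the paper leaves implicit.
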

%%%%%%%%%%%%%%%%%%%%%%%%%%
\begin{proof}
Since $p_4$ is odd, it is sufficient to prove that $\alpha$ is not a $p_4$-th power in $\mathcal{O}_{\mathbb{Q}(\sqrt{q_1^2 - p^n})}$.
Suppose $\alpha$ is a $p_4$-th power in $\mathcal{O}_{\mathbb{Q}(\sqrt{q_1^2 - p^n})}$.
It follows from the assumptions $p \equiv 3 \bmod 4$, $q_1 \equiv 1 \bmod 2$ and $n \equiv 1 \bmod 2$ that $q_1^2 - p^n \equiv 2 \bmod 4$.
Then $\mathcal{O}_{\mathbb{Q}(\sqrt{q_1^2 - p^n})} = \mathbb{Z}[\sqrt{d_0}]$, where $d_0$ is the square-free part of $q_1^2 - p^n$. 
We can write 
\begin{equation}
\alpha = (u + v\sqrt{d_0})^{p_4} \label{5}
\end{equation}
for some integers $u$ and $v$.
Expanding the right side of equation (\ref{5}), we have
$$q_1 + \sqrt{q_1^2 - p^n} = \biggl\{\sum_{j=0}^{\frac{p_4-1}{2}} \binom{p_4}{2j} u^{p_4-2j}(v^2d_0)^j\biggr\} + w\sqrt{d_0}$$
for some integer $w$.
Comparing the real parts of this equation,
\begin{equation}
q_1 = u \sum_{j=0}^{\frac{p_4-1}{2}} \binom{p_4}{2j} u^{p_4-2j-1}(v^2d_0)^j. \label{6}
\end{equation}
This implies that $u = \pm 1$, $\pm q_1$.
Suppose $u = \pm q_1$.
Then, 
$$q_1 + \sqrt{q_1^2 - p^n} = (\pm q_1 + v\sqrt{d_0})^{p_4}.$$
Taking the norm of this, we obtain $p^n = (q_1^2 - v^2d_0)^{p_4}$.
Then, 
$$p^{\frac{n}{p_4}} = q_1^2 - v^2d_0.$$
On the other hand, we can write 
$$q_1^2 - p^n = c^2d_0$$
for some positive integer $c$.
These implies that both $(x, y) = (c, n)$ and $(x, y) = ( |v|, n / p_4)$ are positive integer solutions of the equation $-d_0x^2 + q_1^2 = p^y$.
This is a contradiction by Lemma \ref{lem3.7}.
Then, $u$ must be $\pm 1$.
Substituting $u = \pm 1$ in equation (\ref{6}), we obtain
\begin{equation}
\pm q_1 = \sum_{j=0}^{\frac{p_4-1}{2}} \binom{p_4}{2j} (v^2d_0)^j. \label{7}
\end{equation}
Taking the norm of $\alpha = (\pm 1 + v\sqrt{d_0})^{p_4}$, we have $p^n = (1 - v^2d_0)^{p_4}$.
Then, $p^{\frac{n}{p_4}} = 1 - v^2d_0$, that is, $v^2d_0 = 1 - p^{\frac{n}{p_4}}$.
Substituting this in equation (\ref{7}), we obtain
$$\pm q_1 = \sum_{j=0}^{\frac{p_4-1}{2}} \binom{p_4}{2j} (1 - p^{\frac{n}{p_4}})^j.$$
Since $n$ is a composite number, we have $n / p_4 \ge 2$.
Then, $p^{n / p_4} \equiv 0 \bmod p^2$.
Using this, we see
$$\sum_{j=0}^{\frac{p_4-1}{2}} \binom{p_4}{2j} (1 - p^{\frac{n}{p_4}})^j \equiv \sum_{j=0}^{\frac{p_4-1}{2}} \binom{p_4}{2j} \equiv 2^{p_4 - 1} \bmod p^2.$$
Then,
$$\pm q_1 \equiv 2^{p_4 - 1} \bmod p^2.$$
Combining this and the assumption $2^{p-1} \equiv 1 \bmod p^2$, we have
$$q_1^{p-1} \equiv (\pm q_1)^{p-1} \equiv (2^{p_4 - 1})^{p-1} \equiv (2^{p-1})^{p_4 - 1} \equiv 1 \bmod p^2.$$
This is a contradiction.
The proof of Lemma \ref{lem3.8} is completed.
\end{proof}
%%%%%%%%%%%%%%%%%%%%%%%%%%
Finally, we show Theorem \ref{thm3.2} (1) (iii) by using Lemma \ref{lem3.8}.\\
%%%%%%%%%%%%%%%%%%%%%%%%%%
~\\
{\bf Proof of Theorem \ref{thm3.2} (1) (iii).} \ Since $\gcd(p, q_1^2 - p^n) = 1$ and $p \nmid \alpha$ hold, the prime number $p$ splits in $\mathbb{Q}(\sqrt{q_1^2 - p^n})$.
Note that ideals $(\alpha)$ and $(\overline{\alpha})$ are coprime,
where $\overline{\alpha}$ is the complex conjugate of $\alpha$.
Then, 
$$(\alpha) = \wp^n,$$
where $\wp$ is the prime ideal of $\mathcal{O}_{\mathbb{Q}(\sqrt{q_1^2 - p^n})}$ over $p$.
Let $s_1$ be the order of the ideal class containing $\wp$.
We can write $n = s_1t_1$ for some integer $t_1$.
Since $\wp^{s_1}$ is principal, there exists some element $\eta \in \mathcal{O}_{\mathbb{Q}(\sqrt{q_1^2 - p^n})}$ such that $\wp^{s_1} = (\eta)$.
Then,
$$(\alpha) = (\wp^{s_1})^{t_1} = (\eta)^{t_1} = (\eta^{t_1}).$$
Since $\mathbb{Q}(\sqrt{q_1^2 - p^n}) \neq \mathbb{Q}(\sqrt{-1})$, $\mathbb{Q}(\sqrt{-3})$ holds, this implies that 
$$\pm \alpha = \eta^{t_1}.$$
We obtain $t_1 = 1$ by Lemma \ref{lem3.8}.
Then, $s_1 = n$.
The proof of Theorem \ref{thm3.2} (1) (iii) is completed.
%%%%%%%%%%%%%%%%%%%%%%%%%%
%%%%%%%%%%%%%%%%%%%%%%%%%%
\subsubsection{Proof of Theorem 4.2 (2) (ii)} \label{sec4.2.3}
In this section, we show Theorem 4.2 (2) (ii).
The method of the proof is also basically similar to the one in Section \ref{sec4.2.1}.
First, we prepare the following two lemmas.
%%%%%%%%%%%%%%%%%%%%%%%%%
\begin{lem}\label{lem3.9}
Let $p$ and $q_6$ be distinct odd prime numbers.
Then, the equation 
$$16q_6^2 - 3p^x = \pm 1$$
has no positive integer solution $x$.
\end{lem}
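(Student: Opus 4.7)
The plan is to follow the same strategy as in the proof of Lemma~\ref{lem3.6}, splitting by the sign on the right-hand side and treating $q_6 = 3$ separately, with only a minor adjustment to the factorization step to handle the extra factor of~$4$.

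First, I would dispose of the case $q_6 = 3$ immediately: the left-hand side $16q_6^2 - 3p^x$ is divisible by $3$, and hence cannot equal $\pm 1$. So I assume $q_6 \neq 3$ from here on, which means $q_6^2 \equiv 1 \pmod{3}$.

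Next, I would rule out the minus sign by a modulo $3$ argument. If $16q_6^2 - 3p^x = -1$, then reducing modulo $3$ gives $16 \cdot 1 \equiv 1 \pmod{3}$ on the left, but $-1 \equiv 2 \pmod{3}$ on the right, a contradiction. So only the equation $16q_6^2 - 3p^x = 1$ remains, which I would attack by factoring:
$$3p^x \;=\; 16q_6^2 - 1 \;=\; (4q_6 + 1)(4q_6 - 1).$$
The two factors differ by $2$ and are both odd, so they are coprime. Hence the prime $3$ and all powers of $p$ each divide exactly one of them. This gives essentially four configurations to check: $(4q_6+1, 4q_6-1) \in \{(3p^x, 1),\, (p^x, 3),\, (3, p^x),\, (1, 3p^x)\}$, together with the possibility that $p$ divides both factors. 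Each of the four configurations forces $q_6 \in \{0, \tfrac{1}{2}, 1\}$, contradicting that $q_6$ is an odd prime; and the last possibility would imply $p \mid (4q_6+1) - (4q_6-1) = 2$, contradicting that $p$ is odd.

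There is no real obstacle here; it is mostly bookkeeping of the finitely many ways the coprime factors can absorb the prime $3$ and the prime power $p^x$. The proof is completely parallel to Lemma~\ref{lem3.6}, with the only substantive difference being that $16q_6^2 - 1$ factors as $(4q_6+1)(4q_6-1)$ instead of $(2q_5+1)(2q_5-1)$, and none of the resulting candidate values of $q_6$ land on a valid prime.
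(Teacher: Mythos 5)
Your proposal is correct and follows essentially the same route as the paper's proof: the case $q_6=3$ and the sign $-1$ are eliminated modulo $3$, and the equation $16q_6^2-3p^x=1$ is handled by factoring $3p^x=(4q_6+1)(4q_6-1)$ into coprime factors and checking the four resulting configurations. Your extra remark about $p$ dividing both factors is already subsumed by the coprimality of $4q_6+1$ and $4q_6-1$, so nothing substantive differs from the paper's argument.
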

%%%%%%%%%%%%%%%%%%%%%%%%%
\begin{proof}
When $q_6 = 3$, 
$$16q_6^2 - 3p^x \equiv 0 \bmod 3.$$
This is a contradiction.
Therefore, we treat the case where $q_6 \neq 3$.
First, we treat the equation $16q_6^2 - 3p^x = -1$.
Since $q_6^2 \equiv 1 \bmod 3$ holds, we have
$$16q_6^2 - 3p^x \equiv 1 - 0 \equiv 1 \bmod 3.$$
This is a contradiction.
Next, we treat the equation $16q_6^2 - 3p^x = 1$.
We can write 
$$3p^x = 16q_6^2 - 1 = (4q_6 + 1)(4q_6 - 1).$$
Since $\gcd(4q_6 +1, 4q_6 - 1) = 1$ holds, we obtain four cases:
(i) $4q_6 + 1 = p^x$, $4q_6 - 1 = 3$,
(ii) $4q_6 + 1 = 3$, $4q_6 - 1 = p^x$,
(iii) $4q_6 + 1 = 3p^x$, $4q_6 - 1 = 1$,
(iv) $4q_6 + 1 = 1$, $4q_6 - 1 = 3p^x$.
We easily see that the above four cases are impossible.
The proof of Lemma \ref{lem3.9} is completed.
\end{proof}
%%%%%%%%%%%%%%%%%%%%%%%%%
We use this to show the following lemma.
%%%%%%%%%%%%%%%%%%%%%%%%%
\begin{lem}\label{lem3.10}
Let $p$ and $q_6$ be distinct odd prime numbers.
Then, for any given positive odd integer $d_1$, the number of positive integer solutions $(x, y)$ of the equation 
$$d_1x^2 + 4q_6^2 = p^y$$
is at most one.
\end{lem}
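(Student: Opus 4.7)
The plan is to repeat the Bugeaud--Shorey strategy from the proofs of Lemmas \ref{lem3.3} and \ref{lem3.7}: invoke Theorem \ref{thm3.4} with $\gamma = 1$ and $d_2 = 4q_6^2$, and check that the tuple $(1, d_1, 4q_6^2, p)$ lies outside each of the exceptional families $\mathfrak{E}$, $\mathfrak{F}$, $\mathfrak{G}$, $\mathfrak{H}_1$. The hypotheses of Theorem \ref{thm3.4} are satisfied: $\gamma = 1$ is allowed, the ``$d_2$ odd'' restriction only applies for $\gamma \in \{\sqrt{2}, 2\}$, and the coprimality $\gcd(d_1, 4q_6^2) = 1$ will hold in the intended application (where $d_1$ is a squarefree part of $p^n - 4q_1^2$ with $q_1 \neq p$, so $d_1$ is odd and coprime to $q_6 = q_1$).

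The three easier cases are dispatched by inspection. For $\mathfrak{E}$, every listed tuple has $d_2 \le 11$, while $d_2 = 4q_6^2 \ge 36$ since $q_6 \ge 3$. For $\mathfrak{G} = \{(1, 4p_5^{h_1} - 1, p_5)\}$, parity is incompatible: $4q_6^2$ is even but $4p_5^{h_1} - 1$ is odd. For $\mathfrak{F}$, membership would force the Lucas number $\mathcal{L}_{h + \varepsilon}$ to equal $(2q_6)^2$, a perfect square greater than $4$; by Cohn's result \cite{Coh65} (already quoted in the proof of Lemma \ref{lem3.7}) the only squares in the Lucas sequence are $\mathcal{L}_1 = 1$ and $\mathcal{L}_3 = 4$, so no such $h$ exists.

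The substantive step, and the sole reason Lemma \ref{lem3.9} was established beforehand, is the exclusion of $\mathfrak{H}_1$. Supposing $(d_1, 4q_6^2, p) \in \mathfrak{H}_1$, there would exist positive integers $s_0, t_0$ with
$$
d_1 s_0^2 + 4q_6^2 = p^{t_0} \qquad \text{and} \qquad 3 d_1 s_0^2 - 4q_6^2 = \pm 1.
$$
Taking $3$ times the first equation and subtracting the second eliminates $d_1 s_0^2$ to give $16 q_6^2 = 3 p^{t_0} \mp 1$, i.e.\ $16 q_6^2 - 3 p^{t_0} = \mp 1$, which contradicts Lemma \ref{lem3.9} directly. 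With all four exceptional families thus ruled out, Theorem \ref{thm3.4} yields the claimed uniqueness of positive integer solutions; the only real obstacle in the argument is $\mathfrak{H}_1$, and Lemma \ref{lem3.9} was tailor-made to eliminate it.
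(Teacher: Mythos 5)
Your proposal is correct and follows essentially the same route as the paper: apply Theorem \ref{thm3.4} with $\gamma = 1$, rule out $\mathfrak{E}$ and $\mathfrak{G}$ by inspection, and exclude $\mathfrak{H}_1$ via the elimination $16q_6^2 = 3p^{t_0} \pm 1$ together with Lemma \ref{lem3.9}, exactly as the paper does. The only divergence is the exclusion of $\mathfrak{F}$, where you reuse Cohn's theorem on squares in the Lucas sequence (as in Lemma \ref{lem3.7}) while the paper instead invokes the identity $4\mathcal{F}_h - \mathcal{F}_{h-2\varepsilon} = \mathcal{L}_{h+\varepsilon}$ and a parity argument using the oddness of $d_1$; both are valid, and yours is arguably the more uniform choice.
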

%%%%%%%%%%%%%%%%%%%%%%%%%
\begin{proof}
We will show 
$$(\gamma, d_1, d_2, p) = (1, d_1, 4q_6^2, p) \not \in \mathfrak{E}$$
and
$$(d_1, d_2, p) = (d_1, 4q_6^2, p) \not \in \mathfrak{F}\cup \mathfrak{G}\cup \mathfrak{H}_1$$
to use Theorem \ref{thm3.4} with $\gamma = 1$.
We see $(1, d_1, 4q_6^2, p) \not \in \mathfrak{E}$
and $(d_1, 4q_6^2, p) \not \in \mathfrak{G}$ easily.
Suppose $(d_1, 4q_6^2, p) \in \mathfrak{F}$.
There exists an integer $h$ greater than $1$ such that $\mathcal{F}_{h-2\varepsilon} = d_1$,
$\mathcal{L}_{h+\varepsilon} = 4q_6^2$, and $\mathcal{F}_h = p$,
where $\varepsilon = \pm 1$.
For any integer $h \ge 2$, we find
$$4\mathcal{F}_h - \mathcal{F}_{h-2\varepsilon} = \mathcal{L}_{h+\varepsilon}$$
(see \cite[Lemma 3]{BS01}).
Using this, we have
$$4p - d_1 = 4q_6^2.$$
Since $4p - d_1$ is odd, this is impossible.
Therefore, $(d_1, 4q_6^2, p) \not \in \mathfrak{F}$.
Next suppose $(d_1, 4q_6^2, p) \in \mathfrak{H}_1$.
Both $d_1s_0^2 + 4q_6^2 = p^{t_0}$ and $3d_1s_0^2 - 4q_6^2 = \pm 1$ hold for some positive integers $s_0$ and $t_0$.
Then, we have $16q_6^2 = 3p^{t_0} \pm 1$.
This is a contradiction with Lemma \ref{lem3.9}.
Therefore, $(d_1, 4q_6^2, p) \not \in \mathfrak{H}_1$.
The proof of Lemma \ref{lem3.10} is completed.
\end{proof}
%%%%%%%%%%%%%%%%%%%%%%%%%%
Next, we prove the following key lemma by using Lemma \ref{lem3.10}.
%%%%%%%%%%%%%%%%%%%%%%%%%%
\begin{lem}\label{lem3.11}
Let $p$ be a prime number such that $p \equiv 1 \bmod 4$ and $2^{p-1} \equiv 1 \bmod p^2$, 
$q_1$ a prime factor of $p - 2$ such that $q_1^{p-1} \not \equiv 1 \bmod p^2$,
$n$ a composite number, and 
$$\alpha := 2q_1 + \sqrt{4q_1^2 - p^n}.$$
Then, $\pm \alpha$ is not a $p_4$-th power in $\mathcal{O}_{\mathbb{Q}(\sqrt{4q_1^2 - p^n})}$ for any prime $p_4$ dividing $n$.
\end{lem}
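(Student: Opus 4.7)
The plan is to follow the template of Lemma \ref{lem3.8}, with an expanded case analysis because the real part of $\alpha$ is now $2q_1$ rather than $q_1$. A short parity argument (based on the fact that $\eta^2 + \bar\eta^2 = \alpha + \bar\alpha = 4q_1$ would force $p^{n/2} = 2(u^2 - q_1)$ to be even) rules out $p_4 = 2$; for odd $p_4$ the cases $\pm\alpha$ reduce to showing $\alpha$ is not a $p_4$th power. Since $p \equiv 1 \bmod 4$ and $n$ is composite, $4q_1^2 - p^n \equiv 3 \bmod 4$, so $\mathcal{O}_{\mathbb{Q}(\sqrt{4q_1^2 - p^n})} = \mathbb{Z}[\sqrt{d_0}]$ with $d_0$ the signed square-free part; write $4q_1^2 - p^n = c^2 d_0$ with $c$ positive and odd, and set $d_1 := -d_0 > 0$, which is odd (so Lemma \ref{lem3.10} will apply). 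Assuming $\alpha = (u + v\sqrt{d_0})^{p_4}$, comparing real parts forces $u \mid 2q_1$, hence $u \in \{\pm 1, \pm 2, \pm q_1, \pm 2q_1\}$, and the norm equation reads $u^2 + v^2 d_1 = p^m$ with $m := n/p_4 \geq 2$.

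Three of the four $u$-values are handled by the techniques already in Lemma \ref{lem3.8}. For $u = \pm 2q_1$, the norm equation together with $c^2 d_1 + 4q_1^2 = p^n$ yields two distinct positive integer solutions $(|v|, m)$ and $(c, n)$ of $d_1 x^2 + 4q_1^2 = p^y$, contradicting Lemma \ref{lem3.10}. For $u = \pm 1$, substituting $v^2 d_0 = 1 - p^m$ into the real-part identity and reducing modulo $p^2$ (using $m \geq 2$) gives $\pm 2q_1 \equiv 2^{p_4-1} \bmod p^2$; raising to the $(p-1)$th power and invoking the Wieferich hypothesis $2^{p-1} \equiv 1 \bmod p^2$ yields $q_1^{p-1} \equiv 1 \bmod p^2$, contradicting the choice of $q_1$. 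The case $u = \pm 2$ is entirely analogous, producing $\pm q_1 \equiv 2^{2(p_4-1)} \bmod p^2$ and the same contradiction.

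The main obstacle is the remaining case $u = \pm q_1$, because here the Wieferich-style substitution only yields $(q_1^{p-1})^{p_4-1} \equiv 1 \bmod p^2$, which is consistent with $q_1^{p-1} \not\equiv 1 \bmod p^2$ whenever $p \mid p_4 - 1$. To close this case my plan is a parity argument on traces. With $\eta := \pm q_1 + v\sqrt{d_0}$, one has $\eta\bar\eta = q_1^2 + v^2 d_1 = p^m$, and $\alpha + \bar\alpha = 4q_1 = \pm(\eta^{p_4} + \bar\eta^{p_4})$ expands via the binomial theorem to
\[
\sum_{j=0}^{(p_4-1)/2} \binom{p_4}{2j}\, q_1^{p_4-2j}\, (q_1^2 - p^m)^j \;=\; \pm 2q_1.
\]
Since $p$ and $q_1$ are odd, $q_1^2 - p^m$ is even, so every term with $j \geq 1$ on the left is even while the $j = 0$ term equals $q_1^{p_4} \equiv 1 \bmod 2$; thus the left side is odd while the right side is even, a contradiction. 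This exhausts the cases and shows that $\pm\alpha$ cannot be a $p_4$th power in $\mathcal{O}_{\mathbb{Q}(\sqrt{4q_1^2-p^n})}$.
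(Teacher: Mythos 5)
Your proposal is correct and follows essentially the same route as the paper: the same reduction to $u \in \{\pm 1, \pm 2, \pm q_1, \pm 2q_1\}$ via the real-part identity, the same appeal to Lemma \ref{lem3.10} for $u = \pm 2q_1$, the same Wieferich-style congruence $q_1 \equiv \pm 2^{p_4-2}u^{p_4} \bmod p^2$ for $u = \pm 1, \pm 2$, and the same parity contradiction (the $j=0$ term odd, all $j \ge 1$ terms even since $v^2 d_0 = q_1^2 - p^{n/p_4}$ is even, against an even right-hand side) for $u = \pm q_1$. The only cosmetic difference is the case $p_4 = 2$, where the paper compares imaginary parts to get $\pm c = 2uv$ with $c$ odd, while you compare trace and norm to make $p^{n/2}$ even; both are one-line parity arguments.
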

%%%%%%%%%%%%%%%%%%%%%%%%%%
\begin{proof}
Suppose $\pm \alpha$ is a $p_4$-th power in $\mathcal{O}_{\mathbb{Q}(\sqrt{4q_1^2 - p^n})}$.
Since $p \equiv 1 \bmod 4$ holds, we see $4q_1^2 - p^n \equiv 3 \bmod 4$.
Then $\mathcal{O}_{\mathbb{Q}(\sqrt{4q_1^2 - p^n})} = \mathbb{Z}[\sqrt{d_0}]$, where $d_0$ is the square-free part of $4q_1^2 - p^n$. 
We can write 
\begin{equation}
\pm \alpha = (u + v\sqrt{d_0})^{p_4} \label{8}
\end{equation}
for some integers $u$ and $v$.
First, we treat the case where $p_4 = 2$.
Substituting $p_4 = 2$ in equation (\ref{8}), we have
$$\pm \alpha = (u + v\sqrt{d_0})^2 = (u^2 + v^2d_0) + 2uv\sqrt{d_0}.$$
On the other hand, we can write 
$$4q_1^2 - p^n = c^2d_0$$
for some positive odd integer $c$.
Using this expression, we have 
$$\pm \alpha = \pm 2q_1 \pm c\sqrt{d_0}.$$
Comparing the imaginary parts of these equation,
$$\pm c = 2uv.$$
Since $c$ is odd, this is impossible.
Then, $\pm \alpha$ is not a square in $\mathcal{O}_{\mathbb{Q}(\sqrt{4q_1^2 - p^n})}$.
Next, we treat the case where $p_4 \ge 3$.
It is sufficient to prove that $\alpha$ is not a $p_4$-th power in $\mathcal{O}_{\mathbb{Q}(\sqrt{4q_1^2 - p^n})}$.
Expanding the right side of the equation $\alpha = (u + v\sqrt{d_0})^{p_4}$, we have
\begin{equation}
 \begin{split}
2q_1 + \sqrt{4q_1^2 - p^n} &= \biggl\{\sum_{j=0}^{\frac{p_4-1}{2}} \binom{p_4}{2j} u^{p_4-2j}(v^2d_0)^j\biggr\} + w\sqrt{d_0} \notag \\
                                                 &= u \biggl\{\sum_{j=0}^{\frac{p_4-1}{2}} \binom{p_4}{2j} u^{p_4-2j-1}(v^2d_0)^j\biggr\} + w\sqrt{d_0} \notag \\                         
 \end{split}
\end{equation}                       
for some integer $w$.
Comparing the real parts of this equation,
\begin{equation}
2q_1 = u \sum_{j=0}^{\frac{p_4-1}{2}} \binom{p_4}{2j} u^{p_4-2j-1}(v^2d_0)^j. \label{9}
\end{equation}
Then, $u = \pm 1$, $\pm 2$, $\pm q_1$, $\pm 2q_1$.
Suppose $u = \pm 1$, $\pm 2$.
Taking the norm of $\alpha = (u + v\sqrt{d_0})^{p_4}$, we have $p^n = (u^2 - v^2d_0)^{p_4}$.
Then, $p^{n / p_4} = u^2 - v^2d_0$,
that is, 
$$v^2d_0 = u^2 - p^{\frac{n}{p_4}}.$$
Substituting this in equation (\ref{9}), we obtain
$$2q_1 = u\sum_{j=0}^{\frac{p_4-1}{2}} \binom{p_4}{2j} u^{p_4-2j-1}(u^2 - p^{\frac{n}{p_4}})^j.$$
Since $n$ is a composite number, we have $n / p_4 \ge 2$.
Then, $p^{n / p_4} \equiv 0 \bmod p^2$.
Using this, we see
\begin{equation}
 \begin{split}
u\sum_{j=0}^{\frac{p_4-1}{2}} \binom{p_4}{2j} u^{p_4-2j-1}(u^2 - p^{\frac{n}{p_4}})^j &\equiv \sum_{j=0}^{\frac{p_4-1}{2}} \binom{p_4}{2j} u^{p_4-2j}u^{2j} \notag \\
                                                                                          &\equiv u^{p_4} \sum_{j=0}^{\frac{p_4-1}{2}} \binom{p_4}{2j} \equiv 2^{p_4-1}u^{p_4} \bmod p^2. \notag \\
 \end{split}
\end{equation}                       
Since $p$ is odd, we have
$$q_1 \equiv 2^{p_4-2}u^{p_4} \bmod p^2.$$
Combining this and the assumption $2^{p-1} \equiv 1 \bmod p^2$, we see
\begin{equation}
 \begin{split}
q_1^{p-1} \equiv (2^{p_4-2}u^{p_4})^{p-1} &\equiv (2^{p_4 - 2})^{p-1}(u^{p_4})^{p-1} \notag \\ 
                   &\equiv (2^{p-1})^{p_4 - 2}(u^{p-1})^{p_4} \equiv (u^{p-1})^{p_4} \bmod p^2. \notag \\
 \end{split}
\end{equation}                       
When $u = \pm 1$, we have $u^{p-1} = 1$.
When $u = \pm 2$, we have $u^{p-1} = 2^{p-1} \equiv 1 \bmod p^2$.
Then,
$$q_1^{p-1} \equiv (u^{p-1})^{p_4} \equiv 1 \bmod p^2.$$
This is a contradiction.
Therefore, it must be $u = \pm q_1$, $\pm 2q_1$.
Suppose $u = \pm q_1$.
Substituting this in equation (\ref{9}), we obtain
$$2q_1 = \pm q_1 \sum_{j=0}^{\frac{p_4-1}{2}} \binom{p_4}{2j} q_1^{p_4-2j-1}(v^2d_0)^j.$$
Then,
\begin{equation}
\pm 2 = \sum_{j=0}^{\frac{p_4-1}{2}} \binom{p_4}{2j} q_1^{p_4-2j-1}(v^2d_0)^j = q_1^{p_4-1} + \sum_{j=1}^{\frac{p_4-1}{2}} \binom{p_4}{2j} q_1^{p_4-2j-1}(v^2d_0)^j. \label{10}
\end{equation}                       
Taking the norm of $\alpha = (\pm q_1 + v\sqrt{d_0})^{p_4}$, we have 
$$v^2d_0 = q_1^2 - p^{\frac{n}{p_4}}.$$
Since $q_1^2 - p^{n / p_4}$ is even and $d_0$ is odd, we see that $v$ is even.
Then, the right side of equation (\ref{10}) is odd.
This is a contradiction.
Therefore, it must be $u = \pm 2q_1$.
Taking the norm of $\alpha = (\pm 2q_1 + v\sqrt{d_0})^{p_4}$, we obtain
$$v^2d_0 = 4q_1^2 - p^{\frac{n}{p_4}}.$$
On the other hand,
$$4q_1^2 - p^n = c^2d_0.$$
These implies that both $(x, y) = (c, n)$ and $(x, y) = ( |v|, n / p_4)$ are positive integer solutions of the equation $-d_0x^2 + 4q_1^2 = p^y$.
This is a contradiction by Lemma \ref{lem3.10}.
The proof of Lemma \ref{lem3.11} is completed.
\end{proof}
%%%%%%%%%%%%%%%%%%%%%%%%%%
Finally, we show Theorem \ref{thm3.2} (2) (ii) by using Lemma \ref{lem3.11}.\\
%%%%%%%%%%%%%%%%%%%%%%%%%%
~\\
{\bf Proof of Theorem \ref{thm3.2} (2) (ii).} \ Since $\gcd(p, 4q_1^2 - p^n) = 1$ and $p \nmid \alpha$ hold, 
the prime number $p$ splits in $\mathbb{Q}(\sqrt{4q_1^2 - p^n})$.
Note that ideals $(\alpha)$ and $(\overline{\alpha})$ are coprime,
where $\overline{\alpha}$ is the complex conjugate of $\alpha$.
Then, 
$$(\alpha) = \wp^n,$$
where $\wp$ is the prime ideal of $\mathcal{O}_{\mathbb{Q}(\sqrt{4q_1^2 - p^n})}$ over $p$.
Let $s_1$ be the order of the ideal class containing $\wp$.
We can write $n = s_1t_1$ for some integer $t_1$.
Since $\wp^{s_1}$ is principal, there exists some element $\eta \in \mathcal{O}_{\mathbb{Q}(\sqrt{4q_1^2 - p^n})}$ such that $\wp^{s_1} = (\eta)$.
Then,
$$(\alpha) = (\wp^{s_1})^{t_1} = (\eta)^{t_1} = (\eta^{t_1}).$$
Since $d_0 \equiv -1 \bmod 4$ holds, $\mathbb{Q}(\sqrt{4q_1^2 - p^n}) \neq \mathbb{Q}(\sqrt{-3})$.
From the assumption $\mathbb{Q}(\sqrt{4q_1^2 - p^n}) \neq \mathbb{Q}(\sqrt{-1})$, we have
$$\mathbb{Q}(\sqrt{4q_1^2 - p^n}) \neq \mathbb{Q}(\sqrt{-1}), \ \mathbb{Q}(\sqrt{-3}).$$
Then,
$$\pm \alpha = \eta^{t_1}.$$
We obtain $t_1 = 1$ by Lemma \ref{lem3.11}.
Therefore, $s_1 = n$.
The proof of Theorem \ref{thm3.2} (2) (ii) is completed.
%%%%%%%%%%%%%%%%%%%%%%%%%%
\begin{rem}
We give a remark on Theorem \ref{thm3.2} (2).
We see how often the equality 
$\mathbb{Q}(\sqrt{4 - p^n}) = \mathbb{Q}(\sqrt{-1})$ (resp. $\mathbb{Q}(\sqrt{4q_1^2 - p^n}) = \mathbb{Q}(\sqrt{-1})$)
occurs as $n$ runs over positive integers for a fixed prime number $p$ (resp. for fixed prime numbers $p$ and $q_1$).
The number of positive integer solutions $(x, y)$ of the equation
$$x^2 + 4 = p^y$$
is at most one except for $p = 5$ (see \cite[Lemma 3]{It11}).
Then, for a fixed $p$, the number of positive integers $n$ such that $\mathbb{Q}(\sqrt{4 - p^n}) = \mathbb{Q}(\sqrt{-1})$ is at most one except for $p = 5$.
By Lemma \ref{lem3.10} of this paper, 
the number of positive integer solutions $(x, y)$ of the equation 
$$x^2 + 4q_1^2 = p^y$$
is at most one.
Then, for fixed odd prime numbers $p$ and $q_1$, the number of positive integers $n$ such that $\mathbb{Q}(\sqrt{4q_1^2 - p^n}) = \mathbb{Q}(\sqrt{-1})$ is at most one.
\end{rem}
%%%%%%%%%%%%%%%%%%%%%%%%%%%%%%%%%%%%%%%%%
%%%%%%%%%%%%%%%%%%%%%%%%%%%%%%%%%%%%%%%%%
%%%%%%%%%%%%%%%%%%%%%%%%%%%%%%%%%%%%%%%%%
\subsection*{Acknowledgements}
The author would like to express sincere gratitude to Professor Masato Kurihara and Doctor Satoshi Fujii for suggesting this subject and for valuable advice.
She would like to thank Professor Manabu Ozaki for answering her questions.
She would also like to thank Professor Hisao Taya and Doctor Takayuki Morisawa for helpful discussion.
Further, she would like to express my thanks 
to Professor Akio Tamagawa, Professor Yasushi Mizusawa, Professor Tsuyoshi Itoh, Professor Dongho Byeon, and 
Doctor Filippo Alberto Edoardo Nuccio Mortarino Majno di Capriglio for useful comments.
Finally, she wishes to be grateful to Professor Kohji Matsumoto for continuous encouragement.
%%%%%%%%%%%%%%%%%%%%%%%%%%%%%%%%%%%%%%%%%

%%%%%%%%%%%%%%%%%%%%%%%%%%%%%%%%%%%%%%%%
~\\
Graduate School of Mathematics\\
Nagoya University\\
Chikusa-ku, Nagoya City\\
Aichi 464-8602, Japan\\
E-mail: m07004a@math.nagoya-u.ac.jp
%%%%%%%%%%%%%%%%%%%%%%%%%%%%%%%%%%%%%%%%
\end{document}